\newcommand{\Osp}{\mathcal{OS}}
\newcommand{\cat}[1]{\textup{\textbf{{#1}}}}
\newcommand{\pbc}{\nu}
\newcommand{\shift}{\Gamma}
\newcommand{\fibrant}{radiant }
\DeclareMathOperator{\THH}{\textup{THH}}
\DeclareMathOperator{\TR}{\textup{TR}}
\providecommand{\Ex}{\cE\mathrm{x}}
\DeclareMathOperator{\barsmash}{\overline{\wedge}}
\newcommand{\Sph}{\mathbb{S}}
\newcommand{\sh}[1]{{\ensuremath{\hspace{1mm}\makebox[-1mm]{$\langle$}\makebox[0mm]{$\langle$}\hspace{1mm}{#1}\makebox[1mm]{$\rangle$}\makebox[0mm]{$\rangle$}}}}
\newcommand{\xto}{\xrightarrow}
\newcommand{\odots}[1]{\odot}
\newcommand{\bcr}[3]{\left[\begin{gathered}{#3}\xleftarrow{#2}{#1}\end{gathered}\right]}
\newcommand{\simar}{\overset\sim\to}
\newcommand{\R}{\mathbb{R}}
\newcommand{\ti}{\widetilde}
\newcommand{\ra}{\longrightarrow}
\newcommand{\mc}{\mathcal}
\subjclass[2020]{18D30, 18M05, 18N10, 55P42, 55R70   	}
\keywords{parametrized spectra, coherence, rigidity, deformable functors}
\title[Smash powers and traces on parametrized spectra]{Smash powers and traces on parametrized spectra:\\An application of rigidity}
\author{Cary Malkiewich} 
\email{malkiewich@math.binghamton.edu}
\address{Binghamton University, PO Box 6000, Binghamton, NY 13902}
\author{Kate Ponto}
\email{kate.ponto@uky.edu}
\address{University of Kentucky, 719 Patterson Office Tower, Lexington, KY 40506}
\date{\today}
\begin{document}

 \maketitle

\begin{abstract}
	The bicategory of parametrized spectra has a remarkably rich structure. We can take traces in this bicategory, giving classical invariants that count fixed points. We can also take $C_n$-equivariant external smash powers and equivariant traces, which give significant generalizations of the classical invariants that count periodic points.
	
	Unfortunately, the existence of these smash powers and traces for parametrized spectra depends on technical statements about the bicategory that can be difficult to verify directly, especially if one wants the construction to have a direct geometric interpretation. In this paper, we demonstrate the effectiveness of two tools -- rigidity and deformable functors -- by using them to establish formal structures on this bicategory directly from point-set level data.
\end{abstract}
 
 \setcounter{tocdepth}{1}
 \tableofcontents

\section{Introduction}

The Lefschetz fixed point theorem shows that for a continuous map $f\colon X\to X$, the signed count of the fixed points of $f$ can be given by an algebraic construction, namely the alternating sum of traces on rational homology. Later Dold observed in \cite{dold:index} that that this signed count is also the trace of $\Sigma^\infty_+ f$ in the stable homotopy category. In other words, the fixed points of $f$ may be measured by taking an abstract trace in a symmetric monoidal category \cite{dp}.

This theorem has a generalization due to Wecken \cite{wecken}, which gives a more sensitive fixed-point invariant called the Reidemeister trace $R(f)$. In her thesis, the second author showed that this more sensitive invariant is also a trace, but the trace is carried out in a more sophisticated setting: the bicategory of parametrized spectra \cite{p:thesis}. The result is a version of $R(f)$ that lives in topological Hochschild homology ($\THH$).

In \cite{mp1}, the authors show how to pass to an invariant that counts periodic points in families and lives in topological restriction homology ($\TR$). This invariant is expressed as a trace in the bicategory of \emph{equivariant} parametrized spectra. However, in order to prove that it has the desired behavior, one needs to show that bicategories with shadow have a coherence theorem \cite[Theorem 4.1]{mp1}, and that the bicategory of parametrized spectra has tensor powers, base change objects, and change-of-group functors that interact with the trace in a well-behaved way \cite[Theorem 8.3]{mp1}.

The authors originally sought to give proofs for Theorems 4.1 and 8.3 from \cite{mp1} in a very formal way in any symmetric monoidal bifibration \cite{mp2}. This approach turned out to be very challenging to develop and verify, and so the authors ultimately changed their approach. Theorem 4.1 from \cite{mp1} is now proven in \cite{mp3}, while Theorem 8.3 from \cite{mp1} is now proven in the cases required by \cite{mp1} by Theorems \ref{intro:thm}--\ref{intro:functors} below. The current paper is the last step in proving the main results of \cite{mp1}.

The first part of the structure needed for \cite[Theorem 8.3]{mp1} is called a ``shadowed $n$-Fuller structure.'' In this paper, we use two tools -- rigidity and deformable functors -- and work in the approach to parametrized spectra from \cite{convenient} to give a detailed construction of this structure.
\begin{thm}[\S\ref{sec:shadow_fuller}]\label{intro:thm}
	The bicategory of parametrized spectra $\Ex$ has a shadowed $n$-Fuller structure.
\end{thm}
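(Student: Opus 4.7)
The plan is to construct the $n$-Fuller structure on $\Ex$ by exhibiting it as the image, under a rigidity argument, of point-set data in the convenient model of \cite{convenient}, with the homotopically meaningful operations produced via deformable functor machinery. First I would work inside a point-set bicategory $\Ex_0$ of parametrized orthogonal spectra in which the $n$-fold external and fiberwise smash products, the relevant base-change functors, and the shadow admit strict functorial models. In this setting I would write down the candidate $n$-Fuller data: the iterated smash-power functors, the cyclic symmetries that compare these powers with the shadow, and the prospective associator, unitor, and compatibility transformations.

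The next step is to promote this point-set data to the homotopy bicategory $\Ex$. The $n$-fold smash generally fails to preserve stable equivalences on all of $\Ex_0$, so it does not descend directly. Following the deformable-functor framework I would identify subcategories of cofibrant and \fibrant parametrized spectra, refined so as to accommodate the cyclic $C_n$-actions required for the shadow, on which all of the relevant operations are homotopical. Verifying that these subcategories form deformation retracts of $\Ex_0$ in the appropriate sense then guarantees that the smash powers and their symmetries yield genuine bicategorical functors on $\Ex$.

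With the underlying functorial data pinned down, coherence is the remaining and largest task. An $n$-Fuller structure is a package of many natural isomorphisms satisfying many diagrams, and checking each relation directly on point-set representatives would be prohibitive. Instead I would invoke a rigidity theorem: once the underlying functors are determined on a generating sub-bicategory and their compatibility with the shadow is fixed, the coherence isomorphisms are forced, unique, and automatically satisfy the $n$-Fuller axioms. This is the step that replaces a large diagram chase with an abstract uniqueness statement, and it is why the approach is cleaner than a direct verification.

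The main obstacle will be the deformability step. The technical difficulty is to arrange a \emph{single} subcategory of point-set objects that simultaneously supports the $n$-fold smash, the shadow, the cyclic symmetries, and the required base-change functors in a homotopically meaningful way, with enough flexibility that composites and iterations stay inside it. Once this arrangement is in place, rigidity should deliver the $n$-Fuller structure on $\Ex$ without further combinatorial bookkeeping.
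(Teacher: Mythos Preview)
Your broad architecture---point-set construction, deformable functors to descend, rigidity for coherence---matches the paper, but the order and precise content of the rigidity step are off in a way that matters.

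In the paper, rigidity is applied \emph{first}, entirely at the point-set level, and it is a very concrete statement: the action of a multi-span $(X_1,\ldots,X_n)\mapsto f_!(g_1,\ldots,g_n)^*(X_1\barsmash\cdots\barsmash X_n)$ has no nontrivial natural automorphisms whenever $(f,g_1,\ldots,g_n)$ is injective. Every piece of the $n$-Fuller data---the pseudofunctor $\boxtimes$, the twisting 1-cells $T_{(A_i)}$, the isomorphisms $m_\boxtimes$, $i_\boxtimes$, $\vartheta$, $\tau$---is exhibited as the action of such a rigid multi-span, and each coherence diagram commutes because all its vertices are instances of a single rigid functor, so any two isomorphisms between them coincide. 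This is not a ``generating sub-bicategory'' argument; it is a pointwise uniqueness-of-automorphism statement for specific composites of $\barsmash$, $f^*$, $f_!$. Your formulation of rigidity is too vague to carry this load.

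Second, the passage to the homotopy category does not require rigidity again. The deformable-functor machinery (specifically, the fact that a commuting square of coherently right-deformable functors descends to a commuting square of derived functors) transfers the already-established point-set coherences to $\Ex$ automatically. The work here is not finding a single subcategory that supports everything---the paper explicitly uses \emph{different} enlargements of the level $h$-fibrant spectra for different composites (e.g.\ adding units $U_A$ in one place, twisting objects $T_{(A_i)}$ in another), and stresses that these need not be compatible with each other. The actual obstacle is that $U_A$ and $T_{(A_i)}$ are not level $h$-fibrant, so one must check by hand that $\odot$ still preserves equivalences when one input is such an object; this is done via explicit fibration arguments. Your anticipated ``main obstacle'' of a single universal subcategory is therefore both unnecessary and not what the paper does.

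Finally, a small factual correction: the external smash product $\barsmash$ preserves all stable equivalences on freely $f$-cofibrant spectra, so it is not the source of the deformability problem; the pullback $f^*$ is.
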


This extends earlier results from \cite{ms,s:framed,PS:indexed} showing that $\Ex$ is a bicategory with shadow.

We also generalize \cref{intro:thm} in three directions. The first direction involves base change objects in the bicategory $\Ex$ that encode maps of spaces $f\colon A \to B$. Taking the product in $\Ex$ with these base change objects has the effect of taking pullback or pushforward along $f$. We show that these objects can be chosen to be appropriately compatible with the above shadowed $n$-Fuller structure:
\begin{prop}[\S\ref{sec:bc}]\label{intro:bc}
	The bicategory $\Ex$ has a compatible system of base change 1-cells.
\end{prop}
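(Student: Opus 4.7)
The plan is to first fix, in the convenient model for $\Ex$ from \cite{convenient}, a specific pointset choice of base-change 1-cell $\,_B f_A \in \Ex(A,B)$ (and dually $\,_A f^B$) for each map of unbased spaces $f\colon A\to B$. The natural candidate is built from the graph of $f$ via a parametrized suspension spectrum, which is well known to implement pushforward (respectively pullback) after deriving. With these choices in place, the identification of $\,_A(\mathrm{id}_A)_A$ with the unit 1-cell at $A$ and of $\,_C g_B \odot \,_B f_A$ with $\,_C (gf)_A$ for composable $A\xrightarrow{f}B\xrightarrow{g}C$ follows at the pointset level from a direct manipulation of parametrized smash products, and the resulting comparison 2-cells are weak equivalences by a standard cofibrancy argument inside the deformable-functor framework of \cite{convenient}.

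Next I would promote this data to the compatibility required by the $n$-Fuller structure of Theorem~\ref{intro:thm}. Concretely, one needs that the shadow, each of the $n$-fold tensor products, and the associated symmetry and coherence 2-cells commute with the new base-change 1-cells in a way that is strictly functorial in $f$. At the pointset level each of these compatibilities is an elementary statement about parametrized spaces; the substantive content lies in making them hold strictly, simultaneously, and coherently across all of the structural morphisms produced in the proof of Theorem~\ref{intro:thm}.

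The main obstacle is precisely this coherence, and it is where rigidity enters. The base-change 1-cells and their comparison 2-cells assemble into a pseudo-morphism of the relevant coherence structure; rigidity then rectifies it to a strict morphism, giving the desired compatible system. The hardest step will be verifying, uniformly across all the structural maps appearing in the $n$-Fuller axioms, that these comparison 2-cells arise from maps of cofibrant pointset models that are weak equivalences, so that the rigidity hypothesis applies. The deformable-functor setup should handle these homotopical checks without any individually difficult argument, but the bookkeeping across all of the structural morphisms of an $n$-Fuller structure will be substantial.
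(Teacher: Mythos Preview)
Your two-stage outline (point-set construction, then descent via deformable functors) matches the paper, but you misunderstand how rigidity is used and you misidentify the hard step.

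Rigidity here is not a rectification device that turns a pseudo-morphism into a strict one. The content of Theorem~\ref{rigidity} is that the relevant multi-span functors have \emph{no nontrivial automorphisms}. At the point-set level the comparison 2-cells are honest isomorphisms (not merely weak equivalences), and every coherence diagram commutes simply because each vertex is isomorphic to a rigid functor, so the composite around any loop is forced to be the identity. No cofibrancy argument and no weak-equivalence check is needed for this layer; see the proofs of Lemmas~\ref{base_change_functor}--\ref{base_change:equality}.

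The actual difficulty lives entirely in the descent step, and it is different from what you describe. The base-change 1-cells $\bcr{A}{f}{B}$ are \emph{not} level $h$-fibrant, so the default radiant subcategory for $\odot$ does not contain them, and the lists of functors in the coherence diagrams are not a priori coherently right-deformable. One must expand the radiant objects to include pairs containing base-change 1-cells and prove directly that $\odot$ still preserves equivalences on the enlarged class (Lemma~\ref{base_change_preserves_equivalences}, resting on the concrete fibration argument of Lemma~\ref{convenient:preserve_equivalences}). Only after that expansion does Corollary~\ref{rd_descends_3} transport the point-set coherences to $\Ex$. Your proposal neither anticipates this fibrancy failure nor supplies the argument that repairs it.
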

This is related to the notion of a ``framed bicategory'' developed in \cite{s:framed}.

The second direction involves the equivariant and fiberwise versions of the bicategory $\Ex$.
\begin{thm}[\S\ref{sec:GExEx_B}]\label{intro:eq}
	For any finite group $G$ and base space $B$, the bicategories of
	\begin{itemize}
		\item equivariant parametrized spectra $G\Ex$,
		\item fiberwise parametrized spectra $\Ex_B$, and 
		\item equivariant fiberwise parametrized spectra $G\Ex_B$,
	\end{itemize}
	also have shadowed $n$-Fuller structures and compatible systems of base change 1-cells.
\end{thm}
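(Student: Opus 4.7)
The plan is to abstract the proof of Theorem \ref{intro:thm} so that it runs uniformly in any ambient setting admitting the two tools used in Section \ref{sec:shadow_fuller}, and then to verify that each of the three bicategories $G\Ex$, $\Ex_B$, and $G\Ex_B$ meets those hypotheses. Concretely, the proof of \cref{intro:thm} uses only (i) a model structure on parametrized spectra in which the external smash, pullback, and pushforward assemble into deformable functors, and (ii) rigidity statements that reduce coherence for iterated derived operations to a short list of comparisons between concrete 1-cells. Once these inputs are produced in each of the new settings, \cref{intro:thm} and \cref{intro:bc} go through with only notational changes.

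First I would handle $\Ex_B$. Parametrized spectra fiberwise over $B$ have 0-cells given by spaces over $B$, so the whole bicategory embeds into $\Ex$ as the slice over the object $B$. The external smash, pullback, and pushforward all descend to operations over $B$ since the external smash of two spaces over $B$ is a space over $B \times B$ and composition with the diagonal of $B$ keeps us in the slice. The cofibrant and \fibrant replacements from \cite{convenient} can be chosen to preserve the map to $B$, so deformability and rigidity pass to this slice, yielding the $n$-Fuller structure and compatible base-change 1-cells for $\Ex_B$.

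Next I would handle $G\Ex$. The construction of parametrized spectra in \cite{convenient} is intrinsically functorial under group actions: every functor used in the main theorem is $G$-equivariant on the point-set level, and the standard equivariant model structure makes the same classes of objects cofibrant and \fibrant that were used in the non-equivariant case. Rigidity arguments go through because they concern the existence and essential uniqueness of coherence maps, statements preserved by imposing a compatible $G$-action. The bicategory $G\Ex_B$ is then obtained by combining the two previous cases, working with $G$-equivariant parametrized spectra whose base is a $G$-space equipped with a $G$-map to $B$.

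The main obstacle is verifying that the \fibrancy property used to make the external smash deformable survives equivariantly and fiberwise, in particular that \fibrant objects are preserved by external smash and by pullback along the relevant diagonals in each of the new categories. Once these preservation statements are in hand, the rigidity machinery of Section \ref{sec:shadow_fuller} produces the $n$-Fuller structure and the base-change 1-cells formally, so no new coherence diagrams need to be chased.
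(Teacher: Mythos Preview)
Your overall strategy matches the paper's: verify that rigidity and deformability hold in each new setting, then rerun the arguments of Sections~\ref{sec:point_set} and~\ref{apply_parameterized_spectra}. However, two points are handled too loosely and constitute genuine gaps.

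First, for $G\Ex$ you assert that rigidity is ``preserved by imposing a compatible $G$-action,'' but this is not literally true. As the paper notes, in the equivariant setting the action of a rigid multi-span can have nontrivial automorphisms as a functor on $G\Osp^c$; uniqueness only holds once you demand naturality with respect to all \emph{non-equivariant} maps (\cite[Thm~7.2.4]{convenient}). This weaker rigidity is still enough to pick coherence isomorphisms and verify diagrams, but you have to say so explicitly rather than treating the equivariant case as a formal consequence of the non-equivariant one.

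Second, and more seriously, for $\Ex_B$ you treat the fiberwise smash $\barsmash_B$ as if deformability were merely a matter of checking that \fibrant objects are preserved. In fact $\barsmash_B$ is $\barsmash$ followed by a pullback along the diagonal $B \to B^n$, so it does \emph{not} preserve equivalences between arbitrary freely $f$-cofibrant spectra---only between those whose levels are Serre fibrations over $B$. The paper must therefore verify this fibration condition for every \fibrant object that $\barsmash_B$ touches, including the units, twisting 1-cells, and base-change 1-cells; this requires new technical lemmas (e.g.\ the fibration statement \cref{weird_fibration} and the extension lemma \cref{fib_source_of_confusion}) that do not reduce to the non-fiberwise case. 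Your final paragraph identifies this as ``the main obstacle'' but then assumes it away; the actual content of the fiberwise proof lies precisely in discharging it.
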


Finally,  we extend \cref{intro:thm} to encompass two change-of-group functors on equivariant parametrized spectra. Let $G$ be a finite group, $H \leq G$ a subgroup, and $WH = NH/H$ the Weyl group of $H$.
\begin{thm}[\S\ref{sec:functors}]\label{intro:functors}
	The geometric fixed points functor and the forgetful functor induce maps of bicategories
	\[ \Phi^H\colon G\Ex \to WH\Ex, \qquad \iota_H^*\colon G\Ex \to H\Ex \]
	and in the fiberwise case
	\[ \Phi^H\colon G\Ex_B \to WH\Ex_B, \qquad \iota_H^*\colon G\Ex_B \to H\Ex_B, \]
	that are compatible with the shadow and the base change objects.
\end{thm}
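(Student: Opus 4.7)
The plan is to combine the rigidity and deformable-functor framework developed earlier in the paper with the standard point-set constructions of $\iota_H^*$ and $\Phi^H$, so that the bulk of the coherence is handled formally rather than by hand. I would proceed in three stages: (i) construct the functors on the underlying point-set categories of parametrized spectra and verify that they preserve the relevant product, unit, external smash, and base-change structures on subcategories of sufficiently well-behaved objects; (ii) check that both functors are deformable with respect to the classes of \fibrant objects used to present $G\Ex$, $H\Ex$, $WH\Ex$, and their fiberwise variants; and (iii) invoke the rigidity theorem to promote the resulting homotopy-coherent data to genuine morphisms of bicategories with shadow that are compatible with the base-change 1-cells.

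For $\iota_H^*$, stage (i) is essentially automatic: the underlying point-set functor preserves fiberwise smash products, units, external smashes, and the distinguished base-change objects strictly, since these constructions make no reference to the ambient group. For $\Phi^H$, the classical formula $\Phi^H(X) = (X \wedge \widetilde{E\mathcal{P}})^H$ yields canonical comparison maps with the external smash and with the unit; these are isomorphisms once one restricts to spectra that are cofibrant in an appropriate fiberwise sense. In both cases, compatibility with the system of base-change 1-cells from \cref{intro:bc} amounts to showing that our chosen point-set models of $f_!$ and $f^*$ are preserved strictly by $\iota_H^*$ and, for $\Phi^H$, coherently up to natural isomorphism; this holds essentially by construction, since the models are built from inputs that are themselves compatible with both restriction and fixed points.

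Stage (ii) asks that each functor send \fibrant objects to \fibrant objects after the appropriate cofibrant approximation. For $\iota_H^*$ this is immediate from the way \fibrancy is defined in the equivariant setting, while for $\Phi^H$ it reduces to the familiar fact that smashing with $\widetilde{E\mathcal{P}}$ and taking categorical fixed points behave well on genuinely cofibrant equivariant parametrized spectra. Stage (iii) then applies: rigidity implies that the bicategory structure (composition, associators, unitors, shadow) is determined up to a contractible choice by the underlying point-set symmetric monoidal and trace data, so the lax compatibilities we have on good objects automatically assemble into bicategory morphisms preserving the shadow and the base-change 1-cells, and the coherence diagrams are satisfied without further work. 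The main obstacle is stage (ii) for $\Phi^H$ in the fiberwise setting: the interaction between smashing with $\widetilde{E\mathcal{P}}$, passing to $H$-fixed points, and the fiberwise \fibrancy condition is delicate, and one must verify that all three operations can be arranged to commute on a subcategory large enough for rigidity to bite. Once this is verified, the equivariant fiberwise bicategory $G\Ex_B$ is handled by combining the equivariant and fiberwise arguments, and \cref{intro:functors} follows.
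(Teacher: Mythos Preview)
Your overall architecture—build the functor at the point-set level, verify coherence via rigidity, then descend via deformable functors—matches the paper's, but several specific choices would not go through as written.

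First, the model $\Phi^H(X) = (X \wedge \widetilde{E\mathcal{P}})^H$ is the wrong point-set functor here. The paper uses the \emph{monoidal} geometric fixed points (the coequalizer formula of Mandell--May), because what drives the point-set construction is that $\Phi^H$ commutes with $\barsmash$, $f^*$, and $f_!$ up to \emph{natural isomorphism} on freely $f$-cofibrant spectra (\cref{geomfp_preserves}). Your model only agrees with this one up to weak equivalence on good objects, and the comparison maps it gives for smash products are not isomorphisms on the nose; this blocks the rigidity argument at the point-set stage.

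Second, the rigidity theorem from \S\ref{sec:rigid} concerns functors built only from $\barsmash$, $f^*$, and $f_!$. Once $\Phi^H$ is thrown in, one needs a new rigidity statement (\cref{phi_h_rigidity}) for the composite ``multi-span action followed by $\Phi^H$,'' and its proof is not a corollary of the earlier one—it requires an argument via free spectra on trivial one-point $G$-spaces and joint surjectivity onto $G$-fixed levels. Your stage (iii) assumes the existing rigidity theorem suffices; it does not. Relatedly, you have the order of the two tools reversed: rigidity is used \emph{first}, at the point-set level, to produce and verify all coherence isomorphisms; deformability is then used \emph{afterward} to push these down to the homotopy category.

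Third, the main technical obstruction in stage (ii) is already present in the non-fiberwise case, and it is not the one you name. The paper explicitly does not know that $\Phi^H$ preserves level $h$-fibrant spectra, so the default class of \fibrant objects from \S\ref{apply_parameterized_spectra} is not preserved. The fix is to introduce a smaller class of ``$P$-fibrant'' spectra (\cref{p_fibrant})—those isomorphic to $f_! g^* P X$ with $f$ a Hurewicz fibration—and to check that \emph{this} class is preserved by $\Phi^H$, $\barsmash$, $g^*$, and fibration pushforwards (\cref{p_fibrant_preserved}), using the isomorphism $\Phi^H P \cong P \Phi^H$. Without this, coherent right-deformability of the composites in \cref{fig:strong_shadow} and \cref{fig:icon} fails.
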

As already mentioned, all three of these generalizations play important roles in \cite{mp1}.

The rhythm of proof for all of these results is the same.  We first establish a structure at the point-set level, by observing that the functors of interest have no automorphisms other than the identity.  We call such functors {\bf rigid}. Many of the functors we care about on parametrized spectra are rigid by \cite[Thm 4.5.2]{convenient}. We then use this fact to produce coherence isomorphisms and to verify that the relevant compatibilities between coherence isomorphisms hold.

Once we have built the desired structure at the point-set level, we show it descends to the homotopy category using the notion of a {\bf deformable functor} from \cite{dhks}.  This is more general than the notion of a derived functor in a model category, but it has most of the same properties. The extra flexibility is very useful because the most convenient kind of fibration in the parametrized setting does not fit nicely into a model structure. Most importantly, deformable functors allow us to transfer the point-set coherence compatibilities to derived compatibilities in a straightforward way.

While our initial motivation was \cref{intro:thm} and its generalizations, we now regard the primary contribution of this paper to be the  approach to the proofs presented here.  It is much more general than our applications here and has wide ranging uses.

\subsection*{Organization}
In \cref{sec:rigid} we introduce rigid functors and provide the theorem that will allow us to easily verify that the functors of interest in later sections are rigid.  In \cref{sec:point_set} we use the rigidity result of \cref{sec:rigid}  to show that parametrized spectra have a shadowed $n$-Fuller structure and base change objects.  

In \cref{sec:deform} we recall the theory of deformable functors from \cite{dhks}. In \cref{apply_parametrized_spectra} we combine the results of \cref{sec:point_set,sec:deform} to show that the homotopy categories of parametrized spectra assemble to a bicategory with a shadowed $n$-Fuller structure and base change objects.  

\cref{sec:GExEx_B} describes how to modify the results in \cref{sec:point_set,apply_parametrized_spectra} to prove the analogs of \cref{intro:thm} for  equivariant, fiberwise, and equivariant fiberwise parametrized spectra.
In \cref{sec:functors} we apply the techniques of \cref{sec:rigid,sec:deform} to functors between categories of parametrized spectra.

\subsection*{Acknowledgments.}
The first author was partially supported by NSF grants DMS-2005524 and DMS-2052923.
The second author was partially supported by NSF grants DMS-2052905 and DMS-1810779.

\section{Rigid multi-spans}\label{sec:rigid}
In the first three sections of this paper, we work in the symmetric monoidal bifibration (SMBF) of parametrized orthogonal spectra over all base spaces, following the notational conventions of \cite[Thm 6.1.1]{convenient}. This means that we have:
\begin{itemize}
\item a symmetric monoidal category of parametrized orthogonal spectra over all base spaces, denoted $\Osp$  \cite[Def 4.1.1 and 4.1.6]{convenient}, whose monoidal product $\barsmash$ is called the {\bf external smash product},
\item a strict symmetric monoidal functor $\Phi\colon \Osp \to \Top$ that identifies the base space of the parametrized spectrum,
\item for every morphism $f\colon A\to B$ in $\Top$ and spectrum $X$ over $B$, there is a spectrum $f^*(X)$ over $A$ and cartesian arrow $f^*(X)\to X$ over $f$, and 
\item for every morphism $f\colon A\to B$ in $\Top$ and spectrum $Y$ over $A$, there is a spectrum $f_!(Y)$ over $B$ and co-cartesian arrow $f_!(Y)\to Y$ over $f$.
\end{itemize}
We let $\Osp(A)$ denote the fiber of $\Phi$ over $A$. Since $\Phi$ is strict symmetric monoidal, the external smash product of a spectrum $X \in \Osp(A)$ and a spectrum $Y \in \Osp(B)$ gives a spectrum $X \barsmash Y \in \Osp(A \times B)$.

For every map of spaces $f\colon A \to B$, the cartesian arrows define a pullback functor $f^*\colon \Osp(B) \to \Osp(A)$, and the cocartesian arrows define a pushforward functor $f_!\colon \Osp(A) \to \Osp(B)$. The external product preserves cartesian and co-cartesian arrows, which implies that there are canonical isomorphisms $f^*(X \barsmash Y) \cong (f^*X) \barsmash (f^*Y)$ and  $f_!(X \barsmash Y) \cong (f_! X) \barsmash (f_!Y)$.

We also have {\bf Beck-Chevalley isomorphisms} $j_!k^* \cong h^*f_!$ for every pullback square of topological spaces
	\begin{equation}\label{diagonal_square}
	\xymatrix@R=1em @C=1em{
		& A \ar[ld]_-k \ar[rd]^-j & \\
		B \ar[rd]_-f && C \ar[ld]^-h \\
		& D. &
	}
	\end{equation}

This is the structure of a symmetric monoidal bifibration in this particular case. The reader is referred to \cite[12.1]{s:framed} for the definition of a symmetric monoidal bifibration in general. 

In the applications we have in mind, the three operations $\barsmash$, $f^*$, and $f_!$ tend to appear bundled together in the following form.

\begin{defn}\label{df:multi_span}
Given a list of topological spaces $A_1$, $\ldots$, $A_n$, $C$, a {\bf multi-span} from $(A_i)$ to $C$ is a span of the form
\begin{equation}\label{eg:ex_span} \xymatrix @R=5pt { 
		&B \ar[dl]_-f \ar[dr]^-{(g_1,\ldots,g_n)}
		\\
		C && A_1 \times \ldots \times A_n.
	}\end{equation}
In other words, it is a space $B$ and a choice of map from $B$ to each of the spaces $A_1$, $\ldots$, $A_n$, $C$. We say the multi-span in \eqref{eg:ex_span} is {\bf rigid} if the following map is injective:
\[ \xymatrix @R=5pt @C=6em { 
		B \ar[r]^-{(f,g_1,\ldots,g_n)} & C \times A_1 \times \ldots \times A_n.
	}\]
For each multi-span from $(A_i)$ to $C$ as in \eqref{eg:ex_span} we define a functor on parametrized spectra
\begin{equation}\label{action_of_multi_span}
\xymatrix @R=5pt {
	\prod_i \Osp(A_i) \ar[r] & \Osp(C) \\
	(X_1,\ldots,X_n) \ar@{|->}[r] & f_!(g_1,\ldots,g_n)^*(X_1 \barsmash \ldots \barsmash X_n).
}
\end{equation}
We refer to this as the {\bf action} of the multi-span on the tuple of spectra $(X_i)$. 
\end{defn}

\begin{rmk}
This action is why we want to have a product on the right in \eqref{eg:ex_span} but not on the left.  (Of course nothing stops us from replacing $C$ by a product as well.)  
\end{rmk}

\begin{example}\label{ex:no_inputs}
If $n = 0$, the multi-span is of the form $C \overset{f}\leftarrow B \overset{\pi}\rightarrow {\star}$, where $\star$ denotes the one-point space. The action of this multi-span takes no inputs and produces a single parametrized spectrum as output, namely $f_!\pi^*I$, where $I$ is the unit of the symmetric monoidal category $\Osp$. Concretely, $I$ is the sphere spectrum $\Sph$ over the one-point space $\star$, so this action produces $\Sigma^\infty_{+C} B$, the fiberwise suspension spectrum over $C$ of $B_{+C} \cong B \amalg C$. 
\end{example}

Throughout this document we only consider the subcategory $\Osp^c \subseteq \Osp$ of spectra that are freely $f$-cofibrant \cite[Def 4.2.2]{convenient}. The action of a multi-span preserves this class of spectra by \cite[Lem 4.3.1, Thm 4.4.6]{convenient}.

A key property of the functor \eqref{action_of_multi_span} is that for many choices of $g_1,\ldots, g_n,f$ it has very few automorphisms. We say that a functor is {\bf rigid} if the only natural automorphism is the identity transformation. 
\begin{eg}
	The functor
	\[ \xymatrix{
		\Top \times \Top \times \Top \times \Top \ar[r]^-\times & \Top
	} \]
	that sends $(A,B,C,D)$ to the product $A \times B \times C \times D$, is rigid.
	
	The point of rigidity is that it makes it easy to verify coherence diagrams. For instance, once we know the above functor is rigid, we can verify the pentagon identity for the monoidal structure on $\Top$ (compare to \cref{fig:pentagon}) in the following way. The composite of maps around the pentagon is a natural automorphism of $A \times B \times C \times D$. But the only natural isomorphism is the identity, so this composite must be the identity.
\end{eg}

\begin{thm}\label{rigidity}\cite[Thm 4.5.2]{convenient}
	Whenever the multi-span \eqref{eg:ex_span} is rigid, its action \eqref{action_of_multi_span} is a rigid functor.
\end{thm}

As a consequence, any diagram of natural isomorphisms between functors isomorphic to \eqref{action_of_multi_span} automatically commutes.

\begin{rmk}\label{rmk:single_iso}
When $n = 0$, as in \cref{ex:no_inputs}, the rigidity theorem simplifies to the fact that the object $\Sigma^\infty_{+C} B$ has a unique automorphism if $B\to C$ is injective.   This special case will appear frequently in \cref{sec:fuller,sec:base_change}.
\end{rmk}

\section{Structures on the point-set bicategory}\label{sec:point_set}

In this section we use the action of multi-spans from \eqref{action_of_multi_span} to define an extensive formal structure on parametrized spectra. We begin by defining a bicategory of parametrized spectra. Then we add a shadow, compatibility of the external product with symmetry maps, and base change objects for maps of spaces. 
This is the point-set level structure whose associated homotopical structure is used in \cite{mp1} to construct periodic-point invariants.

\subsection{The shadowed bicategory of parametrized spectra}

We start with the simplest and most familiar structure, that parametrized spectra form a bicategory. This result first appears in \cite[17.1.3]{ms}, see also \cite{s:framed}.  The more general result that any symmetric monoidal bifibration (SMBF) defines a bicategory with shadow appears in \cite{PS:indexed}.   A more concise version of the following proof appears in \cite[Thms 6.4.3 and 6.4.8]{convenient}.
We include this proof as a warm-up for 
the more elaborate proofs later in the section.

\begin{prop}\label{lem_bicat}
There is a bicategory of parametrized spectra called $\Osp^c/\Top$, whose 
\begin{itemize}
	\item  0-cells are spaces in $\Top$, and 
	\item 1- and 2-cells from $A$ to $B$ are the category $\Osp^c(A \times B)$ of freely $f$-cofibrant \cite[Def 4.2.2]{convenient} parametrized orthogonal spectra over $A \times B$.
\end{itemize}
\end{prop}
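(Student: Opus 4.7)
The plan is to construct every piece of the bicategory structure as the action of a suitably chosen multi-span and then use \cref{rigidity} both to pin down the coherence isomorphisms uniquely and to force the coherence axioms for free.

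First, I would define horizontal composition of $X \in \Osp(A \times B)^c$ and $Y \in \Osp(B \times C)^c$ to be $X \odot Y = p_!(q_1,q_2)^*(X \barsmash Y) \in \Osp(A \times C)^c$, where $p \colon A \times B \times C \to A \times C$ is projection and $(q_1, q_2)\colon A \times B \times C \to (A \times B) \times (B \times C)$ is the pair of projections. This multi-span is rigid, since the combined map $A \times B \times C \to (A \times C) \times (A \times B) \times (B \times C)$ is visibly injective. For the unit 1-cell on $A$ I would take $U_A = (\Delta_A)_! \pi_A^* \Sph$, the action of the multi-span $A \times A \xleftarrow{\Delta_A} A \to \star$ on no inputs, which is rigid by \cref{rmk:single_iso}.

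For the associator, given $X,Y,Z$ over $A \times B$, $B \times C$, $C \times D$, I would exhibit canonical isomorphisms from both $(X \odot Y)\odot Z$ and $X\odot (Y \odot Z)$ to the action of the common multi-span with apex $A \times B \times C \times D$, left leg the projection to $A \times D$, and right legs the three projections to $(A\times B) \times (B \times C) \times (C \times D)$. These isomorphisms are assembled out of the Beck-Chevalley isomorphisms for the pullback squares that appear when one composes two two-factor multi-spans, together with the strong monoidality of $\barsmash$ and the fact that the external product preserves cartesian and co-cartesian arrows. The common multi-span is again visibly rigid. For the unitors the same strategy applies: both $U_A \odot X$ and $X$ (and likewise $X \odot U_B$ and $X$) are naturally isomorphic to the action of the trivial multi-span on $X$, with the nontrivial direction supplied by Beck-Chevalley applied to the pullback square involving $\Delta_A$.

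The pentagon and triangle axioms then follow automatically. Each is a natural endomorphism of the action of a single rigid multi-span---for the pentagon, the fully unparenthesized four-fold composition $X \odot Y \odot Z \odot W$; for the triangle, the two-fold composition $X \odot Y$ with a unit inserted in the middle---and by \cref{rigidity} any such endomorphism must be the identity. I expect the only real obstacle to be bookkeeping: in each coherence instance one must identify the common rigid multi-span that both sides reduce to and assemble the Beck-Chevalley witnesses of the relevant isomorphism. Once that is in hand, uniqueness of the coherence data, and hence all the axioms, comes for free.
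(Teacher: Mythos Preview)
Your proposal is correct and follows essentially the same approach as the paper: define the unit and composition as actions of explicit multi-spans, use Beck--Chevalley to identify the associator and unitors with actions of common rigid multi-spans, and then invoke \cref{rigidity} to force the pentagon and triangle axioms. The paper's proof differs only in presentation---it writes the spans using the $1\Delta1$, $1\pi1$ shorthand and displays the Beck--Chevalley pullback square for the unitor explicitly---but the content is identical.
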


The notion of a bicategory will be recalled in the proof, and uses the following two diagrams. A more extensive treatment of the concept can be found in \cite{benabou,johnson_yau_2cats}.

\begin{figure}[h]
	\begin{subfigure}[t]{18em}
			\xymatrix{
				(X \odot (Y \odot Z)) \odot W \ar[r]^-\alpha & X \odot ((Y \odot Z) \odot W) \ar[d]^-{\id \odot\alpha} \\
				((X \odot Y) \odot Z) \odot W \ar[d]_-\alpha \ar[u]^-{\alpha\odot \id} & X \odot (Y \odot (Z \odot W)) \\
				(X \odot Y) \odot (Z \odot W) \ar@{=}[r] & (X \odot Y) \odot (Z \odot W) \ar[u]_-\alpha
			}
		\caption{Pentagon axiom }\label{fig:pentagon}
	\end{subfigure}
	\hspace{2em}
	\begin{subfigure}[t]{15em}
			\xymatrix{
				(X \odot U_B) \odot Y \ar[d]_-r \ar[r]^-\alpha & X \odot (U_B \odot Y) \ar[d]^-\ell \\
				X \odot Y \ar@{=}[r] & X \odot Y
			}
		\caption{Triangle (unit) axiom }\label{fig:triangle}
	\end{subfigure}
	\caption{Commutative diagrams for a bicategory.}\label{fig:bicat_1}
\end{figure}

Before we begin the proof, as a point of notation, we will use $1$, $\Delta$, and $\pi$ to refer respectively to the identity map, diagonal map, and projection map of a topological space $A$:
\[ 1\colon A \to A, \qquad \Delta\colon A \to A \times A, \qquad \pi\colon A \to \star. \]
We concatenate the names of these maps to refer to products. So for instance
\[ 1\Delta\pi\colon A \times B \times C \to A \times B \times B \]
is the map that sends $(a,b,c)$ to $(a,b,b)$.

\begin{rmk}\label{rmk:suppressed_maps}
In what follows we will often suppress isomorphisms of topological spaces that arise from associator or symmetry maps.  This can be seen above in the lack of  parenthesizations in \eqref{eg:ex_span}.  It will also notably appear in \eqref{eq:m_box_span}. When working with multi-spans, we will only be interested in the induced functors up to isomorphism, and the omitted maps always induce isomorphisms. In some cases we will include parenthesizations to help indicate the type of input spectra.
\end{rmk}

\begin{proof}
For this result we need to produce unit 1-cells $U_A \in \Osp^c(A \times A)$, bicategorical compositions $\odot\colon \Osp^c(A \times B) \times \Osp^c(B \times C) \to \Osp^c(A \times C)$, and associator and unitor isomorphisms that make the diagrams in \cref{fig:bicat_1} commute. We start with the unit and bicategorical composition.

\begin{itemize}
	\item The unit $U_A$ is the suspension spectrum of $A_{+(A \times A)}$ over $A \times A$. Equivalently, it is the unique object produced by the action of the multi-span
	\[ \xymatrix @R=5pt { 
		&A\ar[dl]_-{\Delta}\ar[dr]^-{\pi}
		\\
		A \times A && \prod_\emptyset.
	}\]
	\item For $X \in \Osp^c(A \times B)$ and $Y \in \Osp^c(B \times C)$, the bicategorical product $X \odot Y$ is defined to be the action of the multi-span
	\begin{equation}\label{eq:odot}
	\xymatrix @R=5pt { 
		&A \times B \times C\ar[dl]_-{1\pi 1}\ar[dr]^-{1\Delta 1}
		\\
		A \times C && (A \times B) \times (B \times C).
	}\end{equation}
\end{itemize}
This is the first instance of parenthesization of the spaces along the bottom row by the type of 1-cell input as referenced in \cref{rmk:suppressed_maps}.  In this case we have a 1-cell over $A\times B$ and a 1-cell over $B\times C$. 

We now turn to the associator and unitor maps.  
For these we will make use of the Beck-Chevalley isomorphisms along pullback squares in $\Top$ in \eqref{diagonal_square}.  

\begin{itemize}
	\item To construct the unitor isomorphism $\ell\colon U_A \odot X \cong X$, we notice that the functor $U_A\odot -$ is given by the action of the two multi-spans along the bottom of the diagram below.
	\[ \xymatrix @R=5pt { 
		&&A\times B\ar[dl]_-{\triangle 1}\ar[dr]^-{\triangle1}
		\\
		&A\times A\times B\ar[dl]_-{1\pi1}\ar[dr]^{1\triangle1}&&A\times A\times B\ar[dr]^-{\pi11}\ar[dl]_-{\triangle11}
			\\
	A\times B &&A\times A\times A \times B&&
A\times B
 	}\] 
The commuting square in the middle of this diagram is a pullback square of topological spaces. Therefore we have a Beck--Chevalley isomorphism between the functor $(1\triangle 1)^*(\triangle 11)_!$ and the functor $(\triangle 1)_!(\triangle1)^*$.  Applying this isomorphism and simplifying, we see that the action of the two multi-spans along the bottom is isomorphic to the action of the multi-span along the top, which is just the trivial multi-span of identity maps
	\begin{equation}\label{eq:id_multispan} \xymatrix @R=5pt { 
		&A \times B \ar[dl]_-{11}\ar[dr]^-{11}
		\\
		A \times B && A \times B,
	}\end{equation}
	whose action is the identity functor $\Osp^c(A \times B) \to \Osp^c(A \times B)$. Therefore the functor $X \mapsto U_A \odot X$ and the identity functor $X \mapsto X$ are naturally isomorphic. The multi-span \eqref{eq:id_multispan} is rigid, so by \cref{rigidity}, any two functors that are naturally isomorphic to the action of \eqref{eq:id_multispan}, must have a unique natural isomorphism between them. In particular, the natural isomorphism $U_A \odot X \cong X$ is unique! We can (and must) take the unitor isomorphism $\ell\colon U_A \odot X \cong X$ to be this unique natural isomorphism.
	
	The unitor isomorphism $r\colon X \odot U_B \cong X$ is constructed in the same way.

	\item Similarly, we define the associator $\alpha$ to be the unique natural isomorphism $(X \odot Y) \odot Z \cong X \odot (Y \odot Z)$ of functors $\Osp^c(A \times B) \times \Osp^c(B \times C) \times \Osp^c(C \times D) \to \Osp^c(A \times D)$. To see that this isomorphism exists and is unique, we identify both of the functors $(X \odot Y) \odot Z$ and $X \odot (Y \odot Z)$ with the action of the multi-span
	\[ \xymatrix @R=5pt { 
		&A \times B \times C \times D\ar[dl]_-{1\pi \pi 1}\ar[dr]^-{1\Delta\Delta 1}
		\\
		A \times D && (A \times B) \times (B \times C) \times (C \times D),
	}\]
using Beck-Chevalley isomorphisms to exchange pushforward and pullback functors, just as we did for the unitor isomorphism in the previous stage of the proof. Since the above multi-span is rigid, by \cref{rigidity}, any two functors that are naturally isomorphic to the action of this multi-span must have a unique natural isomorphism between them. So we can (and must) choose $\alpha$ to be this unique natural isomorphism.

Here we have an example of the suppressed isomorphisms referenced in \cref{rmk:suppressed_maps}.  The bottom right product is parenthesized differently for $(X\odot Y)\odot Z$ and $X\odot (Y\odot Z)$, but the resulting spaces are isomorphic, and we omit the map.
\end{itemize}
The axioms for a bicategory now follow similarly:
\begin{itemize}
	\item	For the pentagon axiom in \cref{fig:pentagon}, the multi-span defining the four-fold product $X \odot Y \odot Z \odot W$ is rigid:
	\begin{equation}\label{eq:pentagon} \xymatrix @R=5pt { 
		&A \times B \times C \times D \times E\ar[dl]_(.6){1\pi \pi \pi 1}\ar[dr]^-{1\Delta\Delta\Delta 1}
		\\
		A \times E && (A \times B) \times (B \times C) \times (C \times D) \times (D \times E).
	}\end{equation} All of the operations in \cref{fig:pentagon} are isomorphic to the action of this multi-span. So by \cref{rigidity}, they are uniquely isomorphic to the action of this multi-span, and the diagram in \cref{fig:pentagon} automatically commutes.
	\item The triangle unit axiom in \cref{fig:triangle} follows by the same argument using the following multi-span.
	\begin{equation}\label{eq:triangle_multispan} \xymatrix @R=5pt { 
		&A \times B \times C\ar[dl]_-{1\pi 1}\ar[dr]^-{1\Delta 1}
		\\
		A \times C && (A \times B) \times (B \times C).
	}\end{equation}
\end{itemize}
\end{proof}

The rhythm of this proof will be replicated for the remaining results in this section and for the results in \cref{sec:GExEx_B}.

\begin{defn}\label{df:shadow} Following \cite{p:thesis}, a {\bf shadow} on a bicategory $\mathcal{B}$ is a functor $\sh{\,}\colon \sB(A,A)\to \cat{T}$ to a fixed category $\cat{T}$ for each 0-cell $A$ and natural ``rotator'' isomorphisms 
\[\theta\colon \sh{X\odot Y}\cong \sh{Y\odot X}\]  such that the diagrams in \cref{fig:bicat} commute whenever they are defined. 
\end{defn}

\begin{figure}[h]
	\begin{subfigure}[t]{15em}
			\xymatrix{
				\sh{(X \odot Y) \odot Z} \ar[d]_-\theta \ar[r]^-\alpha & \sh{X \odot (Y \odot Z)} \ar[d]^-\theta \\
				\sh{Z \odot (X \odot Y)} \ar@{<-}[d]_-\alpha & \sh{(Y \odot Z) \odot X} \ar[d]^-\alpha \\
				\sh{(Z \odot X) \odot Y}  & \sh{Y \odot (Z \odot X)} \ar[l]^-\theta
			}
		\caption{Shadow associator axiom }\label{fig:shadow_associator}
	\end{subfigure}
	\hspace{2em}
	\begin{subfigure}[t]{12em}
			\xymatrix{
				\sh{X \odot U_A} \ar[d]_-\theta \ar[rd]^-r & \\
				\sh{U_A \odot X} \ar[d]_-\theta \ar[r]^-\ell & \sh{X} \\
				\sh{X \odot U_A} \ar[ru]^-r &
			}
		\caption{Shadow unitor axiom }\label{fig:shadow_unitor}
	\end{subfigure}
	\caption{Commutative diagrams for a bicategory with shadow.}\label{fig:bicat}
\end{figure}

\begin{prop}\label{df:shadow_span}
The action of the rigid multi-span
\[ \xymatrix @R=5pt { 
		&A\ar[dl]_-{\pi}\ar[dr]^-{\Delta}
		\\
		{\star} && (A \times A).
	}\]
defines a shadow on the bicategory $\Osp^c/\Top$.
\end{prop}

\begin{proof}This proof is similar to the proof of \cref{lem_bicat}  in both structure and details, so we will be more brief.

\begin{itemize}
	\item The rotator isomorphism $\theta\colon \sh{X \odot Y} \cong \sh{Y \odot X}$  exists by comparing both actions to that of the rigid multi-span
	\[ \xymatrix @R=5pt { 
		&A \times B\ar[dl]_-{\pi \pi}\ar[dr]^-{\Delta\Delta}
		\\
		{\star} && (A \times B) \times (B \times A).
	}\] 
\end{itemize}

	This multi-span gives us a more interesting example of the suppressed maps referenced in \cref{rmk:suppressed_maps}.  The map $\Delta\Delta$ has target $A\times A\times B\times B$.  This is replaced by the isomorphic objects $A\times B\times B\times A$ and $B\times A\times A\times B$ to define the source and target of $\theta$.
\begin{itemize}

	\item To prove the associator axiom in \cref{fig:shadow_associator}, we compare all functors to the action of the multi-span
	\[ \xymatrix @R=5pt { 
		&A \times B \times C\ar[dl]_-{\pi \pi \pi}\ar[dr]^-{\Delta\Delta\Delta}
		\\
		{\star} && (A \times B) \times (B \times C) \times (C \times A).
	}\]
	Since this multi-span is rigid, the diagram in \cref{fig:shadow_associator} commutes.
	\item To prove the unitor axiom in \cref{fig:shadow_unitor}, we compare all functors to the action of the rigid multi-span
	\[
	\xymatrix @R=5pt { 
		&A\ar[dl]_-{\pi}\ar[dr]^-{\Delta}
		\\
		{\star} && (A \times A).
	}\]
\end{itemize}
\end{proof}

We have now finished giving the more familiar structure exhibited by the bicategory $\Osp^c/\Top$. For readers primarily interested in the technique of proof, it may make sense to skip ahead to \cref{descend} for the discussion of the descent of this structure to the homotopy category.  The remainder of this section describes the extensive additional structure that the bicategory $\Osp^c/\Top$ enjoys, and which is used in \cite{mp1} to construct periodic-point invariants.

\subsection{Fuller structure}\label{sec:fuller}
In this subsection we describe the compatibility between the external product $\barsmash$, cyclic permutations of the factors, and the shadow on the bicategory  $\Osp^c/\Top$. We will repeatedly use the notion of a \emph{pseudofunctor} (strong functor) and a \emph{pseudonatural transformation}. We will recall what these mean when we need the details, but a more extensive treatment can also be found in \cite{benabou,johnson_yau_2cats}.

For any bicategory $\sB$, a pseudofunctor of the form
\begin{equation}\label{eq:shadowenfuller1}
\boxtimes \colon \underbrace{\sB\times \ldots \times \sB}_n\to \sB
\end{equation}
is given by
\begin{itemize}
	\item a function on 0-cells sending each $n$-tuple $(A_1,\ldots,A_n)$ to a 0-cell that we call $\boxtimes A_i$,
	\item a functor $\boxtimes\colon \prod \sB(A_i,B_i) \to \sB(\boxtimes A_i,\boxtimes B_i)$ for each pair of $n$-tuples of 0-cells $(A_1,\ldots,A_n)$ and $(B_1,\ldots,B_n)$ sending each $n$-tuple of 1-cells $M_i \in \sB(A_i,B_i)$ to a 1-cell $\boxtimes M_i \in \sB(\boxtimes A_i,\boxtimes B_i)$, and
	\item invertible 2-cells that we call the composition and unit isomorphisms
\[ m_\boxtimes\colon (\boxtimes M_i)\odot (\boxtimes N_i) \cong \boxtimes (M_i\odot N_i), \qquad U_{\boxtimes A_i} \cong \boxtimes U_{A_i}, \]
\end{itemize}
such that the diagrams in \cref{fig:shadowed} commute.
\begin{figure}[h]
	\begin{subfigure}[t]{21em}
		\resizebox{\textwidth}{!}{
			\xymatrix{
				(\boxtimes M_i \odot \boxtimes N_i) \odot \boxtimes P_i \ar[r]^-\alpha \ar[d]_-{m_{\boxtimes}\odot \id} &
				\boxtimes M_i \odot (\boxtimes N_i \odot \boxtimes P_i) \ar[d]^-{\id\odot m_\boxtimes} \\
				\boxtimes (M_i \odot N_i) \odot \boxtimes P_i \ar[d]_-{m_{\boxtimes}} &
				\boxtimes M_i \odot \boxtimes (N_i \odot P_i) \ar[d]^-{m_\boxtimes} \\
				\boxtimes ((M_i \odot N_i) \odot P_i) \ar[r]^-\alpha &
				\boxtimes (M_i \odot (N_i \odot P_i))
			}
		}
		\caption{The pseudofunctor associator axiom }\label{fig:pseudo_odot}
	\end{subfigure}

	\begin{subfigure}[t]{17em}
		\resizebox{\textwidth}{!}{
			\xymatrix{
				U_{\boxtimes A_i} \odot (\boxtimes M_i) \ar[d]_-{i_\boxtimes\otimes \id} \ar[r]^-\ell & \boxtimes M_i \\
				(\boxtimes U_{A_i}) \odot (\boxtimes M_i) \ar[r]^-{m_\boxtimes} & \boxtimes (U_{A_i} \odot M_i) \ar[u]_-{\ell}
			}
		}
		\caption{The pseudofunctor unitor axiom for $\ell$}\label{fig:pseudo_unit}
	\end{subfigure}
\hspace{1cm}
	\begin{subfigure}[t]{17em}
		\resizebox{\textwidth}{!}{
			\xymatrix{
				(\boxtimes M_i)\odot U_{\boxtimes B_i}  \ar[d]_-{\id \otimes i_\boxtimes} \ar[r]^-r & \boxtimes M_i \\
				 (\boxtimes M_i)  \odot (\boxtimes U_{B_i})\ar[r]^-{m_\boxtimes} & \boxtimes (M_i\odot U_{B_i} ) \ar[u]_-{r}
			}
		}
		\caption{The pseudofunctor unitor axiom for $r$}\label{fig:pseudo_unit_2}
	\end{subfigure}
	\caption{Commutative diagrams for a pseudofunctor.}\label{fig:shadowed}
\end{figure}

\begin{lem}\label{nfull_funct}
There is a pseudofunctor of bicategories
	\[\boxtimes \colon \underbrace{\Osp^c/\Top \times \ldots \times \Osp^c/\Top}_n\to \Osp^c/\Top \]
that is 
\begin{itemize}
\item the product on 0-cells and 
\item on 
 1-cells and 2-cells is the $n$-fold external smash product \cite[Def 4.4.5]{convenient}, pulled back along the canonical isomorphism 
	\[\prod_i (A_i \times B_i) \cong \prod_i A_i \times \prod_i B_i.\] 
\end{itemize}
\end{lem}

\begin{proof} 
Again this follows the pattern of the proof of \cref{lem_bicat}, so we will be brief. It is enough to produce the composition and unit isomorphisms and to verify the pseudofunctor axioms.
\begin{itemize}
	\item 
	The composition isomorphism $m_\boxtimes \colon (\boxtimes M_i)\odot (\boxtimes N_i) \cong \boxtimes (M_i\odot N_i)$ arises because both sides can be identified with the action of the rigid multi-span
	\begin{equation}\label{eq:m_box_span} \xymatrix @R=5pt { 
		&\prod_i A_i \times \prod_i B_i \times \prod_i C_i\ar[dl]_-{1\pi 1}\ar[dr]^-{1\Delta 1}
		\\
		\prod_i A_i \times \prod_i C_i && \prod_i (A_i \times B_i) \times (B_i \times C_i).
	}\end{equation}
\end{itemize}

	This provides yet another example of the suppressed maps of \cref{rmk:suppressed_maps}.  The input of $(\boxtimes M_i)\odot (\boxtimes N_i)$ is indexed on $\prod_i A_i \times  \prod_i B_i \times  \prod_i B_i \times  \prod_i C_i$ while the result is indexed on $\prod_i A_i \times \prod_i C_i $.  The input of $\boxtimes (M_i\odot N_i)$ is indexed on $\prod_i (A_i \times B_i) \times (B_i \times C_i)$ while the output is indexed on $\prod_i (A_i \times C_i) $. 
\begin{itemize}
	\item 
	Similarly, the unit isomorphism $i_\boxtimes\colon U_{\prod_i A_i} \cong \boxtimes U_{A_i}$ comes from the rigid multi-span
	\[ \xymatrix @R=5pt { 
		&\prod_i A_i \ar[dl]_-{\Delta}\ar[dr]^-{\pi}
		\\
		\prod_i A_i \times \prod_i A_i && \prod_\emptyset.
	}\]
	This is a manifestation of \cref{rmk:single_iso} where 
	the parametrized spectrum has a unique automorphism.
	\item The pseudofunctor associator axiom (\cref{fig:pseudo_unit}) follows using the
rigid multi-span
	\[ \xymatrix @R=5pt { 
		&\prod_i A_i \times \prod_i B_i \times \prod_i C_i \times \prod_i D_i\ar[dl]_(.6){1\pi\pi 1}\ar[dr]^-{1\Delta\Delta 1}
		\\
		\prod_i A_i \times \prod_i D_i && \prod_i (A_i \times B_i) \times (B_i \times C_i) \times (C_i \times D_i)
	}\] 
	\item The pseudofunctor unitor axiom (\cref{fig:pseudo_odot}) follows using the rigid multi-span
	\[ \xymatrix @R=5pt { 
		&\prod_i A_i \times \prod_i B_i\ar[dl]_-{=}\ar[dr]^-{\cong }
		\\
		\prod_i A_i \times \prod_i B_i && \prod_i (A_i \times B_i).
	}\]
\end{itemize}
\end{proof}

Returning to the case of a general bicategory $\sB$ with a pseudofunctor of the form given in \eqref{eq:shadowenfuller1}, let us consider in addition the shift pseudofunctor $\shift\colon \sB\times \ldots \times \sB\to \sB\times \ldots \times \sB$ that permutes the leftmost $\sB$ to the right. So it sends the $n$-tuple of 0-cells $(A_1,\ldots,A_n)$ to the $n$-tuple $(A_2,\ldots,A_n,A_1)$. In shorthand, it sends $(A_i)$ to $(A_{i+1})$. Similarly, it sends every $n$-tuple of 1-cells $(M_i)$ to the $n$-tuple $(M_{i+1})$.

By definition, a pseudonatural transformation
	\begin{equation}\label{eq:shadowenfuller2}
\vartheta\colon \boxtimes \circ \shift \to \boxtimes
\end{equation}
is given by
\begin{itemize}
	\item 1-cells that we call {\bf twisting objects}
	\begin{align*}
	T_{(A_i)} &\in \sB(  \boxtimes A_{i+1}, \boxtimes A_i)
	\end{align*}
	for each $n$-tuple of 0-cells $(A_i)$, and
	\item natural  isomorphisms of functors $\prod \sB(A_i,B_i) \to \sB(\boxtimes A_{i+1},\boxtimes B_i)$
	\begin{equation}\label{eq:vartheta} \vartheta\colon T_{(A_i)}\odot (\boxtimes M_{i}) \xto\cong (\boxtimes M_{i+1})\odot T_{(B_i)},
	\end{equation}
\end{itemize}
such that the diagrams in \cref{fig:shadowed1} commute.
\begin{figure}[h]
	\begin{subfigure}[t]{22em}
		\resizebox{\textwidth}{!}{
			\xymatrix@C=12pt{
				(T_{(A_i)}\odot \boxtimes M_i)\odot \boxtimes N_i\ar[r]^-\alpha \ar[d]_-{\vartheta\odot \id} &
				T_{(A_i)}\odot (\boxtimes M_i\odot \boxtimes N_i)\ar[d]^-{\id \odot m_\boxtimes} \\
				(\boxtimes M_{i+1}\odot T_{(B_i)})\odot  \boxtimes N_i\ar[d]_-\alpha &
				T_{(A_i)}\odot \boxtimes (M_i\odot N_i)\ar[d]^-\vartheta \\
				\boxtimes M_{i+1}\odot (T_{(B_i)}\odot  \boxtimes N_i)\ar[d]_-{\id\odot \vartheta} &
				\boxtimes (M_{i+1}\odot  N_{i+1})\odot T_{(C_i)} \\
				\boxtimes M_{i+1}\odot  (\boxtimes N_{i+1}\odot T_{(C_i)})\ar[r]_-\alpha &
				(\boxtimes M_{i+1}\odot  \boxtimes N_{i+1})\odot T_{(C_i)}\ar[u]_-{m_\boxtimes\odot \id}
			}
		}
		\caption{The pseudonatural composition axiom }\label{fig:shadowed_odot}
	\end{subfigure}
	\begin{subfigure}[t]{18em}
		\resizebox{\textwidth}{!}{
			\xymatrix@C=12pt{
				U_{\prod A_{i+1}} \odot T_{(A_i)} \ar[d]_-{i_\boxtimes\odot \id} \ar[r]^-\ell & T_{(A_i)} & \ar[l]_-r T_{(A_i)} \odot U_{\prod A_i} \ar[d]^-{\id \odot i_\boxtimes} \\
				(\boxtimes U_{A_{i+1}}) \odot T_{(A_i)} \ar[rr]^-\vartheta && T_{(A_i)} \odot (\boxtimes U_{A_i})
			}
		}
		\caption{The pseudonatural unit axiom }\label{fig:shadowed_unit}
	\end{subfigure}
	\caption{Commutative diagrams for a pseudonatural transformation.}\label{fig:shadowed1}
\end{figure}

We will now construct such a pseudonatural transformation on the bicategory $\Osp^c/\Top$.
\begin{itemize}
\item 
	 The action of the multi-span
	\begin{equation}\label{defn_Ta} \xymatrix @R=5pt { 
		&\prod_i A_i\ar[dl]_-{(\shift,1)}\ar[dr]^-{\pi}
		\\
		\prod_i A_{i+1} \times \prod_i A_i && \prod_\emptyset
	}\end{equation}
	defines a 1-cell $T_{(A_i)}$ for each $n$ tuple of objects $(A_1,\ldots ,A_n)$ in $\sB$.
\item 
	 Comparing both sides of the map in \eqref{eq:vartheta} to the action of the multi-span
	\begin{equation}\label{span:vartheta}\xymatrix @R=5pt { 
		&\prod_i A_i \times \prod_i B_i\ar[dl]_-{\shift1}\ar[dr]^-{\cong}
		\\
		\prod_i A_{i+1} \times \prod_i B_i && \prod_i (A_i \times B_i).
	}\end{equation}
gives a unique natural isomorphism $\vartheta\colon T_{(A_i)}\odot (\boxtimes M_{i}) \xto\cong (\boxtimes M_{i+1})\odot T_{(B_i)}$.
\end{itemize}

\begin{lem}\label{nfull_nattran} The 1-cells $T_{(A_i)}$ defined by \eqref{defn_Ta}  and the 2-cells defined by  \eqref{span:vartheta} define a pseudonatural transformation $\vartheta\colon \boxtimes \circ \shift \to \boxtimes$, where  
 $\boxtimes$ is the external smash product pseudofunctor defined in \cref{nfull_funct}.
\end{lem}

\begin{proof}
It is enough to verify that the axioms in \cref{fig:shadowed1} are satisfied for $\vartheta$ and the maps defined earlier in \cref{lem_bicat,nfull_funct}.
\begin{itemize}
\item The pseudonatural composition axiom (\cref{fig:shadowed_odot}) follows from the rigid multi-span 
	\[ \xymatrix @R=5pt { 
		&\prod_i A_i \times \prod_i B_i \times \prod_i C_i\ar[dl]_-{\Gamma\pi 1}\ar[dr]^-{1\Delta 1}
		\\
		\prod_i A_{i+1} \times \prod_i C_i && \prod_i (A_i \times B_i) \times (B_i \times C_i).
	}\]
\item  The pseudonatural unit axiom (\cref{fig:shadowed_unit}) follows from the rigid multi-span 
	\[ \xymatrix @R=5pt { 
		&\prod_i A_i\ar[dl]_-{(\shift,1)}\ar[dr]^-{\pi}
		\\
		\prod_i A_{i+1} \times \prod_i A_i && \prod_\emptyset.
	}\]
\end{itemize}
\end{proof}

Now we are ready to state the main result of this section, a structure on the bicategory of parametrized spectra $\Osp^c/\Top$ that extends the two previous constructions.
\begin{defn}\label{df:shadowed_nfuller}
Fix an integer $n \geq 1$. A {\bf shadowed $n$-Fuller structure} on a bicategory with shadow $\sB$ is the following data and conditions:
\begin{itemize}
	\item a pseudofunctor of bicategories
	$\boxtimes \colon \underbrace{\sB\times \ldots \times \sB}_n\to \sB $ as in \eqref{eq:shadowenfuller1},
	\item 
a pseudonatural transformation
	$\vartheta\colon \boxtimes \circ \shift \to \boxtimes$ as in \eqref{eq:shadowenfuller2}, and
	\item a natural isomorphism 
	\begin{equation}\label{eq:tau} 
\tau\colon \sh{T_{(A_{i-1})}\odot \boxtimes Q_i} \xto\cong \sh{Q_1\odot\ldots\odot Q_n} 
\end{equation}
such that the diagram in  \cref{fig:shadowed_twist} commutes for all $R_i\in \sB(A_{i-1},B_i)$ and $S_i\in \sB(B_i,A_i)$. 
\end{itemize}
\end{defn}

\begin{figure}[h]
		\centerline{
		\xymatrix@R=10pt{\sh{T_{(A_{i-1})}\odots{\prod A_{i-1}} (\boxtimes R_{i}\odot \boxtimes S_i) }\ar[r]^-{\id \odot m_\boxtimes} \ar[d]^\alpha&
			\sh{T_{(A_{i-1})}\odots{\prod A_{i-1}}\boxtimes ( R_{i}\odot S_i) }\ar[r]^-\tau
			&\sh{ R_1\odot  S_1\odot  R_2\odot \ldots \odot R_n\odot  S_n} 
			\ar[ddddd]^\theta
			\\
			\sh{(T_{(A_{i-1})}\odots{\prod A_{i-1}} \boxtimes R_{i})\odot \boxtimes S_i }\ar[d]^{\vartheta\odot \id}
			\\
			\sh{(\boxtimes R_{i+1}\odot T_{(B_{i})})\odots{\prod B_{i}} \boxtimes S_i }\ar[d]^\alpha
			\\
			\sh{\boxtimes R_{i+1}\odot (T_{(B_{i})}\odots{\prod B_{i}} \boxtimes S_i) }\ar[d]^\theta
			\\
			\sh{(T_{(B_{i})}\odots{\prod B_{i}} \boxtimes S_i) \odot\boxtimes R_{i+1} }\ar[d]^\alpha
			\\
			\sh{T_{(B_{i})}\odot (\boxtimes S_i\odots{\prod A_{i}} \boxtimes R_{i+1}) }\ar[r]^-{\id \odot m_\boxtimes} 
			&\sh{T_{(B_{i})} \odot \boxtimes (S_i\odots{\prod A_{i}} R_{i+1})}\ar[r] ^-\tau
			&\sh{ S_1\odot R_2\odot \ldots\odot R_n\odot S_n\odot  R_1}
		}
		}
		\caption{Compatibility with the twist map.}\label{fig:shadowed_twist}
\end{figure}

\begin{prop}\label{prop:n_fuller}
	The bicategory $\Osp^c/\Top$ has a shadowed $n$-Fuller structure with
	\begin{itemize}
		\item the external smash product pseudofunctor $\boxtimes$ from \cref{nfull_funct},
		\item the pseudonatural transformation $\vartheta\colon \boxtimes \circ \shift \to \boxtimes$ from \cref{nfull_nattran}, and 
		\item $\tau$ defined to be the unique natural isomorphism $\sh{T_{(A_{i-1})}\odot \boxtimes Q_i} \xto\cong \sh{Q_1\odot\ldots\odot Q_n}$.
	\end{itemize}
\end{prop}

\begin{proof}
	The isomorphism $\tau$ exists and is unique by comparing both $\sh{T_{(A_{i-1})}\odot \boxtimes Q_i}$ and $\sh{Q_1\odot\ldots\odot Q_n}$ to the action of the rigid multi-span
	\[ \xymatrix @R=5pt { 
		&\prod_i A_i\ar[dl]_-{\pi}\ar[dr]^-{\Delta}
		\\
		{\star} && \prod_i (A_{i-1} \times A_i).
	}\]
	(Note the suppressed isomorphism on the pullback map.)
	The commuting diagram in \cref{fig:shadowed_twist} follows by identifying any one of the functors in the diagram with the action of the rigid multi-span
	\[ \xymatrix @R=5pt { 
		&\prod_i A_i \times \prod_i B_i\ar[dl]_-{\pi\pi}\ar[dr]^-{\Delta\Delta}
		\\
		{\star} && \prod_i (A_{i-1} \times B_i) \times (B_i \times A_i).
	}\]
\end{proof}

\subsection{Base change}\label{sec:base_change}
We next formalize the structure used to define $T_{(A_i)}$ in \eqref{defn_Ta}.

\begin{defn}\label{df_sobc} Let $\bS$ be a 1-category and let $\mathcal{B}$ be a bicategory.  A  {\bf system of base change objects for $\sB$ indexed by $\bS$} is 
a pseudofunctor $[]\colon \bS \to \sB$.\footnote{In \cite{mp1}, this definition and \cref{df_sobc_2} were combined into a single definition.  This had the unfortunate effect of introducing an unsatisfying equality and so we have chosen to separate these definitions here. }
\end{defn}

This is closely related to the notion of a framed bicategory from \cite{s:framed}; any framed bicategory has such a system of base change objects by \cite[Prop 4.5]{s:framed}.

\begin{lem}\label{base_change_functor}\label{prop:base_change_objects}
There is a pseudofunctor $[]\colon \Top \to \Osp^c/\Top$ that is the identity on 0-cells, and that sends each morphism $A\xto{f} B$ to the object of $\Osp^c(B \times A)$ defined by
the action of the multi-span
\begin{equation}\label{span:base_change} \xymatrix @R=5pt { 
	&A\ar[dl]_-{(f,1)}\ar[dr]^-{\pi}
	\\
	B \times A && \prod_\emptyset.
}\end{equation}
\end{lem}

This gives the bicategory $\Osp^c/\Top$ a system of base change objects indexed by the 1-category $\Top$.

Recall from \cref{ex:no_inputs} that a multi-span of this form takes no inputs and produces a single 1-cell as an output. We call this 1-cell $\bcr{A}{f}{B}$, because we think of it as a 1-cell from $B$ to $A$. To extend this to a pseudofunctor, we need to define composition and unit isomorphisms
	\[ m_{[]}\colon \bcr{B}{g}{C} \odot \bcr{A}{f}{B} \cong \bcr{A}{gf}{C},
	\qquad
	i_{[]}\colon U_A \cong \bcr{A}{\id}{A}, \]
that satisfy the diagrams in \cref{fig:base_change_1}.

\begin{figure}[h]
	\begin{subfigure}[t]{32em}
		\resizebox{\textwidth}{!}{
			\xymatrix@C=12pt{
				\left(\bcr{C}{h}{D} \odot \bcr{B}{g}{C}\right) \odot \bcr{A}{f}{B} \ar[r]^-\alpha \ar[d]_-{m_{{[]}}} &
				\bcr{C}{h}{D} \odot \left(\bcr{B}{g}{C} \odot \bcr{A}{f}{B}\right) \ar[d]^-{m_{[]}} \\
				\bcr{B}{hg}{D} \odot \bcr{A}{f}{B} \ar[d]_-{m_{{[]}}} &
				\bcr{C}{h}{D} \odot \bcr{A}{gf}{B} \ar[d]^-{m_{[]}} \\
				\bcr{A}{hgf}{D} \ar@{=}[r] &
				\bcr{A}{hgf}{D}
			}
		}
		\caption{The pseudofunctor associator axiom }\label{fig:bc_odot}
	\end{subfigure}

	\begin{subfigure}[t]{17em}
		\resizebox{\textwidth}{!}{
			\xymatrix@C=16pt{
				U_B \odot \bcr{A}{f}{B} \ar[d]_-{i_{[]}\otimes \id} \ar[r]^-\ell & \bcr{A}{f}{B} \\
				\bcr{B}{\id}{B} \odot \bcr{A}{f}{B} \ar[r]^-{m_{[]}} & \bcr{A}{f}{B} \ar@{=}[u]
			}
		}
		\caption{The pseudofunctor left unitor axiom }\label{fig:bc_unit_l}
	\end{subfigure}
\hspace{1cm}
	\begin{subfigure}[t]{17em}
		\resizebox{\textwidth}{!}{
			\xymatrix@C=16pt{
				\bcr{A}{f}{B} \odot U_B \ar[d]_-{\id\otimes i_{[]}} \ar[r]^-r & \bcr{A}{f}{B} \\
				\bcr{A}{f}{B} \odot \bcr{B}{\id}{B} \ar[r]^-{m_{[]}} & \bcr{A}{f}{B} \ar@{=}[u]
			}
		}
		\caption{The pseudofunctor right unitor axiom }\label{fig:bc_unit_r}
	\end{subfigure}
	\caption{Commutative diagrams for the pseduofunctor in  a system of base change objects.}\label{fig:base_change_1}
\end{figure}

\begin{proof}
	The composition isomorphism $m_{[]}$ is the unique isomorphism $\bcr{B}{g}{C} \odot \bcr{A}{f}{B} \cong \bcr{A}{gf}{C}$ in $\Osp^c(C \times A)$. It exists because both objects can be identified with the output of the action of the rigid multi-span
	\[ \xymatrix @R=5pt { 
		&&A\ar[dl]_-{(gf,f,1)}\ar[dr]^{(\pi,1)}
		\\
		&C\times B\times A\ar[dl]_-{(1,\pi,1)}\ar[dr]^-{(1,\Delta, 1)}&&B\times A\ar[dl]_-{(g,1,f,1)}\ar[dr]^-{\pi}
		\\
		C \times A &&C\times B\times B\times A&& \prod_\emptyset.
	}\]
	Note that the left-hand composite is $(gf,1)$ and the right-hand composite is $\pi$.

	The unit isomorphism $i_{[]}$ is similarly the unique isomorphism $U_A \cong \bcr{A}{=}{A}$. In fact, we defined both of these 1-cells using the same multi-span, so we could even say that $i_{[]}$ is an identity map.

The multi-span in \eqref{span:base_change} is rigid, so base change 1-cells  have no nontrivial automorphisms.   This implies that  the diagrams in \cref{fig:bc_unit_l,fig:bc_unit_r,fig:bc_odot} commute. (This is another manifestation of \cref{rmk:single_iso}.)
\end{proof}

We now describe the compatibility of base change objects with the shadowed $n$-Fuller structure.

\begin{defn}\label{df_iicon}
	Let $\mathcal{C}$ and $\mathcal{D}$ be bicategories, and let $F,G\colon \mathcal{C}\to \mathcal{D}$ be two pseudofunctors such that $F(A)=G(A)$ for every 0-cell $A$ in $\mathcal{C}$. A {\bf vertical natural isomorphism}, also known as an {\bf invertible icon} \cite{lack2010icons}, consists of the following:
\begin{itemize}
\item For each 1-cell $X \in \mathcal{C}(A,B)$, an invertible 2-cell $\eta_X\colon F(X)\to G(X)$. 
\item For each 2-cell $\mu \colon X \to Y$ in $\mathcal{C}(A,B)$, the diagram 
in \cref{icon_nat} commutes. (In other words, $\eta$ defines a natural transformation of functors $\mathcal{C}(A,B) \to \mathcal{D}(F(A),F(B))$.)
\item For each 0-cell $A$ of $\mathcal{C}$,  the diagram of 2-cells
in \cref{icon_2_unit} commutes. 
\item For each composable pair of 1-cells $A\xrightarrow{X}B\xrightarrow{Y}C$ in $\mathcal{C}$,
the diagram of 2-cells in \cref{icon_2_composition}  commutes.
\end{itemize}
\end{defn}

\begin{figure}[h]
	\begin{subfigure}[t]{8em}
\xymatrix{F(X)\ar[r]^-{F(\mu)}\ar[d]^{\eta_{X}}&F(Y)\ar[d]^{\eta_{Y}}\\
G(X)\ar[r]^-{G(\mu)}&G(Y)}
		\caption{}\label{icon_nat}
	\end{subfigure}
\hspace{.5cm}
	\begin{subfigure}[t]{14em}
\xymatrix{F(X)\odot F(Y)\ar[r]^-{\eta_X\odot \eta_Y}\ar[d]^{m_F}&G(X)\odot G(Y)\ar[d]^{m_G}
\\
F(X \odot Y)\ar[r]^-{\eta_{X \odot Y}}&G(X \odot Y)}
		\caption{}\label{icon_2_composition}
	\end{subfigure}
\hspace{.5cm}
	\begin{subfigure}[t]{8em}
\xymatrix{U_{F(A)}\ar@{=}[r]\ar[d]^-{i_F}&U_{G(A)}\ar[d]^-{i_G}
\\F(U_A)\ar[r]^{\eta_{U_A}}&G(U_A)}
		\caption{}\label{icon_2_unit}
	\end{subfigure}
\caption{Commutative diagrams for a vertical natural isomorphism.}\label{fig:base_change_0}
\end{figure}

In our examples, the source bicategory will be a 1-category, which has no nontrivial 2-cells. Therefore we can dispense with \cref{icon_nat} and focus on \cref{icon_2_unit,icon_2_composition}.

\begin{defn}\label{df_sobc_2}  
Let $\sB$ be a bicategory with a system of base change objects  indexed by $\bS$, and suppose that $\bS$ is a cartesian monoidal 1-category.  A {\bf compatible shadowed $n$-Fuller structure} is a shadowed $n$-Fuller structure on $\sB$ (\cref{df:shadowed_nfuller}) where 
\begin{itemize}
\item the twisting object $T_{(B_i)}$ is chosen to be the base change 1-cell $T_{(B_i)}=[\prod B_i\xto{\Gamma} \prod B_{i+1}]$, and 
	\item if  $\prod$ denotes a fixed model for the $n$-fold product in $\bS$, there is  a vertical natural isomorphism $\pbc$ filling the square of pseudofunctors
	\[ \xymatrix{
		\bS^{\times n} \ar[r]^-\prod \ar[d]_-{[]} & \bS \ar[d]^-{[]} \\
		\sB^{\times n} \ar[r]^-\boxtimes & \sB
	} \]
	such that the diagram in  \cref{fig:bc_compatibility} commutes. 

\end{itemize}
\end{defn}

\begin{figure}[h]
\centerline{
		\xymatrix@C=2pt{
		\bcr{\prod B_i}{\cong}{\prod B_{i+1}} \odot \left(\boxtimes \bcr{E_i}{p_i}{B_i}\right) \ar[dd]_-{\id\odot \pbc}^-\cong \ar[r]^-\vartheta_-\cong 
		&
		\left(\boxtimes \bcr{E_{i+1}}{p_{i+1}}{B_{i+1}}\right)\odot \bcr{\prod E_i}{\cong}{\prod E_{i+1}} \ar[d]^-{\pbc\odot\id}_-\cong \\
		&
		\bcr{\prod E_{i+1}}{\prod p_{i+1}}{\prod B_{i+1}} \odot \bcr{\prod E_i}{\cong}{\prod E_{i+1}}\ar[d]_-\cong ^-{m_{[]}}
		\\		
		\bcr{\prod B_i}{\cong}{\prod B_{i+1}} \odot \bcr{\prod E_i}{\prod p_i}{\prod B_i} \ar[r]^-{m_{[]}}_-\cong& \bcr{\prod E_i}{\shift \circ \prod p_i}{\prod B_{i+1}} 
	}
	}
		\caption{The final compatibility axiom.}\label{fig:bc_compatibility}
\end{figure}

\begin{prop}\label{prop:base_change_objects_2}
The base-change objects for the bicategory $\Osp^c/\Top$ from \cref{prop:base_change_objects} are compatible with the shadowed $n$-Fuller structure from \cref{prop:n_fuller} in this sense.
\end{prop}

As in \cref{sec:fuller}, we will prove this result in stages but they are shorter here.

\begin{lem}\label{base_change:iso}There is a 
vertical natural isomorphism $\pbc$ filling the square of pseudofunctors
	\begin{equation}\label{eq:base_change:iso} \xymatrix{
		\Top^{\times n} \ar[r]^-\prod \ar[d]_-{[]} & \Top\ar[d]^-{[]} \\
	(\Osp^c/\Top)^{\times n} \ar[r]^-\boxtimes & \Osp^c/\Top
	} \end{equation}
	where $\prod$ denotes a fixed model for the $n$-fold product in $\Top$. 
\end{lem}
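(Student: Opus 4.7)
The plan is to follow the same rhythm as in the preceding lemmas: write down a rigid multi-span with which both composites in the square \eqref{eq:base_change:iso} can be identified, and then invoke rigidity both to produce $\pbc$ and to verify its coherence axioms.

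First, I would define the components of $\pbc$. Unravelling the definitions, the upper-right composite in \eqref{eq:base_change:iso} sends $(f_i\colon A_i\to B_i)$ to
\[ \bcr{\prod_i A_i}{\prod_i f_i}{\prod_i B_i}, \]
which is the action of the multi-span
\[ \xymatrix @R=5pt {
	&\prod_i A_i \ar[dl]_-{(\prod_i f_i,1)}\ar[dr]^-{\pi}\\
	\prod_i B_i \times \prod_i A_i && \prod_\emptyset.
}\]
The lower-left composite sends $(f_i)$ to $\boxtimes_i \bcr{A_i}{f_i}{B_i}$. By unpacking the definition of $\boxtimes$ from \cref{nfull_funct} as the external smash product pulled back along the canonical isomorphism $\prod_i (B_i \times A_i) \cong \prod_i B_i \times \prod_i A_i$, this is visibly given by the action of the same multi-span (after suppressing the canonical shuffle, as in \cref{rmk:suppressed_maps}). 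Since the map $(\prod_i f_i, 1)$ is injective, this multi-span is rigid by \cref{rigidity}, and this defines $\pbc$ as the unique isomorphism between these two actions.

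Next I would check that $\pbc$ is a vertical natural isomorphism of pseudofunctors. Naturality in the $n$-tuple of morphisms $(f_i)$ is an instance of \cref{rmk:single_iso}: both the source and target of $\pbc$ are functorial assignments of parametrized spectra with no nontrivial automorphisms (because the above span is rigid with no inputs), so any two natural transformations between them must coincide, and in particular the two evident composites around a naturality square agree. To check that $\pbc$ is compatible with the composition isomorphisms of the two pseudofunctors, I would consider an $n$-tuple of composable pairs $A_i \xto{f_i} B_i \xto{g_i} C_i$ and assemble the hexagon relating $m_{[]}$, $m_\boxtimes$, and two instances of $\pbc$. All three of the parametrized spectra appearing in this hexagon can be identified with the action of the rigid multi-span
\[ \xymatrix @R=5pt {
	&\prod_i A_i \ar[dl]_-{(\prod_i g_if_i,1)}\ar[dr]^-{\pi}\\
	\prod_i C_i \times \prod_i A_i && \prod_\emptyset,
}\]
so by rigidity the hexagon automatically commutes. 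The compatibility with $i_{[]}$ and $i_\boxtimes$ follows from rigidity of the same multi-span in the case $A_i = B_i$ with $f_i = \id$.

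The only mild subtlety I anticipate is bookkeeping the suppressed shuffle isomorphism $\prod_i (B_i \times A_i) \cong \prod_i B_i \times \prod_i A_i$ carefully enough to be confident that the two sides are truly instances of the same multi-span action. This is not conceptually hard, but it is where notational care is required; once the identification is set up, every diagram of 2-cells one might want to verify reduces by rigidity to the observation that it is a diagram of endomorphisms of a single object with no nontrivial automorphisms.
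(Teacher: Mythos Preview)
Your proposal is correct and follows essentially the same approach as the paper: define $\pbc$ via the rigid multi-span with apex $\prod_i A_i$ and left leg $(\prod_i f_i,1)$, then verify the coherence diagrams (\cref{fig:vert_odot,fig:vert_unit}) by observing that every object in them is isomorphic to the output of a rigid multi-span with no inputs, so rigidity forces the diagrams to commute. The only superfluous step is your separate ``naturality in the $(f_i)$'' paragraph: since the source bicategory $\Top^{\times n}$ is a 1-category, an icon has no naturality condition beyond the composition and unit axioms you already check.
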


Writing out \cref{df_iicon} in this case, \cref{base_change:iso} is claiming that if we compose the pseudofunctors around either route in the square \eqref{eq:base_change:iso}, we get the same map of 0-cells, and furthermore there are isomorphisms of 1-cells
	\[ \pbc\colon \boxtimes \bcr{A_i}{f_i}{B_i} \cong \bcr{\prod A_i}{\prod f_i}{\prod B_i} \]
that make the diagrams in \cref{fig:base_change} commute. There is no ``naturality'' condition to check for $\pbc$ because $\Top^{\times n}$ is a 1-category, so it has no nontrivial 2-cells.

\begin{figure}[h]
	\begin{subfigure}[t]{28em}
		\resizebox{\textwidth}{!}{
			\xymatrix{
	\bcr{\prod_i B_i}{\prod_i g_i}{\prod_i C_i} \odot \bcr{\prod_i A_i}{\prod_i f_i}{\prod_i B_i} \ar@{<->}[r]^-{\pbc\odot\pbc} \ar[d]^-{m_{[]}}
	&
	\left(\boxtimes_i \bcr{B_i}{g_i}{C_i}\right) \odot \left(\boxtimes_i \bcr{A_i}{f_i}{B_i}\right) \ar[d]^-{m_\boxtimes}
	\\
	\bcr{\prod_i A_i}{\prod_i g_i \circ \prod_i f_i}{\prod_i C_i} \ar@{=}[d]^-{m_{\prod}}
	&
	\boxtimes_i \left(\bcr{B_i}{g_i}{C_i} \odot \bcr{A_i}{f_i}{B_i}\right) \ar[d]^-{m_{[]}}
	\\
	\bcr{\prod_i A_i}{\prod_i (g_i \circ f_i)}{\prod_i C_i} \ar@{<->}[r]^-\pbc
	&
	\boxtimes_i \bcr{A_i}{g_i \circ f_i}{C_i}
}
		}
		\caption{The vertical composition axiom }\label{fig:vert_odot}
	\end{subfigure}
	\begin{subfigure}[t]{14em}
		\resizebox{\textwidth}{!}{
			\xymatrix{
	U_{\prod_i A_i} \ar@{=}[r] \ar@{=}[d]^-{i_{[]}}
	&
	U_{\prod_i A_i} \ar[d]^-{i_\boxtimes}
	\\
	\bcr{\prod_i A_i}{\id}{\prod_i A_i} \ar@{=}[d]^-{i_{\prod}}
	&
	\boxtimes_i U_{A_i} \ar@{=}[d]^-{i_{[]}}
	\\
	\bcr{\prod_i A_i}{\id}{\prod_i A_i} \ar@{<->}[r]^-\pbc
	&
	\boxtimes_i \bcr{A_i}{\id}{A_i}
}
		}
		\caption{The vertical unit axiom }\label{fig:vert_unit}
	\end{subfigure}
	\caption{Commutative diagrams for the vertical natural isomorphism in a system of base change objects.}\label{fig:base_change}
\end{figure}

In \cref{fig:base_change}, all maps that happen to be equalities are notated as such. Some of these maps are not required by \cref{df_sobc} to be equalities, but they are equalities in this construction of the bicategory  $\Osp^c/\Top$.

\begin{proof}

By \cref{nfull_funct}, we know that the two composites of pseudofunctors in \eqref{eq:base_change:iso} agree on 0-cells, taking each $n$-tuple of spaces $(A_i)$ to the product $\prod A_i$. We take $\pbc$ to be the unique isomorphism
	\[ \pbc\colon \boxtimes \bcr{A_i}{f_i}{B_i} \cong \bcr{\prod A_i}{\prod f_i}{\prod B_i} \]
	in $\Osp(\prod B_i, \prod A_i)$. It exists because both sides can be identified with the output of the action of the rigid multi-span
	\[ \xymatrix @R=5pt { 
		&\prod_i A_i\ar[dl]_-{(\prod f_i,1)}\ar[dr]^-{\pi}
		\\
		\prod_i B_i \times \prod_i A_i && \prod_\emptyset.
	}\]

As in the proof of \cref{base_change_functor}, the diagrams of isomorphisms in \cref{fig:vert_unit,fig:vert_odot} must commute because each term in the diagram has no nontrivial automorphisms.
\end{proof}

\begin{lem}\label{base_change:equality}
	Using the equality $T_{(B_i)} = \bcr{\prod B_i}{\cong}{\prod B_{i+1}}$ with the structure defined in \cref{prop:n_fuller,prop:base_change_objects},  the diagram in \cref{fig:bc_compatibility} commutes.
\end{lem}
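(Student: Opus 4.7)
The strategy mirrors the proofs of Lemmas \ref{base_change_functor} and \ref{base_change:iso}: I would establish both the strict equality and the commutativity of Figure \ref{fig:bc_compatibility} by identifying the relevant 1-cells with the actions of rigid multi-spans and invoking Theorem \ref{rigidity}.

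For the equality $T_{(B_i)} = \bcr{\prod B_i}{\cong}{\prod B_{i+1}}$, I would compare the defining multi-spans. The object $T_{(B_i)}$ is defined by \eqref{defn_Ta}, whose left leg is $(\shift,1)\colon \prod_i B_i \to \prod_i B_{i+1} \times \prod_i B_i$ and whose right leg is $\pi\colon \prod_i B_i \to \prod_\emptyset$. The base-change 1-cell $\bcr{\prod B_i}{\shift}{\prod B_{i+1}}$ is defined by \eqref{span:base_change} with $f = \shift$, giving a multi-span with precisely the same two legs. Hence the two 1-cells are literally equal as objects of $\Osp^c(\prod_i B_{i+1} \times \prod_i B_i)$; no 2-cell needs to be chosen.

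For the commutativity of Figure \ref{fig:bc_compatibility}, I would argue by rigidity. Every 2-cell in the diagram was constructed in the previous lemmas by comparison with the action of a rigid multi-span (the maps $m_\boxtimes$, $i_\boxtimes$, $\vartheta$, $\tau$ from Section \ref{sec:fuller}, together with $m_{[]}$, $i_{[]}$, $\pbc$ from this subsection). The source and target of the diagram are therefore both isomorphic to the action of a single rigid multi-span built out of the spaces $A_i$ with legs involving $\shift$, $\pi$, and appropriate diagonals. By Theorem \ref{rigidity}, this action admits only the identity as a natural automorphism, so any two natural isomorphisms between functors identified with it must coincide; this is the form of the argument used throughout \cref{sec:point_set}, and it is a direct analogue of \cref{rmk:single_iso}.

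The main obstacle will be writing down the single rigid multi-span that simultaneously encodes every term appearing in Figure \ref{fig:bc_compatibility} and verifying that the suppressed associator and symmetry isomorphisms of $\Top$ (as in \cref{rmk:suppressed_maps}) paste together consistently along both routes. Once that multi-span is correctly identified, no further calculation is required: the entire diagram collapses to an assertion about the unique automorphism of its action, which must be the identity.
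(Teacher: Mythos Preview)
Your proposal is correct and follows essentially the same approach as the paper: the equality comes from observing that the defining multi-span \eqref{defn_Ta} for $T_{(B_i)}$ is literally the instance of \eqref{span:base_change} with $f=\shift$, and the commutativity of \cref{fig:bc_compatibility} follows because every vertex is isomorphic to a single rigid base-change object (the $n=0$ case of \cref{rmk:single_iso}), so any two composites of isomorphisms between them must agree. A minor correction: the maps appearing in \cref{fig:bc_compatibility} are $\vartheta$, $\pbc$, and $m_{[]}$ (not $m_\boxtimes$, $i_\boxtimes$, or $\tau$), and the spaces involved are $B_i$ and $E_i$ rather than $A_i$; also, your ``main obstacle'' is not really an obstacle, since the common rigid object is simply $\bcr{\prod E_i}{\shift \circ \prod p_i}{\prod B_{i+1}}$ and need not be laboriously identified.
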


\begin{proof}
In the proof of \cref{nfull_nattran}, $T_{(B_i)}$ is defined to be the result of the action of the multi-span in \eqref{defn_Ta}.  This is the action of the multi-span in \eqref{span:base_change} when $f$ is taken to be $\shift$.  

As in the proof of \cref{base_change_functor}, the diagram in \cref{fig:bc_compatibility} commutes because each term in the diagram has no nontrivial automorphisms.
\end{proof}

\cref{prop:n_fuller,prop:base_change_objects,base_change:iso,base_change:equality} together imply \cref{prop:base_change_objects_2}.

\section{Deformable functors}
\label{sec:deform}\label{descend}

There are two bicategories of parametrized spectra: a point-set bicategory $\Osp^c/\Top$, in which the 2-cells are actual maps of parametrized spectra, and a homotopy bicategory $\Ex = \Ho\Osp^c/\Top$, in which the 2-cells are morphisms in the homotopy category. To pass from the first to the second, we need to replace the product $\odot$ by a derived product, and similarly for the other operations.

To do this we will use the framework of right-deformable functors from \cite[40.2]{dhks}, see also \cite[\S 3.1]{convenient}. In this section we recall how the framework works in general.

\begin{defn}\label{right_deformable}
	Let $\bC$ and $\bD$ be 1-categories with classes of weak equivalences that satisfy the 2 out of 3 property. A functor $F\colon \cat{C} \to \cat{D}$ is \textbf{right-deformable} if the following data exists:
\begin{itemize}
	\item a \emph{full} subcategory $\cat{A} \subseteq \cat{C}$ 
	such that $F$ preserves weak equivalences on $\cat{A}$,
	\item a  functor $R\colon \cat{C} \to \cat{C}$ landing in $\cat{A}$, and
	\item a natural transformation $\id \to R$ through weak equivalences.   In other words, a natural equivalence $X \simar RX$.
	
\end{itemize}
\end{defn}

In particular, if $\cat C$ is a model category and $F$ is right Quillen, we could take $\cat A$ to be the fibrant objects and $R$ to be a fibrant replacement functor. To emphasize that we will \emph{not} necessarily use fibrant replacement from a model structure, we will instead call the objects of $\cat{A}$ the {\bf \fibrant  objects}, and we say that $R$ is a {\bf \fibrant replacement} functor.

	The \textbf{right-derived functor} of a right-deformable functor $F$ is defined as
	\[ \R F(X) \coloneqq F(RX). \]
The right-derived functor preserves weak equivalences, and therefore defines a map of homotopy categories \[\Ho\cat{C} \to \Ho\cat{D}.\] 

We say ``the'' right-derived functor of $F$, because up to canonical weak equivalence, $\R F$ does not depend on the choice of subcategory $\cat{A}$ or the \fibrant replacement functor $R$. (See \cite[41.1, 41.2]{dhks}, \cite[6.4]{riehl_basic}, \cite[Prop 3.1.1]{convenient}, or \cite[3.5.3]{spectra_book}.) In particular, if $F$ preserves weak equivalences on all of $\cat{C}$, then its right-derived functor is canonically identified with $F$ itself:
\[ \R F \simeq F. \]

\subsection{Natural transformations}
For our applications it is not enough to have individual derived functors.  We need to understand composites of derived functors and natural transformations between them. 

\begin{defn}[{\cite[42.3]{dhks}, \cite[Def 3.2.9]{convenient}}]\label{coherently_right_deformable}
	A list of composable, right-deformable functors
	\begin{equation}\label{composable_functors}
\xymatrix{
	\cat C \ar@{=}[r] & \cat C_0 \ar[r]^-{F_1} & \cat C_1 \ar[r]^-{F_2} & \ldots \ar[r]^-{F_n} & \cat C_n \ar@{=}[r] & \cat D
}
\end{equation}
	is \textbf{coherently right-deformable} if:
\begin{itemize}
\item each functor $F_i$ is right-deformable, meaning the following data exists for each $1 \leq i \leq n$: 
\begin{itemize}
\item a \emph{full} subcategory $\cat A_{i-1} \subseteq \cat C_{i-1}$ such that $F_i$ preserves weak equivalences on $\cat{A}_{i-1}$, 
\item a functor $R_{i-1}\colon \bC_{i-1}\to \bC_{i-1}$ whose image is contained in $\bA_{i-1}$, and 
\item  a natural transformation $\id\to R_{i-1}$ through weak equivalences, 
\end{itemize}
\end{itemize}
and 
\begin{itemize}
\item $F_i(\cat{A}_{i-1})\subset \cat{A}_i$.
\end{itemize}
\end{defn}

We take this approach to derived functors because it gives us good control over the compositions of functors and the natural transformations between them.  The compatibility is described in detail in \cite[Prop 3.2.4, Lem 3.2.10]{convenient}; the relevant statements for us are the following results.
\begin{prop}\label{rd_descends}
	For any list of coherently right-deformable functors, 
	the composite is right-deformable. There is a canonical equivalence of functors (or isomorphism of functors on the homotopy category)
	\[ \R(F_n \circ \cdots \circ F_1) \xrightarrow{\simeq} \R F_n \circ \cdots \circ \R F_1. \]
\end{prop}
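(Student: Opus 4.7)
The plan is to construct the deformation for $F = F_n \circ \cdots \circ F_1$ directly from the coherent data, then define the comparison map by successively inserting \fibrant replacements and argue inductively that each insertion is a weak equivalence.

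First I would take as the full subcategory of \fibrant objects for $F$ the subcategory $\cat{A}_0 \subseteq \cat{C}$, and as the \fibrant replacement the functor $R_0$. The natural transformation $\id \to R_0$ is already through weak equivalences, so only the preservation property requires checking. For this I would argue by induction on $k$ that $F_k \circ \cdots \circ F_1$ preserves weak equivalences on $\cat{A}_0$ and lands in $\cat{A}_k$. The inductive step is immediate: if $F_{k-1} \circ \cdots \circ F_1(X) \in \cat{A}_{k-1}$ and $F_{k-1} \circ \cdots \circ F_1$ preserves weak equivalences on $\cat{A}_0$, then the coherence conditions $F_k(\cat{A}_{k-1}) \subseteq \cat{A}_k$ and ``$F_k$ preserves weak equivalences on $\cat{A}_{k-1}$'' close the induction. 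This shows $F$ is right-deformable with $\R F(X) = F_n \cdots F_1 R_0 X$.

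Next, the comparison map is built from the natural transformations $\id \to R_i$ by inserting them one at a time into the composite. Explicitly, I would write the zig-zag
\[ F_n \cdots F_1 R_0 X \;\longrightarrow\; F_n \cdots F_2 R_1 F_1 R_0 X \;\longrightarrow\; \cdots \;\longrightarrow\; F_n R_{n-1} F_{n-1} \cdots R_1 F_1 R_0 X, \]
where the $i$-th arrow is obtained by applying $F_n \circ \cdots \circ F_{i+1}$ to the transformation $\id \to R_i$ evaluated at $F_i \cdots F_1 R_0 X$. The key claim is that each arrow is a weak equivalence. The target of the $i$-th evaluation of $\id \to R_i$ lies in $\cat{A}_i$ by construction, and the earlier induction shows $F_i \cdots F_1 R_0 X \in \cat{A}_i$ as well, so applying $F_{i+1}$ gives a weak equivalence between objects of $\cat{A}_{i+1}$, and iterating the preservation hypothesis shows the full composite remains a weak equivalence. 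The right-hand side is exactly $\R F_n \circ \cdots \circ \R F_1(X)$.

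The main obstacle I anticipate is not the construction but the assertion that the equivalence is \emph{canonical}. In principle different choices of deformation data for $F$ could yield different-looking comparison maps. This is handled by appealing to the uniqueness of right-derived functors up to canonical weak equivalence (the statement cited from \cite[41.1, 41.2]{dhks} and \cite[Prop 3.1.1]{convenient}): both $\R F$ and $\R F_n \circ \cdots \circ \R F_1$ are right-derived functors of $F$, since the iterated replacement $R_{n-1} \cdots R_1 R_0$ assembled above also provides a valid \fibrant replacement landing in $\cat{A}_0$, and the intermediate weak equivalences fit into the standard zig-zag identifying the two. Naturality in $X$ is immediate from naturality of each $\id \to R_i$.
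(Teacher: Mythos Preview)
The paper does not actually prove this proposition; it is quoted from \cite[42.3]{dhks} and \cite[Prop 3.2.4, Lem 3.2.10]{convenient}, and the explicit description of the comparison map you give is exactly the one recorded later in Remark~\ref{just_put_in_a_ton_of_rs}. Your overall strategy---show $\cat{A}_0$ with $R_0$ deforms the composite, then insert the $R_i$ one at a time and check each insertion is a weak equivalence---is the standard one and is correct.

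Two small points to tighten. First, in your description of the $i$-th arrow you say it is $\id \to R_i$ evaluated at $F_i \cdots F_1 R_0 X$, but after the first insertion the intermediate $R_j$'s are already present: the correct source is $F_i R_{i-1} F_{i-1} \cdots R_1 F_1 R_0 X$. The argument that this object lies in $\cat{A}_i$ is the same induction, so the conclusion is unaffected. Second, your canonicity paragraph stumbles: the composite $R_{n-1} \cdots R_1 R_0$ is not a functor on $\cat{C}_0$ (each $R_i$ lives on $\cat{C}_i$), so you cannot literally exhibit $\R F_n \circ \cdots \circ \R F_1$ as $F$ composed with a \fibrant replacement on $\cat{C}_0$. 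The clean fix is to invoke the universal property directly: $\R F$ is initial among homotopical functors receiving a map from $F$, and $\R F_n \circ \cdots \circ \R F_1$ is such a functor (your zig-zag, precomposed with $F \to FR_0$, exhibits the map from $F$), so the comparison map is the unique one in $\Ho$ under $F$. That it is an equivalence is exactly what your explicit construction shows.
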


\begin{prop}\label{rd_descends_1.5}
	Every natural transformation between right-deformable functors 
\[\eta\colon F \Rightarrow G\] induces a canonical natural transformation of right-derived functors 
\[\ti\eta \colon \R F \Rightarrow \R G\]
 as functors on the homotopy category.
\end{prop}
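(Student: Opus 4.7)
The plan is to define the components $\tilde\eta_X$ pointwise as morphisms in $\Ho\cat{D}$ via a canonical zigzag, verify naturality in $X$, and then check independence of the chosen \fibrant replacements. Fix \fibrancy data $(R_F, \nu_F)$ and $(R_G, \nu_G)$ for $F$ and $G$, with $\nu_F\colon \id \to R_F$ and $\nu_G\colon \id \to R_G$ the natural weak equivalences coming from \cref{right_deformable}.

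For each $X \in \cat{C}$, I would define $\tilde\eta_X \colon \R F(X) \to \R G(X)$ in $\Ho\cat{D}$ to be the composite
\[
F(R_F X) \xrightarrow{\eta_{R_F X}} G(R_F X) \xrightarrow{G(\nu_{G, R_F X})} G(R_G R_F X) \xleftarrow{G(R_G \nu_{F, X})} G(R_G X),
\]
where the rightmost arrow is to be inverted in $\Ho\cat{D}$. To see that it is invertible, I would examine the naturality square for $\nu_G$ applied to $\nu_{F,X}$: three of its sides ($\nu_{F,X}$, $\nu_{G,X}$, and $\nu_{G,R_F X}$) are weak equivalences, so by the 2-out-of-3 property its fourth side $R_G\nu_{F,X}$ is a weak equivalence between objects of $\cat{A}_G$; since $G$ preserves weak equivalences on $\cat{A}_G$, its image $G(R_G\nu_{F,X})$ is a weak equivalence in $\cat{D}$, hence an isomorphism in $\Ho\cat{D}$.

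Naturality in $X$ would then follow by decomposing the required square in $\Ho\cat{D}$ into three point-set squares, one per arrow of the zigzag, commuting by naturality of $\eta$, of $\nu_G$ after applying $G$ to $R_F(f)$, and of $\nu_F$ after applying $G\circ R_G$ to $f$. For independence of the chosen deformations, a second pair $(R'_F, R'_G)$ produces canonical isomorphisms $\R F \simeq F\circ R'_F$ and $\R G \simeq G\circ R'_G$ from the independence statement cited just after \cref{right_deformable}; I would verify these intertwine the two zigzag constructions by introducing a common refinement built from all four replacements and noting that each resulting subdiagram commutes by naturality of $\eta$ and of the various $\nu$'s.

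I expect the main obstacle to be the last step, because the composite of two \fibrant replacement functors need not land in both $\cat A_F$ and $\cat A_G$, so the common refinement has to be assembled carefully in $\Ho\cat{D}$ rather than at the point-set level. The cleanest route is to invoke the universal property of right-derived functors from \cite{dhks} to characterize $\tilde\eta$ as the unique natural transformation of derived functors compatible with $\eta$ under the canonical comparison $F \to \R F\circ\gamma$; the zigzag above then furnishes an explicit representative for this universal map and the independence check reduces to the universal property itself.
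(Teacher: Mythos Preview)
Your construction is correct, and you yourself identify the paper's approach in your final paragraph: the paper does not build $\ti\eta$ via an explicit zigzag at all, but simply invokes the universal property of $\R F$ as the initial equivalence-preserving functor under $F$ (citing \cite[41.1, 41.2]{dhks} and \cite[6.4]{riehl_basic}). Since $\R G$ preserves equivalences and receives a map from $F$ via $F \xrightarrow{\eta} G \to \R G$, the universal property yields a unique $\ti\eta\colon \R F \to \R G$ making the evident square commute. This single stroke handles existence, naturality, and independence of choices simultaneously, so your careful pointwise construction and the common-refinement argument for independence become unnecessary. Your zigzag is a perfectly good explicit representative of this universal map (and is essentially what one would unwind from the proof of the universal property), but the paper treats the universal property as the definition rather than as a cleanup step.
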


This canonical natural transformation is characterized as the unique map making the following square commute, as functors $\cat C \to \Ho\cat D$.
\[ \xymatrix{
	F \ar[d] \ar[r]^-\eta & G \ar[d] \\
	\R F \ar@{-->}[r]^-{\ti\eta} & \R G.
} \]
The uniqueness of $\ti\eta$ follows from the universal property of $\R F$. Since $\R F$ is the initial functor under $F$ that preserves equivalences, any map $F \to \R G$ factors uniquely through $\R F$. See e.g. \cite[41.1, 41.2]{dhks}, \cite[6.4]{riehl_basic}, or \cite[3.5.2]{spectra_book}.

As a result, the assignment $\eta \mapsto \ti\eta$ respects vertical compositions of natural transformations, and also respects horizontal compositions when the functors are coherently right-deformable:
\[ \xymatrix @C=3em{
	F \ar[d] \ar@/^1em/[rr]^-{\upsilon \circ \eta} \ar[r]_-\eta & G \ar[r]_-{\upsilon} \ar[d] & H \ar[d] &&
	F_2 \circ F_1 \ar[d] \ar[r]^-{\eta_2 * \eta_1} & G_2 \circ G_1 \ar[d] \\
	\R F \ar@{-->}@/_1em/[rr]_-{(\widetilde{\upsilon \circ \eta}) = \ti\upsilon \circ \ti\eta} \ar@{-->}[r]^-{\ti\eta} & \R G \ar@{-->}[r]^-{\ti\upsilon} & \R H &&
	\R (F_2 \circ F_1) \ar[d]_-\simeq \ar@{-->}[r]^-{(\widetilde{\eta_2 * \eta_1})} & \R (G_2 \circ G_1) \ar[d]^-\simeq \\
	&&&& \R F_2 \circ \R F_1 \ar@{-->}[r]^-{\ti\eta_2 * \ti\eta_1} & \R G_2 \circ \R G_1
} \]
In other words, passage to right-derived functors is a partially-defined 2-functor on categories, functors, and natural transformations.

\begin{cor}\label{rd_descends_2}
	For coherently right-deformable functors, any isomorphism 
\[ G_m \circ \ldots \circ G_1 \cong F_n \circ \ldots \circ F_1\] determines a canonical isomorphism in the homotopy category
\[ 
	\R G_m \circ \ldots \circ \R G_1 \simeq \R F_n \circ \ldots \circ \R F_1. \]
\end{cor}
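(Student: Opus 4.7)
The plan is to chain together the two preceding propositions. Write $F = F_n \circ \cdots \circ F_1$ and $G = G_m \circ \cdots \circ G_1$, and let $\eta \colon G \xto{\cong} F$ denote the given natural isomorphism of point-set functors.

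First I would invoke \cref{rd_descends} for each of the two chains. This simultaneously certifies that $F$ and $G$ are themselves right-deformable and produces canonical equivalences
\[ \R F \simeq \R F_n \circ \cdots \circ \R F_1, \qquad \R G \simeq \R G_m \circ \cdots \circ \R G_1 \]
as functors $\Ho\cat{C} \to \Ho\cat{D}$. Now that $F$ and $G$ are in hand as right-deformable functors, feeding the natural transformation $\eta$ to \cref{rd_descends_1.5} produces a canonical natural transformation $\ti\eta \colon \R G \Rightarrow \R F$ on the homotopy category.

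The central point to check is that $\ti\eta$ is automatically an isomorphism. Concretely, under the identifications $\R G(X) = G(RX)$ and $\R F(X) = F(RX)$, the component $\ti\eta_X$ is given by $\eta_{RX}$, which is invertible because $\eta$ is a pointwise isomorphism. Equivalently, one reads this off the commutative square in $\Ho\cat{D}$ defining $\ti\eta$: three of its four sides (the top $\eta$ and the two verticals to the derived functors) are isomorphisms, so the fourth must be as well. Composing $\ti\eta$ with the canonical equivalences supplied by \cref{rd_descends} then yields the desired isomorphism
\[ \R G_m \circ \cdots \circ \R G_1 \simeq \R F_n \circ \cdots \circ \R F_1. \]

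I do not anticipate any real obstacle: every step is immediate from the cited propositions, once one has noted that the two chains being coherently right-deformable is precisely what makes \cref{rd_descends} applicable. The one mildly delicate bookkeeping item is that the final isomorphism deserves to be called \emph{canonical}, but this follows from the partial 2-functoriality of $\R$ recalled just before the corollary, which guarantees that both the equivalences of \cref{rd_descends} and the induced transformation $\ti\eta$ are themselves canonical and compatible with horizontal and vertical composition.
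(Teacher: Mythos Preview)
Your proposal is correct and matches the paper's intended approach: the corollary is stated without a separate proof, being immediate from \cref{rd_descends} and \cref{rd_descends_1.5} together with the partial 2-functoriality discussed between them, and you have spelled out exactly that derivation.

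One small caution on your alternative invertibility argument: the claim that ``three of its four sides are isomorphisms'' is not quite right, because the vertical comparison maps $F \Rightarrow \R F$ and $G \Rightarrow \R G$ are not isomorphisms in general (the point-set functor $F$ need not preserve the weak equivalence $X \to RX$). Your first argument is the sound one; alternatively, and perhaps most cleanly, apply the vertical-composition compatibility recorded just after \cref{rd_descends_1.5} to $\eta$ and $\eta^{-1}$ to see that $\ti\eta \circ \widetilde{\eta^{-1}} = \widetilde{\id} = \id$, hence $\ti\eta$ is invertible.
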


\begin{cor}\label{rd_descends_3}
	Any commuting diagram between composites of coherently right-deformable functors induces a commuting diagram between the composites of right-derived functors.
\end{cor}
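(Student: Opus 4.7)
The plan is to reduce Corollary \ref{rd_descends_3} to the 2-functorial properties of the passage $\eta \mapsto \ti\eta$ recorded in the discussion immediately preceding the corollary. First, I would unpack the hypothesis: a commuting diagram of natural transformations between composites of coherently right-deformable functors is precisely a family of natural transformations together with equations of the form
\[ \upsilon_k \circ \cdots \circ \upsilon_1 = \eta_\ell \circ \cdots \circ \eta_1, \]
where each factor is either a natural transformation between composites, an identity, or a horizontal composite (whiskering) of such a transformation with identity transformations of further functors.

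Next, I would apply Proposition \ref{rd_descends_1.5} edge-by-edge to obtain a derived natural transformation $\ti\eta \colon \R F \Rightarrow \R G$ for each natural transformation $\eta \colon F \Rightarrow G$ appearing in the diagram; the source and target are identified with the expected composites $\R F_n \circ \cdots \circ \R F_1$ via Proposition \ref{rd_descends} and Corollary \ref{rd_descends_2}. The crucial input is the statement, displayed just before Corollary \ref{rd_descends_3}, that this assignment respects vertical composition ($\widetilde{\upsilon \circ \eta} = \ti\upsilon \circ \ti\eta$) and horizontal composition ($\widetilde{\eta_2 * \eta_1}$ agrees with $\ti\eta_2 * \ti\eta_1$ under the identifications of Proposition \ref{rd_descends}). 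Applying these compatibilities termwise transports each equation in the original diagram to the corresponding equation in the derived diagram, which is exactly the assertion that the derived diagram commutes.

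The main obstacle, if any, is purely bookkeeping rather than substance: at every vertex one must use the canonical identification $\R(F_n \circ \cdots \circ F_1) \simeq \R F_n \circ \cdots \circ \R F_1$ consistently, so that horizontal and vertical composites in the derived 2-category line up with those in the original. This is guaranteed by the coherent right-deformability hypothesis, which ensures both that each composite appearing in the diagram is itself right-deformable (Proposition \ref{rd_descends}) and that the comparison equivalences are canonical. In short, Corollary \ref{rd_descends_3} is the formal statement that the partially-defined 2-functor ``right derivation'' carries commuting diagrams to commuting diagrams, a property it possesses by construction via the uniqueness clause in Proposition \ref{rd_descends_1.5}.
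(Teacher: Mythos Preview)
Your proposal is correct and follows essentially the same approach as the paper: both argue that passage to right-derived functors is a partially-defined 2-functor (respecting vertical and horizontal composition, via the uniqueness clause in Proposition~\ref{rd_descends_1.5}), and hence carries commuting diagrams to commuting diagrams. The paper does not spell this out as you do but instead illustrates the statement with a single example of a commuting square; your more general unpacking is the natural formalization of that illustration.
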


This is easiest to explain with an example. Suppose we have a commuting square of functors of the form
\[ \xymatrix{
	F_2 \circ F_1 \ar[d] \ar[r] & G_3 \circ G_2 \circ G_1 \ar[d] \\
	H_1 \ar[r] & J_2 \circ J_1
} \]
and each of the lists $\{(F_1,F_2),(G_1,G_2,G_3),(H_1),(J_1,J_2)\}$ is coherently right-deformable. Then \cref{rd_descends_3} says this induces a canonical diagram in the homotopy category
\[ \xymatrix{
	\R F_2 \circ \R F_1 \ar[d] \ar[r] & \R G_3 \circ \R G_2 \circ \R G_1 \ar[d] \\
	\R H_1 \ar[r] & \R J_2 \circ \R J_1
} \]
that also commutes. If, furthermore, the map $F_2 \circ F_1 \to G_3 \circ G_2 \circ G_1$ was a composite of two maps $F_2 \to G_3 \circ G_2$ and $F_1 \to G_1$, the corresponding map between right-derived functors would be a composite as well.
	
The upshot is the following. To verify that the functors, natural transformations, and coherences described in \cref{sec:point_set} descend to the homotopy category, it is enough to show that every list of functors that appears is coherently right-deformable. For instance, the expression $(X \odot Y) \odot Z$ corresponds to the list
\[\xymatrix @R=0.5em{
	\Osp^c(A \times B) \times \Osp^c(B \times C) \times \Osp^c(C \times D) \ar[r]^-{\odot \times \id} & \Osp^c(A \times C) \times \Osp^c(C \times D) \ar[r]^-\odot & \Osp^c(A \times D).
} \]
Once we verify that these lists are coherently right-deformable, we get the needed isomorphisms in the homotopy category from \cref{rd_descends_1.5}. The coherence between these isomorphisms follows from \cref{rd_descends_3}.

\begin{rmk}\label{just_put_in_a_ton_of_rs}
The canonical isomorphisms produced by this approach admit an explicit description. If we model composites of right-derived functors by inserting copies of $R_i$ before every functor, then the canonical isomorphisms between composites of right-derived functors are defined by removing all of the intermediate copies of $R_i$ from each composite, keeping the $R_i$s on the initial inputs, applying  the point-set isomorphism, then re-inserting the intermediate copies of $R_i$.  For an example, see \cref{just_put_in_a_ton_of_ps}.
\end{rmk}

\subsection{Extending \fibrant objects}\label{extending_radiance}

In many examples we have some ``default" collection of \fibrant objects $\cat A_i \subseteq \cat C_i$, but we need to extend the collection to a slightly larger full subcategory that will contain the image of $F(\cat A_{i-1})$. In this section we formalize this extension procedure.
\begin{lem}\label{source_of_confusion}
Let $\bA \subseteq \bC$ be a collection of \fibrant objects and $R\colon \bC \to \bC$ a \fibrant replacement for a right-deformable functor $F\colon \bC \to \bD$.

For any collection of objects $\{X_\alpha\}$, $X_\alpha\notin \bA$, if each of the maps
\[F(X_\alpha)\to F(RX_\alpha)\]
 is a weak equivalence, then the full subcategory  $\bA'$ whose objects are the $X_\alpha$ and the objects of $\bA$, is also a collection of \fibrant objects for $F$.
\end{lem}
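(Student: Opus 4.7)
The plan is to verify all three parts of Definition \ref{right_deformable} for the enlarged subcategory $\bA'$, with $R$ and the natural weak equivalence $\id \to R$ reused from the original data. Since $\bA \subseteq \bA'$, the functor $R$ still lands in $\bA'$, and the natural transformation $\id \to R$ is still through weak equivalences. So the only nontrivial point is that $F$ preserves weak equivalences between objects of $\bA'$.

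First I would fix a weak equivalence $f\colon X \to Y$ with $X, Y \in \bA'$, and consider the naturality square
\[ \xymatrix{
	X \ar[d] \ar[r]^-f & Y \ar[d] \\
	RX \ar[r]^-{Rf} & RY
} \]
of the natural transformation $\id \to R$. Applying $F$ gives
\[ \xymatrix{
	F(X) \ar[d] \ar[r]^-{F(f)} & F(Y) \ar[d] \\
	F(RX) \ar[r]^-{F(Rf)} & F(RY).
} \]
The crux is to show that both vertical maps in the lower square are weak equivalences in $\bD$. Case split on whether each endpoint is in $\bA$ or is one of the new objects $X_\alpha$. If $X \in \bA$, then $X \to RX$ is a weak equivalence between objects of $\bA$ (since $R$ lands in $\bA$), so $F(X) \to F(RX)$ is a weak equivalence by the original radiance hypothesis on $\bA$. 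If $X = X_\alpha$ is one of the new objects, then $F(X_\alpha) \to F(RX_\alpha)$ is a weak equivalence by hypothesis. Either way the vertical maps are weak equivalences.

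Next, $RX$ and $RY$ both lie in $\bA$, and $Rf$ is a weak equivalence by the 2-out-of-3 property applied in $\bC$ to the square above (the two vertical maps $X \to RX$ and $Y \to RY$ and the horizontal map $f$ are all weak equivalences, so $Rf$ is too). Hence $F(Rf)$ is a weak equivalence by the assumption on $\bA$. Finally, applying 2-out-of-3 in $\bD$ to the $F$-square yields that $F(f)$ is a weak equivalence, as desired.

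The argument is essentially formal and the only mildly delicate point is the case analysis on the endpoints together with the dual use of 2-out-of-3 (once in $\bC$ to upgrade $f$ to a weak equivalence $Rf$ in $\bA$, and once in $\bD$ to pull the conclusion back to $F(f)$); no step presents a real obstacle.
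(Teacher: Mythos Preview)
Your proof is correct and follows essentially the same argument as the paper: both use the naturality square for $\id \to R$, apply $F$, case-split on whether an endpoint lies in $\bA$ or is one of the new $X_\alpha$, and conclude by two-out-of-three. You are slightly more explicit than the paper in spelling out the two-out-of-three step in $\bC$ needed to see that $Rf$ is a weak equivalence.
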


In this case we say $\bA$ can be {\bf expanded} to contain the objects $\{X_\alpha\}$.

\begin{proof}
It is enough to show that $F$ applied to a weak equivalence between objects in $\bA'$ is a weak equivalence. 
Let $f\colon Y\to Z$ be a weak equivalence, where $Y$ and $Z$ are in $\bA'$.  Then we have a commuting diagram 
\[\xymatrix{F(Y)\ar[r]\ar[d]^{F(f)}&F(RY)\ar[d]^{F(Rf)}
\\
F(Z)\ar[r]&F(RZ)}\]
where the horizontal maps are induced by \fibrant replacement.  The right vertical arrow is a weak equivalence since $RY$ and $RZ$ are in $\bA$.  The top horizontal arrow is a weak equivalence, since either $Y = X_\alpha$ for some $\alpha$ and it is an equivalence by assumption, or $Y \in \bA$ and then it is $F$ of a weak equivalence in $\cat A$. The bottom horizontal is a weak equivalence for the same reason. The left vertical arrow is therefore a weak equivalence by two out of three.
\end{proof}

Our primary functor of interest is $\odot$, which is a functor of two variables $F\colon \cat{C} \times \cat{C}\to \cat{D}$. The most natural collection of \fibrant objects turns out to be a product category $\cat A \times \cat A$ for some $\cat A \subseteq \cat C$, and the \fibrant replacement functor is a product $R \times R$. In this case, we apply \cref{source_of_confusion} by adding \emph{pairs} $(X_\alpha,Y_\alpha) \in \cat C \times \cat C$ to the subcategory $\cat A \times \cat A$, and checking that the functor $R \times R$ induces an equivalence $F(X_\alpha,Y_\alpha) \to F(RX_\alpha,RY_\alpha)$. Often, it is convenient to check this last condition by applying the copies of $R$ one at a time:
\[ \xymatrix{ F(X_\alpha,Y_\alpha) \ar[r]^-\sim & F(RX_\alpha,Y_\alpha) \ar[r]^-\sim & F(RX_\alpha,RY_\alpha). } \]
For instance, this happens in the proof of \cref{unit_preserves_equivalences}.

\section{Structures on the homotopy bicategory $\Ex$}\label{apply_parametrized_spectra}

In this section we prove that  the structure we defined on the point-set bicategory $\Osp^c/\Top$ in \cref{sec:point_set} passes in a canonical way to the homotopy bicategory $\Ex = \Ho\Osp^c/\Top$.

We define this homotopy bicategory as follows. The 0-cells of $\Ex$ are the 0-cells of $\Osp^c/\Top$, in other words, topological spaces. We form the hom categories of $\Ex$ from those of $\Osp^c/\Top$ by inverting the 2-cells in $\Osp^c(A \times B)$ that are stable equivalences in the sense of \cite[Def 5.1.1]{convenient}. These are also called $\pi_*$-isomorphisms in \cite[Def 12.3.4]{ms}.

In other words, the 1-cells of $\Ex$ are the 1-cells of $\Osp^c/\Top$, and the 2-cells of $\Ex$ are zig-zags of 2-cells of $\Osp^c/\Top$, with every backwards-pointing arrow a stable equivalence, up to the usual relations that define a homotopy category, see e.g. \cite[Def 3.1.1]{spectra_book}.

It remains to show that the functors of \cref{sec:point_set} have associated derived functors on $\Ex$ that satisfy the same coherences.

The functors we studied in \cref{sec:point_set} are composites of the external product, pushforward and pullback. We use the following result from \cite[1.0.2]{convenient} to show that they descend to the homotopy category. 
A central role is played by the {\bf level $h$-fibrant spectra}. These are the spectra $X \in \Osp(B)$ for which each of the projection maps $X_n \to B$ is a Hurewicz fibration \cite[Definition 4.2.1]{convenient}.

\begin{thm}\label{convenient_main_thm}
In each category $\Osp^c(B)$ of freely $f$-cofibrant parametrized orthogonal spectra:
\begin{itemize}
	\item the external smash product $\barsmash$ preserves all stable equivalences,
	\item the external smash product of level $h$-fibrant spectra is level $h$-fibrant, 
	\item the pushforward $f_!$ preserves all stable equivalences, and preserves level $h$-fibrant spectra if $f$ is a Hurewicz fibration, and
	\item the pullback $f^*$ preserves level $h$-fibrant spectra and stable equivalences between level $h$-fibrant spectra.
\end{itemize}
	Furthermore, there is a strong symmetric monoidal level $h$-fibrant replacement functor $P$:
	\[ X \simar PX, \qquad P\Sph \cong \Sph, \qquad P(X \barsmash Y) \cong PX \barsmash PY. \]
\end{thm}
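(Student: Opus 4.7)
The plan is to reduce each of the four preservation claims to a cell-by-cell check on freely $f$-cofibrant spectra, and then to build the strong symmetric monoidal level $h$-fibrant replacement $P$ by hand using a fiberwise Moore-path construction.

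\textbf{Reduction via cells.} First I would fix generating cofibrations for the freely $f$-cofibrant class, built from sphere inclusions smashed fiberwise with shifted free sphere spectra $F_V S^0$, together with free pushforwards along Hurewicz cofibrations of base spaces. A freely $f$-cofibrant spectrum is then a retract of a cell complex in this presentation, so any construction that commutes with filtered colimits along Hurewicz cofibrations and handles generating cells correctly will handle all such spectra. With this in place, the pullback bullet is essentially a point-set fact: a Hurewicz fibration pulls back to a Hurewicz fibration, and stable equivalences between level $h$-fibrant spectra are detected by $\pi_\ast$-isomorphisms on fibers, which $f^\ast$ preserves. The external smash of two level $h$-fibrant spectra is level $h$-fibrant by the fiberwise Str\o{}m-type fact that the smash of well-based Hurewicz fibrations is again a Hurewicz fibration.

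\textbf{Preservation of stable equivalences.} For $\barsmash$ and $f_!$ the strategy is the same, and it is the main technical ingredient. On generating cells each operation is computed explicitly: an external smash with a generating cell is a relative cell attachment, and $f_!$ applied to a free parametrized spectrum is the free parametrized spectrum on a pushout of base spaces. In either case the result visibly preserves stable equivalences. For general freely $f$-cofibrant spectra, filter by skeleta and use that the attaching maps are Hurewicz cofibrations of parametrized spectra, so sequential colimits along them are homotopy colimits and $\pi_\ast$-isomorphisms pass through. When $f$ is itself a Hurewicz fibration, the same cell analysis shows $f_!$ takes level $h$-fibrant spectra to level $h$-fibrant spectra, because each pushout presentation has one leg a Hurewicz cofibration of total spaces sitting over a Hurewicz fibration of bases.

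\textbf{The monoidal replacement.} The main obstacle is producing $P$ with strong, rather than merely lax, monoidal structure and with $P\Sph \cong \Sph$ on the nose, since such a functor generally cannot be extracted from a standard model structure. My plan is to take $(PX)_V$ to be a fiberwise Moore-path space of $X_V \to B$ relative to the section: pairs $(\gamma, x)$ with $\gamma$ a Moore path in $X_V$ from a section point to $x$. This fibers by a Hurewicz fibration over $B$ at each level, comes with a natural level equivalence $X \to PX$ realized by constant paths, and is strictly functorial. Strong monoidality follows from the fact that a Moore path into an external smash corresponds uniquely to a pair of Moore paths of the same length in the factors, which yields a natural isomorphism $PX \barsmash PY \cong P(X \barsmash Y)$; symmetry, associativity, and the unit identity $P\Sph \cong \Sph$ are inherited from the corresponding identities for Moore-path concatenation. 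Once $P$ is constructed, the four preservation bullets combine to say exactly that the multi-span functors of \cref{sec:point_set} admit well-behaved right-derived versions on $\Ex$, which is all that the deformable-functor framework of \cref{sec:deform} requires.
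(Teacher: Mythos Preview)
The paper does not prove this theorem; it is quoted from \cite[1.0.2]{convenient} and used as a black box. So there is no proof in the paper to compare against, only the construction hinted at by later remarks (e.g.\ the explicit descriptions of $PU_A$ and $P\bcr{A}{f}{B}$, and the comment that $P$ is ``a composition of a pullback and a pushforward'').

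Your sketch for the four preservation bullets is broadly in the right spirit: reducing to cells and using that filtered colimits along Hurewicz cofibrations are homotopy colimits is the standard route in \cite{convenient}. That part is fine as an outline, though some of the individual claims (e.g.\ that the smash of well-based Hurewicz fibrations is again one) would need more care over non-trivial bases.

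The construction of $P$, however, has a genuine gap. You take Moore paths in the \emph{total space} $X_V$ ``from a section point to $x$,'' and then argue that a Moore path into an external smash decomposes uniquely into a pair of Moore paths of equal length in the factors. This is false: paths in a smash product $X \wedge Y$ do not split into pairs of paths in $X$ and $Y$, because of the collapse of $X \vee Y$. The actual construction in \cite[\S 2.7]{convenient} takes paths in the \emph{base space}, not the total space: for $X$ over $B$ one sets $PX = X \times_B B^I$ (pullback along $\textup{ev}_0$), projected to $B$ via $\textup{ev}_1$. Strong monoidality then comes for free from the canonical homeomorphism $A^I \times B^I \cong (A \times B)^I$, which gives
\[
(PX) \barsmash (PY) \;\cong\; (X \barsmash Y) \times_{A \times B} (A \times B)^I \;=\; P(X \barsmash Y),
\]
and $P\Sph \cong \Sph$ because $\{*\}^I = \{*\}$. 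This is also consistent with the explicit formulas the paper gives later, e.g.\ $PU_A = \Sigma^\infty_{+(A \times A)} A^I$. Your Moore-path-in-the-total-space idea does not produce a strong monoidal functor, and the unit identity would also fail since Moore paths in $S^0$ are not a point.
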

More explicitly, $PX$ is defined on each spectrum level $X_n$ by the formula
\[ (PX)_n = (X_n \times_B B^I) \cup_{(B \times_B B^I)} B, \]
where $B^I = \textup{Map}(I,B)$ is the space of paths in $B$ with no conditions on the endpoints, regarded as a space over $B$ by evaluation at $0 \in I$. The map $(PX)_n \to B$ is the evaluation at the other endpoint $1 \in I$.

Since $\barsmash$ and $f_!$ preserve all equivalences, we can take them to be their own right derived functor. The functor $f^*$ preserves equivalences on level $h$-fibrant spectra, so it is right-deformable. We therefore get right-derived functors as in \cite[Rmk 6.1.4]{convenient}:
\[ X \barsmash^{\R} Y = X \barsmash Y, \qquad (\R f_!)(X) = f_!(X), \qquad (\R f^*)(X) = f^*(PX). \]

\begin{rmk}\label{expand_radiant}
\cref{convenient_main_thm} provides our default collection of \fibrant objects for this section: the freely $f$-cofibrant  and level $h$-fibrant spectra. Unfortunately, not all spectra of interest are of this form.  In particular, unit objects and base change objects are not level $h$-fibrant. So, when we need these 1-cells, we use the approach of 
\cref{source_of_confusion} to expand the collection of \fibrant objects. 
\end{rmk}

The following result is the primary tool we will use to extend our collections of \fibrant objects. Let $\cat{Top}(B)$ denote the slice category of topological spaces with a map to $B$. Let $X \in \cat{Top}(A \times B)$ be an arbitrary topological space over $A \times B$. We can add a disjoint copy of $(A \times B)$ to $X$ and take its fiberwise suspension spectrum:
\[ \left( \Sigma^\infty_{+(A \times B)} X \right). \],
The unit $U_A$, twisting object $T_{A_i}$, and, more generally the base change object $\bcr{A}{f}{B}$ are all special cases of this construction.

\begin{lem}\label{convenient:preserve_equivalences}
Let $W,X\in \cat{Top}(A \times B)$, let $Y\in \Osp^c(B \times C)$, and let $f\colon W \to X$ be a weak homotopy equivalence. If the projections $W \to B$ and $X \to B$ are both Serre fibrations, or alternatively if each of the projections $Y_n \to B$ is a Serre fibration, then the map induced by $f$
\[ (\Sigma^\infty_{+(A \times B)} f)\odot \id \colon \left( \Sigma^\infty_{+(A \times B)} W \right) \odot Y \to \left( \Sigma^\infty_{+(A \times B)} X \right) \odot Y \]
is a stable equivalence of parametrized spectra over $A \times C$.
\end{lem}

\begin{proof}
By \cite[Remark 6.4.7]{convenient}, the product $\left( \Sigma^\infty_{+(A \times B)} X \right) \odot Y$ is given at each spectrum level by the space-level circle product $X_{+(A \times B)} \odot Y_n$. By \cite[Cor 6.5.2]{convenient}, this preserves weak homotopy equivalences provided either $X \to B$ or $Y_n \to B$ is a Serre fibration. Therefore under the assumptions in the statement of the lemma, the map 
\[(\Sigma^\infty_{+(A \times B)} f)\odot \id\]
is a weak homotopy equivalence at every spectrum level, so it is a stable equivalence.
\end{proof}

\subsection{The bicategory $\Ex$}  In this section we show that $\Ex$ is a bicategory.  
We first consider the composition product $\odot$.

\begin{lem}\label{odot_deformable}
The functor
\[ \xymatrix{ \odot\colon \Osp^c(A \times B) \times \Osp^c(B \times C) \ar[r] & \Osp^c(A \times C) } \]
preserves stable equivalences between pairs of level $h$-fibrant spectra, and so 
is right-deformable.
\end{lem}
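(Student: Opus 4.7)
The plan is to unpack the multi-span formula from \eqref{eq:odot}, which expresses
\[ X \odot Y = (1\pi 1)_!(1\Delta 1)^*(X \barsmash Y), \]
and then feed each of the three constituent functors $\barsmash$, $(1\Delta 1)^*$, and $(1\pi 1)_!$ through \cref{convenient_main_thm}.

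Starting from equivalences $X \simar X'$ in $\Osp^c(A \times B)$ and $Y \simar Y'$ in $\Osp^c(B \times C)$ between level $h$-fibrant spectra, the first step is the external smash product: by \cref{convenient_main_thm} this preserves all stable equivalences, so $X \barsmash Y \to X' \barsmash Y'$ is a stable equivalence, and moreover both source and target remain level $h$-fibrant. The second step is the pullback $(1\Delta 1)^*$, which by \cref{convenient_main_thm} preserves both level $h$-fibrancy and equivalences between level $h$-fibrant spectra. The third step is the pushforward $(1\pi 1)_!$, which preserves all stable equivalences. Composing these three steps gives the desired preservation of equivalences on pairs of level $h$-fibrant spectra.

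For the right-deformable structure itself, I would take the full subcategory $\bA \subseteq \Osp^c(A \times B) \times \Osp^c(B \times C)$ consisting of pairs of level $h$-fibrant spectra, with \fibrant replacement functor $P \times P$, where $P$ is the strong symmetric monoidal level $h$-fibrant replacement functor provided by \cref{convenient_main_thm}. The natural weak equivalence $\id \to P \times P$ is inherited factorwise from $\id \to P$, and $P \times P$ visibly lands in $\bA$, so all three clauses of \cref{right_deformable} are satisfied.

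I do not anticipate a real obstacle here: the lemma is essentially a bookkeeping consequence of \cref{convenient_main_thm}. The only subtlety worth flagging is that the intermediate level $h$-fibrancy of $X \barsmash Y$ is exactly what licenses the pullback $(1\Delta 1)^*$ to preserve the equivalence, so the three clauses of \cref{convenient_main_thm} must be applied in the right order. The same pattern should recur for all the other multi-span functors introduced in \cref{sec:point_set}.
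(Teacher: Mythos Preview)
Your proof is correct and follows essentially the same approach as the paper: both unpack $\odot$ into its constituent functors $\barsmash$, $(1\Delta 1)^*$, $(1\pi 1)_!$ and appeal to \cref{convenient_main_thm} for each, then take the \fibrant objects to be pairs of level $h$-fibrant spectra with replacement $P \times P$. Your version is simply more explicit about the order in which the clauses of \cref{convenient_main_thm} are applied and why the intermediate level $h$-fibrancy matters, which is a helpful elaboration of the paper's terse argument.
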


\begin{proof}
This functor is defined as the action of the multi-span \eqref{eq:odot}, so it is composed of an external smash product, a pullback, and a pushforward along a Hurewicz fibration. By \cref{convenient_main_thm}, each of these three steps preserves level $h$-fibrant spectra and equivalences between them, so the first claim follows. For the second claim, we let the subcategory of \fibrant objects be pairs of level $h$-fibrant spectra.
\end{proof}

\begin{lem}\label{associator_collection}
	The lists
\[\xymatrix @R=0.5em{
	\Osp^c(A \times B) \times \Osp^c(B \times C) \times \Osp^c(C \times D) \ar[r]^-{\odot \times \id} & \Osp^c(A \times C) \times \Osp^c(C \times D) \ar[r]^-\odot & \Osp^c(A \times D) \\
	\Osp^c(A \times B) \times \Osp^c(B \times C) \times \Osp^c(C \times D) \ar[r]^-{\id \times \odot} & \Osp^c(A \times B) \times \Osp^c(B \times D) \ar[r]^-\odot & \Osp^c(A \times D)
}\]
are coherently right-deformable (\cref{coherently_right_deformable}).
\end{lem}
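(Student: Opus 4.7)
The plan is to check the two conditions for coherent right-deformability directly, using the deformability of $\odot$ from \cref{odot_deformable} combined with the preservation properties of $\barsmash$, $f_!$, and $f^*$ listed in \cref{convenient_main_thm}. For both lists, I take the \fibrant objects to be the tuples whose entries are each level $h$-fibrant freely $f$-cofibrant spectra, and the \fibrant replacement functor on each factor to be the symmetric monoidal level $h$-fibrant replacement $P$ from \cref{convenient_main_thm} (or a levelwise product of copies of $P$). The natural transformation $\id \to P$ through stable equivalences is the one supplied by \cref{convenient_main_thm}.

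With this choice, the outer functor $\odot$ in each list is right-deformable by \cref{odot_deformable}, and the first functor ($\odot \times \id$ or $\id \times \odot$) is right-deformable on triples of level $h$-fibrant objects because $\odot$ preserves equivalences between level $h$-fibrant pairs and $\id$ preserves all equivalences. It remains to verify the compatibility condition $F_1(\cat A_0) \subseteq \cat A_1$, which in both cases reduces to the single claim: if $X$ and $Y$ are level $h$-fibrant, then $X \odot Y$ is level $h$-fibrant.

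This is the only real point to check, and I would verify it by tracing through the multi-span \eqref{eq:odot} defining $\odot$: we have $X \odot Y \cong (1\pi 1)_!\, (1\Delta 1)^*(X \barsmash Y)$. By \cref{convenient_main_thm}, $X \barsmash Y$ is level $h$-fibrant, and $(1\Delta 1)^*$ preserves level $h$-fibrancy. For the pushforward step I need $1\pi 1\colon A \times B \times C \to A \times C$ to be a Hurewicz fibration; but this is simply a projection off of the $B$-factor, hence a trivial fiber bundle, hence a Hurewicz fibration. Therefore $(1\pi 1)_!$ preserves level $h$-fibrancy by \cref{convenient_main_thm}, and $X \odot Y$ is level $h$-fibrant as desired.

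The main (minor) obstacle is really just the bookkeeping about which maps out of products of base spaces are Hurewicz fibrations; here it is immediate because projections are, but in later settings (equivariant or fiberwise) one has to be more careful about this condition. With both conditions verified, each list qualifies as coherently right-deformable in the sense of \cref{sec:deform}, so \cref{rd_descends} applies and gives the desired compositions of derived functors together with canonical comparison isomorphisms.
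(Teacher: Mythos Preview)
Your proof is correct and follows essentially the same argument as the paper: take tuples of level $h$-fibrant spectra as the \fibrant objects at each stage, and verify that $\odot$ preserves level $h$-fibrancy because the pushforward in the defining multi-span \eqref{eq:odot} is along the projection $1\pi 1$, which is a Hurewicz fibration. The paper's proof is a more compressed version of exactly this.
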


\begin{proof}
	At each stage, we use the tuples of level $h$-fibrant spectra as our \fibrant objects. As we observed in the proof of \cref{odot_deformable}, each of the above instances of $\odot$ preserves level $h$-fibrancy, so our subcategories of \fibrant objects are preserved by each of the functors in the list.
\end{proof}

\begin{cor}\label{derived_associator}
	The point-set associator isomorphism induces a canonical associator on the homotopy category
	\[ \ti\alpha\colon (X \odot^\R Y) \odot^\R Z \simeq X \odot^\R (Y \odot^\R Z). \]
\end{cor}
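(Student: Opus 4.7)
The plan is to extract this directly from the machinery already assembled in \cref{sec:deform}, using \cref{associator_collection} as the input. By \cref{associator_collection}, both of the lists
\[ \odot \circ (\odot \times \id) \qquad \text{and} \qquad \odot \circ (\id \times \odot) \]
of functors from $\Osp^c(A \times B) \times \Osp^c(B \times C) \times \Osp^c(C \times D)$ to $\Osp^c(A \times D)$ are coherently right-deformable. The point-set associator of \cref{lem_bicat} is a natural isomorphism between these two composites.

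I would then invoke \cref{rd_descends_2}: any isomorphism between composites of coherently right-deformable functors determines a canonical isomorphism between the associated composites of right-derived functors. This yields a canonical isomorphism
\[ \R\odot \circ (\R\odot \times \id) \;\simeq\; \R\odot \circ (\id \times \R\odot) \]
of functors on homotopy categories, which by definition of $\odot^\R$ is exactly the desired map
\[ \ti\alpha\colon (X \odot^\R Y) \odot^\R Z \simeq X \odot^\R (Y \odot^\R Z). \]
Naturality follows from the naturality of the underlying point-set associator together with the fact that passage to right-derived functors is functorial on natural transformations (\cref{rd_descends_1.5}).

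There is essentially no obstacle here since the hard work has already been done in verifying coherent right-deformability; the only thing to keep in mind is the bookkeeping described in \cref{just_put_in_a_ton_of_rs}, namely that the canonical isomorphism is obtained by inserting level $h$-fibrant replacements on the three inputs, applying the point-set associator, and recognizing the result as the appropriate iterated composite of right-derived products. This explicit description will also be convenient for the coherence diagrams (pentagon, triangle) that must be verified next, since by \cref{rd_descends_3} they follow automatically from their point-set counterparts established in \cref{lem_bicat}.
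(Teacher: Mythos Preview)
Your proof is correct and follows essentially the same approach as the paper: both invoke \cref{associator_collection} to get coherent right-deformability of the two lists, then apply \cref{rd_descends_2} (together with \cref{rd_descends}) to obtain the canonical isomorphism of right-derived functors. Your additional remarks on naturality and the explicit description via \cref{just_put_in_a_ton_of_rs} are accurate embellishments but not needed for the corollary itself.
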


\begin{proof}
	By \cref{associator_collection,rd_descends}, the left-hand side is a right-derived functor of $(X \odot Y) \odot Z$, and the right-hand side is a right-derived functor of $X \odot (Y \odot Z)$. The point-set associator isomorphism therefore induces a canonical isomorphism of derived functors on the homotopy category by \cref{rd_descends_2}.
\end{proof}

\begin{rmk}\label{just_put_in_a_ton_of_ps}
	By \cref{just_put_in_a_ton_of_rs}, this derived associator is given explicitly as the composite
	\[ P(PX \odot PY) \odot PZ \simeq (PX \odot PY) \odot PZ \overset{\alpha}\cong PX \odot (PY \odot PZ) \simeq PX \odot P(PY \odot PZ) \]
where $P$ is the level h-fibrant replacement functor from \cref{convenient_main_thm}.
\end{rmk}

\begin{rmk}
	One might think that we are forced to make $\R\id \times \odot^\R$ the right-derived functor of $\id \times \odot$ here, but that is not the case. We are perfectly free to choose $\id \times \odot^\R$ as the right-derived functor. Any two right-derived functors are canonically identified, so modifying the choice of right-derived functor does not affect the truth of any of the statements we make about it!
\end{rmk}

\begin{lem}\label{pentagon_again}
	The pentagon axiom (\cref{fig:pentagon}) holds in the homotopy category, using the derived product $\odot^\R$ and the derived associator map from \cref{derived_associator}:
\end{lem}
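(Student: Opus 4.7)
The plan is to deduce the derived pentagon from the point-set pentagon (already established in \cref{lem_bicat}) by invoking \cref{rd_descends_3}. That corollary says that commutative diagrams between composites of coherently right-deformable functors descend to commutative diagrams in the homotopy category. So it suffices to verify that each of the five composite functors appearing as vertices of the pentagon is a coherently right-deformable list, with the same \fibrant subcategories used throughout.

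First I would fix, for each product $A_1 \times A_2$ of base spaces, the subcategory of \fibrant objects in $\Osp^c(A_1 \times A_2)$ to be the level $h$-fibrant spectra, with the \fibrant replacement functor $P$ from \cref{convenient_main_thm}. Each vertex of the pentagon corresponds to a list of three applications of $\odot$ (with the last one being the outer composition and the earlier ones computing the two inner bracketings). For example, the list
\[
\Osp^c(A\times B) \times \Osp^c(B \times C) \times \Osp^c(C\times D) \times \Osp^c(D \times E) \xrightarrow{\odot \times \id \times \id} \Osp^c(A\times C) \times \Osp^c(C \times D) \times \Osp^c(D \times E) \xrightarrow{\odot \times \id} \Osp^c(A\times D) \times \Osp^c(D \times E) \xrightarrow{\odot} \Osp^c(A\times E)
\]
corresponds to $((X \odot Y) \odot Z) \odot W$. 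As in the proof of \cref{associator_collection}, the span in \eqref{eq:odot} that defines $\odot$ has final pushforward along the Hurewicz fibration $1\pi 1$, so $\odot$ preserves level $h$-fibrant spectra. Hence at each stage of the list the image lies in the \fibrant subcategory of the next, which is exactly what coherent right-deformability demands. The argument for the remaining four vertices (corresponding to the other bracketings $(X \odot (Y \odot Z)) \odot W$, $X \odot ((Y \odot Z) \odot W)$, $X \odot (Y \odot (Z \odot W))$, and $(X \odot Y) \odot (Z \odot W)$) is identical: in each case $\odot$ is applied three times to subcategories of level $h$-fibrant objects, and each application lands in level $h$-fibrant objects.

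With all five lists coherently right-deformable, their right-derived functors coincide (up to canonical equivalence) with $\odot^\R$ applied three times in the corresponding order, by \cref{rd_descends}. The four edges of the pentagon are either point-set associators $\alpha$ or products of associators with identities; by \cref{rd_descends_2} each induces the canonical derived associator map $\ti\alpha$ (or a product of it with an identity). Since the point-set pentagon commutes by \cref{lem_bicat}, \cref{rd_descends_3} produces the commuting derived pentagon in $\Ex$. I do not anticipate a serious obstacle: the entire argument is essentially bookkeeping, and the only nontrivial input is the preservation of level $h$-fibrancy by $\odot$, which was already established in the proof of \cref{associator_collection}.
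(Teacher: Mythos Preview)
Your proposal is correct and matches the paper's proof essentially line for line: both take tuples of level $h$-fibrant spectra as the \fibrant objects, observe that $\odot$ preserves them (so every list in the pentagon is coherently right-deformable), and then invoke \cref{rd_descends_3}. The paper adds one remark you only gesture at, namely that identifying the edges $\ti\alpha \odot^\R \id$ requires knowing that passage to derived functors respects horizontal composition of natural transformations; you cover this implicitly when you say each edge induces ``a product of it with an identity.''
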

\begin{equation}\label{eq:pentagon_again}
\xymatrix{
	(X \odot^\R (Y \odot^\R Z)) \odot^\R W \ar[r]^-{\ti\alpha} & X \odot^\R ((Y \odot^\R Z) \odot^\R W) \ar[d]^-{\id \odot^\R\ti\alpha} \\
	((X \odot^\R Y) \odot^\R Z) \odot^\R W \ar[d]_-{\ti\alpha} \ar[u]^-{\ti\alpha\odot^\R \id} & X \odot^\R (Y \odot^\R (Z \odot^\R W)) \\
	(X \odot^\R Y) \odot^\R (Z \odot^\R W) \ar@{=}[r] & (X \odot^\R Y) \odot^\R (Z \odot^\R W) \ar[u]_-{\ti\alpha}
}
\end{equation}

\begin{proof}
We first verify the variant of \cref{associator_collection} in which we start with a product of four categories instead of three.  The \fibrant objects are the tuples of  level $h$-fibrant spectra. This shows that every list appearing in \cref{fig:pentagon} is coherently right-deformable.

Since the pentagon axiom (\cref{fig:pentagon}) holds in the point-set bicategory $\Osp^c/\Top$, by \cref{rd_descends_3} we therefore get a commuting diagram between the right-derived functors, giving the commuting diagram \eqref{eq:pentagon_again} in the homotopy category. (The left-hand upper vertical map is the map of right-derived functors induced by $\alpha \odot \id$, and we identify this with $\ti\alpha \odot^\R \id$ by observing that $\ti\alpha \odot^\R \id$ satisfies the defining property from \cref{rd_descends_1.5}. We identify the right-hand upper vertical map in the same way.)
\end{proof}

The remaining diagrams from \cref{sec:point_set} will follow by the same argument. The only thing to check is that every list of functors that arises is coherently right-deformable.  The unit isomorphism requires more care because $U_A$ is not level $h$-fibrant. This is our first use of the ideas in  
\cref{source_of_confusion}.

\begin{lem}\label{unit_preserves_equivalences}
The collection of \fibrant objects for $\odot$ from \cref{odot_deformable} can be expanded to contain pairs $(X,Y)$ where one of $X$ or $Y$ is a unit 1-cell and the other is level $h$-fibrant.  
\end{lem}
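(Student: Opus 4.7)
The plan is to apply \cref{source_of_confusion} to the right-deformable functor $\odot$ of \cref{odot_deformable}, enlarging the default collection (pairs of level $h$-fibrant spectra) by pairs of the form $(U_A, Y)$ and $(X, U_B)$ where the non-unit slot is level $h$-fibrant. To do this I first adjust the \fibrant replacement functor $R \times R$ so that on a unit $U_A = \Sigma^\infty_{+(A\times A)} A$ it returns $RU_A = \Sigma^\infty_{+(A\times A)} \tilde A$, where $\tilde A$ is obtained by factoring the diagonal $A \to A \times A$ through a weak equivalence $A \simar \tilde A$ followed by a Hurewicz fibration $\tilde A \to A \times A$ (for instance, via the Moore path construction). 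Because $\tilde A \to A \times A$ is Hurewicz, every spectrum level of $RU_A$ projects to $A \times A$ by a Hurewicz fibration, so $RU_A$ is level $h$-fibrant as required by \cref{expand_radiant}.

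By \cref{source_of_confusion}, it then suffices to check that for $Y$ level $h$-fibrant the map $U_A \odot Y \to RU_A \odot RY$ induced by \fibrant replacement is a stable equivalence. Following the strategy described at the end of \cref{extending_radiance}, I apply $R$ one slot at a time, factoring the map as
\[ U_A \odot Y \ \longrightarrow\ RU_A \odot Y \ \longrightarrow\ RU_A \odot RY. \]
The first arrow is induced by the underlying weak equivalence of spaces $A \to \tilde A$ over $A \times A$; since $Y$ is level $h$-fibrant, each projection $Y_n \to A$ is a Hurewicz (hence Serre) fibration, so the second hypothesis of \cref{convenient:preserve_equivalences} applies and this arrow is a stable equivalence. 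The second arrow is a map between $\odot$ applied to two pairs of level $h$-fibrant spectra, and is therefore a stable equivalence by \cref{odot_deformable} (via \cref{convenient_main_thm}).

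The symmetric argument with variables exchanged handles the pairs $(X, U_B)$ with $X$ level $h$-fibrant. The principal subtlety in the proof is not combinatorial: it is simply arranging the \fibrant replacement of $U_A$ to take the special form $\Sigma^\infty_{+(A\times A)} \tilde A$ so that \cref{convenient:preserve_equivalences} can be invoked directly, and the path-space factorization of the diagonal is exactly what delivers this.
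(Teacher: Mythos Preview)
Your proof is correct and follows essentially the same strategy as the paper: apply \cref{source_of_confusion} by factoring the replacement map through $RU_A \odot Y$ and invoking \cref{convenient:preserve_equivalences}. Two small remarks: first, no adjustment of the replacement functor is needed, since the paper's $P$ already satisfies $PU_A \cong \Sigma^\infty_{+(A\times A)} A^I$ (it is strong symmetric monoidal, so commutes with suspension spectra); second, the paper invokes the \emph{first} hypothesis of \cref{convenient:preserve_equivalences} (that the projections $A \to A$ and $A^I \to A$ are fibrations), which makes $U_A \odot X \to PU_A \odot X$ an equivalence for \emph{all} $X$, whereas you use the second hypothesis (that $Y_n \to A$ is a fibration because $Y$ is level $h$-fibrant)---either works here.
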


\begin{proof}
Recall that $U_A = \Sigma^\infty_{+(A \times A)} A$ and $PU_A = \Sigma^\infty_{+(A \times A)} A^I$, where $A^I = \textup{Map}(I,A)$ is the space of paths in $A$ with no conditions on the endpoints. The three maps
\[ A \xto\id A, \qquad A^I \xto{\textup{ev}_0} A, \qquad \textup{and} \qquad A^I \xto{\textup{ev}_1} A \]
are Hurewicz fibrations. In particular, they are Serre fibrations, so by \cref{convenient:preserve_equivalences} the maps
\[U_A\odot X\to PU_A\odot X \quad 
\text{and} 
\quad X\odot U_A\to X\odot PU_A\] 
are stable equivalences for any parametrized spectrum $X$.

Therefore, if $X$ is level $h$-fibrant, the composites
\[U_A\odot X\to PU_A\odot X \to PU_A \odot PX \quad 
\text{and} 
\quad X\odot U_A\to X\odot PU_A \to PX \odot PU_A \]
are stable equivalences by \cref{odot_deformable}.  By \cref{source_of_confusion}, the subcategory of \fibrant objects can therefore be expanded to include all pairs of the form $(U_A,X)$ and $(X,U_A)$.
\end{proof}

\begin{cor}\label{derived_unitor}
	The point-set unitor isomorphisms induce canonical unitor isomorphisms on the homotopy category
	\[ \ti\ell\colon U_A \odot^\R X \simeq X, \qquad \ti r\colon X \odot^\R U_B \simeq X. \]
\end{cor}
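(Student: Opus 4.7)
The plan is to mirror the proof of \cref{derived_associator} essentially verbatim. By \cref{unitor_collection}, the composite
\[ \Osp^c(A \times B) \xrightarrow{U_A \times \id} \Osp^c(A \times A) \times \Osp^c(A \times B) \xrightarrow{\odot} \Osp^c(A \times B) \]
is coherently right-deformable on the expanded \fibrant subcategory supplied by \cref{unit_preserves_equivalences}. The symmetric composite $X \mapsto X \odot U_B$ admits the same analysis because \cref{unit_preserves_equivalences} treats both pairings $(U_A, X)$ and $(X, U_B)$ in parallel. Applying \cref{rd_descends} then identifies the right-derived functors of these composites with $U_A \odot^{\R} X$ and $X \odot^{\R} U_B$, respectively. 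On the other side of the comparison, the identity functor preserves all stable equivalences, so it is its own right-derived functor.

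The point-set unitors $\ell\colon U_A \odot X \cong X$ and $r\colon X \odot U_B \cong X$ from \cref{lem_bicat} are natural isomorphisms between these point-set functors and the identity, so \cref{rd_descends_2} produces the desired canonical natural isomorphisms $\ti\ell$ and $\ti r$ on the homotopy category.

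The only step that requires anything beyond pure bookkeeping is the availability of the expanded \fibrant subcategory, which is exactly the obstacle one would expect to be the main one, since $U_A$ is manifestly not level $h$-fibrant. This is handled by \cref{unit_preserves_equivalences}, which leans on the fact that both $A \xto{\id} A$ and $A^I \xto{\mathrm{ev}_1} A$ are Hurewicz fibrations, together with \cref{convenient:preserve_equivalences}. Once this expansion is in place, no further obstacles arise, and the explicit recipe of \cref{just_put_in_a_ton_of_rs} produces $\ti\ell$ by removing intermediate fibrant replacements and applying the point-set unitor, in direct analogy with \cref{just_put_in_a_ton_of_ps}.
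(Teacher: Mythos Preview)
Your proposal is correct and follows essentially the same approach as the paper, which states the corollary without proof as an immediate consequence of \cref{unitor_collection} in direct analogy with \cref{derived_associator}. Your explicit handling of the symmetric case $X \odot U_B$ (which \cref{unitor_collection} does not spell out) and your remark that the identity functor is its own right-derived functor are both appropriate elaborations of what the paper leaves implicit.
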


\begin{proof}
	As in the proof of \cref{derived_associator}, by \cref{rd_descends_2} it suffices to show that the two lists
\[\xymatrix @C=3em @R=0.5em{
	{(*)} \times \Osp^c(A \times B) \ar[r]^-{U_A \times \id} & \Osp^c(A \times A) \times \Osp^c(A \times B) \ar[r]^-\odot & \Osp^c(A \times B) \\
	\Osp^c(A \times B) \times {(*)} \ar[r]^-{\id \times U_B} & \Osp^c(A \times B) \times \Osp^c(B \times B) \ar[r]^-\odot & \Osp^c(A \times B)
}\]
are coherently right-deformable. To show this, we take the \fibrant objects in ${(*)} \times \Osp^c(A \times B)$ to be the level $h$-fibrant spectra in $\Osp^c(A \times B)$, and the \fibrant objects in $\Osp^c(A \times A) \times \Osp^c(A \times B)$ to be the pairs $(X,Y)$ in which $X$ is level $h$-fibrant or the unit $U_A$, and $Y$ is level $h$-fibrant. This is a valid collection of radiant objects by \cref{unit_preserves_equivalences}, making the first list coherently right-deformable. The second list follows by the same argument.
\end{proof}

\begin{prop}\label{prop:ex_bicat}\label{ex_is_coherently_deformable}
$\Ex$ is a bicategory with horizontal composition  given by the right derived functor of $\odot$ from \cref{odot_deformable} and $U_A$ as unit 1-cells.
\end{prop}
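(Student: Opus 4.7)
The bicategory data is already in place: the composition is the derived $\odot^\R$ from \cref{odot_deformable}, the units are $U_A$, the associator $\ti\alpha$ is provided by \cref{derived_associator}, and the unitors $\ti\ell, \ti r$ by \cref{derived_unitor}. The pentagon identity is precisely \cref{pentagon_again}. The only remaining axiom to verify is the triangle identity of \cref{fig:triangle}, interpreted in the homotopy category with $\odot^\R$, $\ti\alpha$, $\ti\ell$, and $\ti r$.

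The strategy mirrors \cref{pentagon_again}. I would exhibit each of the functors $(X,Y) \mapsto (X \odot U_B) \odot Y$, $(X,Y) \mapsto X \odot (U_B \odot Y)$, and $(X,Y) \mapsto X \odot Y$ as the total composite of a list of right-deformable functors on products of the categories $\Osp^c(A \times B)$ and $\Osp^c(B \times C)$, inserting $U_B$ as a constant functor into the middle slot where required. Once these lists are shown to be coherently right-deformable, \cref{rd_descends_3} transports the point-set triangle identity established during the proof of \cref{lem_bicat} to the corresponding commuting triangle of derived natural isomorphisms in $\Ex$.

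The one technical hurdle, and the place where the main work lies, is that $U_B$ is not level $h$-fibrant, so the default subcategory of level $h$-fibrant triples does not contain the triples $(X, U_B, Y)$ that arise. Following the pattern of \cref{unit_preserves_equivalences} and \cref{unitor_collection}, I would expand the collection of \fibrant triples via \cref{source_of_confusion} to include all $(X, U_B, Y)$ with $X$ and $Y$ level $h$-fibrant, and similarly for the parenthesization $(X \odot U_B) \odot Y$ versus $X \odot (U_B \odot Y)$. The verification reduces to checking that $U_B \to PU_B$ induces an equivalence after composition with level $h$-fibrant spectra on either side; since both of the underlying spaces $B$ and $B^I$ of $U_B = \Sigma^\infty_{+(B \times B)} B$ and $PU_B = \Sigma^\infty_{+(B \times B)} B^I$ project to $B$ via Hurewicz (hence Serre) fibrations, \cref{convenient:preserve_equivalences} applies one variable at a time to supply the required equivalences. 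With the expanded \fibrant triples in hand, every list of functors appearing in the triangle is coherently right-deformable, and \cref{rd_descends_3} produces the commutativity of the derived triangle, completing the verification that $\Ex$ is a bicategory.
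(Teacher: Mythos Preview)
Your proposal is correct and follows essentially the same approach as the paper: reduce to the triangle axiom, expand the \fibrant objects in the middle slot to include $U_B$ via \cref{unit_preserves_equivalences}, and then invoke \cref{rd_descends_3}. The one point the paper makes slightly more explicit is the preservation of \fibrant objects along the intermediate $\odot$, noting that $X \odot U_B \cong X$ (and $U_B \odot Y \cong Y$) is again level $h$-fibrant, so the image of $(X,U_B,Y)$ under $\odot \times \id$ or $\id \times \odot$ lands back in the default \fibrant pairs.
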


\begin{proof}
\cref{derived_associator,derived_unitor} give the associativity and unit isomorphisms. We verified the pentagon axiom (\cref{fig:pentagon}) in \cref{pentagon_again}, so it remains to verify the triangle or unit axiom (\cref{fig:triangle}). The three functors that appear in the triangle axiom are as follows:
\begin{equation}
\begin{aligned}
\Osp^c(A \times B) \times \Osp^c(B \times C) & \xto{\odot} \Osp^c(A \times C), \\
\Osp^c(A \times B) \times {(*)} \times \Osp^c(B \times C) &\xto{\id \times U_B \times \id} \Osp^c(A \times B) \times \Osp^c(B \times B) \times \Osp^c(B \times C) \\
	& \xto{\odot \times \id} \Osp^c(A \times B) \times \Osp^c(B \times C) \\
	& \xto{\odot} \Osp^c(A \times C), \textup{ and } \\
\Osp^c(A \times B) \times {(*)} \times \Osp^c(B \times C) &\xto{\id \times U_B \times \id} \Osp^c(A \times B) \times \Osp^c(B \times B) \times \Osp^c(B \times C) \\
	& \xto{\id \times \odot} \Osp^c(A \times B) \times \Osp^c(B \times C) \\
	& \xto{\odot} \Osp^c(A \times C).
\end{aligned}\label{eq:pentagon_derived}
\end{equation}
The first of these is already right-deformable by \cref{odot_deformable}, so we check that the remaining two lists are coherently right-deformable. We take the \fibrant objects to be those spectra that are level $h$-fibrant, except in the category $\Osp^c(B \times B)$, where it can be either the unit $U_B$ or level $h$-fibrant. As in the proof of  \cref{derived_unitor}, this is a valid subcategory of \fibrant objects. The \fibrant objects are preserved  by the functors in \eqref{eq:pentagon_derived} since $(-)\odot U_B$ and $U_B \odot (-)$ are isomorphic to the identity, so they preserve level $h$-fibrant spectra.

By \cref{rd_descends_3}, the triangle axiom (\cref{fig:triangle}) in the point-set bicategory $\Osp^c/\Top$ therefore implies the triangle axiom in the homotopy bicategory $\Ex$.
\end{proof}

\begin{rmk}
In \cite[Theorem 6.4.8]{convenient} several approaches are given for the construction of this bicategory.  This proof is an elaboration on the second approach there. See also \cite[ 17.1.3]{ms} for the original construction by May and Sigurdsson.
\end{rmk}

\subsection{Shadows and Fuller structure}\label{sec:shadow_fuller}

We now turn to the proof that $\Ex$ has a shadowed $n$-Fuller structure. As in the previous subsection, we separate the proof according to how we define the subcategory of \fibrant objects. We start with the cases where the \fibrant objects can be taken to be the level $h$-fibrant spectra and the unit objects $U_A$:

\begin{lem}\label{ex_is_coherently_deformable_2}
The functor $\sh{\,}$ from \cref{df:shadow_span} and the functor $\boxtimes $ from \cref{nfull_funct} are both right-deformable. There are isomorphisms $\ti\theta$ and 
$\ti m_\boxtimes$ making the diagrams in \cref{fig:shadow_associator,fig:shadow_unitor,fig:pseudo_odot} commute for the right derived functors of $\odot$, $\sh{\,}$ and $\boxtimes$.
\end{lem}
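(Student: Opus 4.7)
The plan is to follow the three-step pattern established in the preceding subsection. First, I would verify that $\sh{\,}$ and $\boxtimes$ are right-deformable. Second, I would obtain $\ti\theta$ and $\ti m_\boxtimes$ from their point-set counterparts by invoking \cref{rd_descends_2}, which requires verifying coherent right-deformability of the lists underlying the source and target of each isomorphism. Third, I would verify each coherence diagram by showing that every list of functors appearing in it is coherently right-deformable and appealing to \cref{rd_descends_3}.

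For the first step, I would observe that $\boxtimes$ is the composite of the $n$-fold external smash product with pullback along a canonical isomorphism of spaces. By \cref{convenient_main_thm}, both operations preserve all stable equivalences and level $h$-fibrancy, so $\boxtimes$ preserves all equivalences and can be taken as its own right-derived functor. For the shadow $\sh{X} = \pi_! \Delta^*(X)$, the pullback $\Delta^*$ preserves level $h$-fibrancy and equivalences between level $h$-fibrant spectra, while the pushforward $\pi_!$ along the Hurewicz fibration $\pi\colon A \to \star$ preserves all equivalences and level $h$-fibrancy. So $\sh{\,}$ is right-deformable with the level $h$-fibrant spectra as \fibrant objects.

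For \cref{fig:shadow_associator} and \cref{fig:pseudo_odot}, every list of functors that appears is built from $\sh{\,}$, $\boxtimes$, and $\odot$. I would take tuples of level $h$-fibrant spectra as the \fibrant objects at each stage. The multi-span \eqref{eq:odot} defining $\odot$ has a Hurewicz fibration as its pushforward map, so $\odot$ preserves level $h$-fibrancy; $\boxtimes$ preserves level $h$-fibrancy as above; and $\sh{\,}$ is right-deformable on level $h$-fibrant spectra. This makes each relevant list coherently right-deformable, and \cref{rd_descends_3} gives commutativity of these two diagrams on the homotopy category.

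The hard part is \cref{fig:shadow_unitor}, whose terms $\sh{X \odot U_A}$ and $\sh{U_A \odot X}$ involve the unit 1-cells, which are not level $h$-fibrant. Following the pattern of \cref{prop:ex_bicat}, I would apply \cref{source_of_confusion} twice: first to extend the \fibrant objects for $\odot$ to include pairs involving a unit (as in \cref{unit_preserves_equivalences}), and then to extend the \fibrant objects for $\sh{\,}$ to include spectra of the form $X \odot U_A$ and $U_A \odot X$ for $X$ level $h$-fibrant. The second extension requires checking that the shadow sends the \fibrant replacement map to an equivalence on such spectra, which I expect to follow from \cref{convenient:preserve_equivalences} after identifying the shadow of $X \odot U_A$ as the action of a composite multi-span whose relevant projection maps are Serre fibrations. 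Once these extensions are in place, the lists in \cref{fig:shadow_unitor} are coherently right-deformable and \cref{rd_descends_3} yields the desired commutativity.
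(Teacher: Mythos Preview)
Your approach matches the paper's for right-deformability of $\sh{\,}$ and $\boxtimes$ and for \cref{fig:shadow_associator,fig:pseudo_odot}: level $h$-fibrant tuples as \fibrant objects, preservation under $\odot$, $\boxtimes$, $\sh{\,}$, and then \cref{rd_descends_3}.

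For \cref{fig:shadow_unitor} you overcomplicate the argument. Your proposed second extension---enlarging the \fibrant objects for $\sh{\,}$ to include spectra of the form $X \odot U_A$ and $U_A \odot X$---is unnecessary, because the point-set unitor is an actual isomorphism $X \odot U_A \cong X$. Hence when $X$ is level $h$-fibrant, so is $X \odot U_A$, and it is already in the default \fibrant class for $\sh{\,}$. The paper handles the list
\[
\Osp^c(A \times A) \times (\ast) \xto{\id \times U_A} \Osp^c(A \times A) \times \Osp^c(A \times A) \xto{\odot} \Osp^c(A \times A) \xto{\sh{}} \Osp^c(\ast)
\]
by extending \emph{only} in the product category $\Osp^c(A \times A) \times \Osp^c(A \times A)$, to include pairs $(X,U_A)$ with $X$ level $h$-fibrant (via \cref{unit_preserves_equivalences}); the output of $\odot$ then lands in level $h$-fibrant spectra, and $\sh{\,}$ needs no modification. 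The paper's remark following the proof makes the point sharp: $\sh{U_A}$ and $\sh{PU_A}$ differ in homotopy type, so one cannot hope to add $U_A$ itself as a \fibrant object for $\sh{\,}$. Your route is not wrong---the ``extension'' you propose is vacuous---but the justification you sketch via \cref{convenient:preserve_equivalences} is aimed at a nonexistent obstacle.
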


\begin{proof}
We take the \fibrant objects for $\sh{\,}$ to be the level $h$-fibrant spectra.
Recall that $\sh{\,}$ is defined as the action of the multi-span in \cref{df:shadow_span}, so it is composed of an external smash product, a pullback, and a pushforward along a Hurewicz fibration. By \cref{convenient_main_thm}, each of these three steps preserves level $h$-fibrant spectra and stable equivalences between them, so $\sh{\,}$ is right-deformable, and, in addition, it preserves the \fibrant objects.

The \fibrant objects for $\boxtimes$ are tuples of level $h$-fibrant spectra.
Recall that $\boxtimes$ is composed of an $n$-fold external smash product and a pullback along an isomorphism, so by \cref{convenient_main_thm} it is also right-deformable and preserves \fibrant objects.

Since the radiant objects for the functors $\odot$, $\sh{\,}$ and $\boxtimes$ are the same and these functors preserve \fibrant objects, any composition of these functors is coherently right-deformable.  So \cref{rd_descends_2,rd_descends_3} allow us to pass from the isomorphisms $\theta$ and $m_\boxtimes$ to isomorphisms of derived functors $\ti\theta$ and $\ti m_\boxtimes$, satisfying the same coherences in the homotopy category that the original isomorphisms satisfied in the point-set category (\cref{fig:shadow_associator,fig:pseudo_odot}).

For \cref{fig:shadow_unitor}, the level $h$-fibrant spectra do not suffice, because they are not preserved by the functors in the following composite.
\begin{equation}
\begin{aligned}
\Osp^c(A \times A) \times {(*)} &\xto{\id \times U_A} \Osp^c(A \times A) \times \Osp^c(A \times A) \\
	& \xto{\odot} \Osp^c(A \times A) \\
	& \xto{\sh{}} \Osp^c(*)
\end{aligned}\label{eq:unit_shadow}
\end{equation}
As in the proof of \cref{prop:ex_bicat}, we fix this by extending the collection of \fibrant objects in $\Osp^c(A \times A) \times \Osp^c(A \times A)$ to include pairs where the first spectrum is level $h$-fibrant and the second spectrum is the unit $U_A$. (We do not include the unit $U_A$ as a \fibrant object in any of the other copies of $\Osp^c(A \times A)$.) This shows that the list in \eqref{eq:unit_shadow}, and each of the other lists appearing in \cref{fig:shadow_unitor}, is coherently right-deformable.
\end{proof}

\begin{rmk}
In the point set category, $\sh{PU_A}$ and $\sh{U_A}$ do not have the same homotopy type -- one gives the suspension spectrum of the free loop space $\Sigma^\infty_+ LA$, while the other gives only the constant loops $\Sigma^\infty_+ A$. So, unlike $\odot$, we cannot expand the \fibrant objects to include the unit $U_A$ in every copy of $\Osp^c(A \times A)$ above.
\end{rmk}

Recall that the collection of \fibrant objects for $\odot$ from \cref{odot_deformable} consists of pairs of level $h$-fibrant spectra.

\begin{lem}\label{base_change_preserves_equivalences}
The collection of \fibrant objects for $\odot$ from \cref{odot_deformable}  can be extended to include pairs where one or both of the spectra in the pair are base change 1-cells  $\bcr{A}{f}{B}$.
\end{lem}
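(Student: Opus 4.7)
The plan is to mimic the strategy of Lemma \ref{unit_preserves_equivalences}, which handled the analogous statement for unit 1-cells. By Lemma \ref{source_of_confusion}, it suffices to check that for any pair $(X,Y)$ added to the collection, the canonical map $X \odot Y \to PX \odot PY$ is a stable equivalence, which we do by factoring through $PX \odot Y$.

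First I would record the structural observation. The base change 1-cell
$\bcr{A}{f}{B} = \Sigma^\infty_{+(B\times A)} A$
is the suspension spectrum of the space $A$ mapping to $B\times A$ via $(f,\id)$, and because $P$ is built from the standard fiberwise path-space construction, $P\bcr{A}{f}{B} \cong \Sigma^\infty_{+(B\times A)} \widetilde A$ where $\widetilde A \to B\times A$ is a Hurewicz fibration (so $P\bcr{A}{f}{B}$ is level $h$-fibrant). The key point is that both $A \to B\times A$ and $\widetilde A \to B\times A$ project to the second factor $A$ by a Hurewicz fibration: the identity in the first case, and the composite of two Hurewicz fibrations ($\widetilde A \to B\times A \to A$) in the second.

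Next, for a pair $(\bcr{A}{f}{B}, Y)$ with $Y\in\Osp^c(A\times C)$ level $h$-fibrant, consider
\[ \bcr{A}{f}{B} \odot Y \;\to\; P\bcr{A}{f}{B} \odot Y \;\to\; P\bcr{A}{f}{B} \odot PY. \]
The first arrow is a stable equivalence by Lemma \ref{convenient:preserve_equivalences}, using the first alternative of that lemma and the structural observation: the relevant shared coordinate is $A$, and projections of both $A$ and $\widetilde A$ to this coordinate are Hurewicz (hence Serre) fibrations. The second arrow is a stable equivalence by Lemma \ref{odot_deformable}, since both factors are now level $h$-fibrant. The symmetric case $(Y, \bcr{A}{f}{B})$ is handled identically, using the variant of Lemma \ref{convenient:preserve_equivalences} with the roles of the two inputs exchanged.

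Finally, for a pair $(\bcr{A_1}{f_1}{B_1},\bcr{A_2}{f_2}{B_2})$ of two base change cells, factor again as
\[ \bcr{A_1}{f_1}{B_1} \odot \bcr{A_2}{f_2}{B_2} \;\to\; P\bcr{A_1}{f_1}{B_1} \odot \bcr{A_2}{f_2}{B_2} \;\to\; P\bcr{A_1}{f_1}{B_1} \odot P\bcr{A_2}{f_2}{B_2}. \]
The first map is a stable equivalence by Lemma \ref{convenient:preserve_equivalences}, applied exactly as before using that the projections of $\bcr{A_1}{f_1}{B_1}$ and its replacement to the shared coordinate are Hurewicz fibrations, while the second is the case already handled since $P\bcr{A_1}{f_1}{B_1}$ is level $h$-fibrant. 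An application of Lemma \ref{source_of_confusion} completes the extension. The only real content is the structural observation about the projection to the shared coordinate; the main obstacle, if any, is keeping straight which coordinate of $B\times A$ is shared and verifying that path-space replacement preserves the Hurewicz fibrancy of the relevant projection, both of which are immediate from the definitions.
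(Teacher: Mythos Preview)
Your overall strategy matches the paper's proof exactly: factor the map $X\odot Y \to PX\odot PY$ through an intermediate step, invoke Lemma~\ref{convenient:preserve_equivalences} for one step, and use level $h$-fibrancy for the other, then finish with Lemma~\ref{source_of_confusion}. The case $(\bcr{A}{f}{B},Y)$ and the two-base-change case are argued correctly.

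There is one imprecision. For the symmetric case $(Y,\bcr{A}{f}{B})$ with $Y\in\Osp^c(C\times B)$ level $h$-fibrant, the shared coordinate is $B$, not $A$. Your structural observation concerns only the projection of $A$ (and its path replacement) to the \emph{second} factor $A$; the projection to the first factor $B$ is the map $f$, which need not be a fibration. So the first alternative of Lemma~\ref{convenient:preserve_equivalences} does \emph{not} apply ``identically'' here. The paper handles this case by invoking the \emph{second} alternative of that lemma: since $Y$ is level $h$-fibrant, each $Y_n\to C\times B$ is a Hurewicz fibration, hence so is $Y_n\to B$. This is what makes $Y\odot\bcr{A}{f}{B}\to Y\odot P\bcr{A}{f}{B}$ an equivalence. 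Your two-base-change argument relies on this symmetric case (in the second map, with $Y=P\bcr{A_1}{f_1}{B_1}$), so it inherits the same gap. Once you replace ``handled identically'' with this appeal to the other alternative, the proof is complete and matches the paper's.
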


\begin{proof}
This proof is similar to the proof of \cref{unit_preserves_equivalences}. 
We first prove that for any spectrum $Y$  and any level $h$-fibrant spectrum $X$ the following two maps are stable equivalences:
\begin{align}
\label{bccd2}
	\xymatrix{ \bcr{A}{f}{B} \odot Y \ar[r] & P\bcr{A}{f}{B} \odot Y }
\\[-0.3em]
\label{bccd1}
	\xymatrix{ X \odot \bcr{A}{f}{B} \ar[r] & X \odot P\bcr{A}{f}{B} }
\end{align} 
In each case, we tensor with the level $h$-fibrant replacement map.

As in the proof of \cref{unit_preserves_equivalences}, we recall that
\[ \bcr{A}{f}{B} = \Sigma^\infty_{+(B \times A)} A, \qquad P\bcr{A}{f}{B} = \Sigma^\infty_{+(B \times A)} (B^I \times_B A \times_A A^I). \]
The maps $\id\colon A \to A$ and $\ev_1\colon B^I \times_B A \times_A A^I \to A$ are both Serre fibrations, so by \cref{convenient:preserve_equivalences}, the map \eqref{bccd2} is a stable equivalence of parametrized spectra.

On the other hand, if $X \in \Osp^c(C \times B)$ is level $h$-fibrant, then the maps $X_n \to C \times B$ are Hurewicz fibrations, and therefore the projections $X_n \to B$ are Hurewicz fibrations as well. By \cref{convenient:preserve_equivalences}, the map \eqref{bccd1} is therefore a stable equivalence of parametrized spectra.

If $X$ is level $h$-fibrant, the maps
\[ \xymatrix{ X \odot \bcr{A}{f}{B} \ar[r]^-\sim & X \odot P\bcr{A}{f}{B} \ar[r]^-\sim & PX \odot P\bcr{A}{f}{B} } \]
are therefore both stable equivalences, so their composite is a stable equivalence. By \cref{source_of_confusion}, the subcategory of \fibrant objects can therefore be expanded to include all pairs of the form $\left(X,\bcr{A}{f}{B}\right)$.

Similarly, if $Y$ is level $h$-fibrant, the maps
\[ \xymatrix{ \bcr{A}{f}{B} \odot Y \ar[r]^-\sim & P\bcr{A}{f}{B} \odot Y \ar[r]^-\sim & P\bcr{A}{f}{B} \odot PY } \]
are both stable equivalences, so the subcategory of \fibrant objects can be expanded to include all pairs of the form $\left(\bcr{A}{f}{B},Y\right)$ as well.

Finally, if we have two base change 1-cells, we apply \eqref{bccd2} first, then \eqref{bccd1} to get the equivalence
\[ \xymatrix{ \bcr{B}{g}{C} \odot \bcr{A}{f}{B} \ar[r]^-\sim & P\bcr{B}{g}{C} \odot \bcr{A}{f}{B} \ar[r]^-\sim & P\bcr{B}{g}{C} \odot P\bcr{A}{f}{B}. } \]
Therefore the subcategory of \fibrant objects can be expanded to include all pairs of the form $\left(\bcr{B}{g}{C},\bcr{A}{f}{B}\right)$.
\end{proof}

\begin{rmk}
	We will only need \cref{base_change_preserves_equivalences} for pairs in which one spectrum is level $h$-fibrant and the other is a twisting object $T_{(A_i)}$ and for pairs in which both spectra are base change 1-cells.  Since we don't gain much simplification in the proof when considering only these cases, we included the more general result.
\end{rmk}

\begin{rmk}\label{pedantic:expansion} It is important to note that 
we do not think of the extended collection of radiant objects in \cref{base_change_preserves_equivalences} as a further extension of  
the collection from 
\cref{unit_preserves_equivalences}. Instead, we have two different expansions available to us. We are free to use either one at a given stage in a composite of right-deformable functors -- we are not required to use both at the same time.
\end{rmk}

\begin{lem}\label{ex_is_coherently_deformable_3}
For the derived functors of $\odot$, $\sh{\,}$ and $\boxtimes$ defined in \cref{odot_deformable,ex_is_coherently_deformable_2}
there are isomorphisms  
$\ti i_\boxtimes$, $\ti \vartheta$, and $\ti \tau$ satisfying the coherences in \cref{fig:pseudo_unit,fig:shadowed_odot,fig:shadowed_unit,fig:shadowed_twist}.
\end{lem}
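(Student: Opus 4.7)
The plan is to follow exactly the pattern established in \cref{pentagon_again}, \cref{prop:ex_bicat}, and \cref{ex_is_coherently_deformable_2}: for each isomorphism $\ti i_\boxtimes$, $\ti \vartheta$, $\ti \tau$ and each coherence diagram, I identify all lists of composable functors that appear, choose a collection of \fibrant objects in each factor, and verify the lists are coherently right-deformable. \cref{rd_descends_2,rd_descends_3} then automatically produce the derived isomorphisms from their point-set analogues and guarantee the coherences hold in the homotopy category. The derived isomorphisms themselves admit the explicit description of \cref{just_put_in_a_ton_of_rs}.

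For $\ti i_\boxtimes$, both sides are constant functors from the terminal category producing the spectra $U_{\prod A_i}$ and $\boxtimes U_{A_i}$. Since units are not level $h$-fibrant, I take the level $h$-fibrant spectra in each $\Osp^c(A_i \times A_i)$ expanded to include $U_{A_i}$, following the argument of \cref{unit_preserves_equivalences}; $\boxtimes U_{A_i} = U_{\prod A_i}$ at the level of the underlying multi-span, so there is no obstruction. For $\ti \vartheta$ the twisting 1-cells $T_{(A_i)} = \bcr{\prod A_i}{\cong}{\prod A_{i+1}}$ are base-change 1-cells, so I use the expansions provided by \cref{base_change_preserves_equivalences} at each copy of $\odot$ whose input involves $T_{(A_i)}$, keeping the other input as level $h$-fibrant or as another base-change 1-cell as needed. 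For $\ti \tau$, the shadow is right-deformable on level $h$-fibrant spectra by \cref{ex_is_coherently_deformable_2}, and the inputs to $\sh{\,}$ in \eqref{eq:vartheta} are obtained by composing level $h$-fibrant spectra with base-change 1-cells, which remain level $h$-fibrant under the discussion of the multi-span action in \cref{sec:point_set}, so the existing \fibrant objects plus a base-change expansion suffices.

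For the coherence diagrams, I check that each list of composable functors appearing in them is coherently right-deformable with a uniform choice of \fibrant objects, noting (per \cref{pedantic:expansion}) that I am free to choose a different expansion in each list. In \cref{fig:pseudo_unit} I use the level $h$-fibrant objects together with the unit expansion only in the factor of $\Osp^c$ where $U_{A_i}$ appears. In \cref{fig:shadowed_unit} I use the unit expansion together with the base-change expansion for the copies of $\Osp^c$ holding the twisting 1-cell. In \cref{fig:shadowed_odot} only the base-change expansion is needed, at the copies of $\odot$ where $T_{(A_i)}$, $T_{(B_i)}$, or $T_{(C_i)}$ appear as an input. In \cref{fig:shadowed_twist} the shadow is applied last in every composite, so I extend the \fibrant objects for $\odot$ at every appearance to the base-change expansion, and verify that the result of each composite is level $h$-fibrant so that $\sh{\,}$ preserves equivalences between these outputs.

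The main obstacle is bookkeeping in \cref{fig:shadowed_twist}: the diagram is long, and each vertex features a nested composite involving the external product, several factors of $\odot$ with base-change 1-cells, and finally the shadow. To apply \cref{rd_descends_3} uniformly I must exhibit \emph{a single} coherent list of right-deformable functors for every path through the diagram, which forces me to verify that my chosen expansion of \fibrant objects along each intermediate composite is preserved by the next functor in line. The crucial point to check is that tensoring with $T_{(A_i)}$ or $T_{(B_i)}$ on either side sends level $h$-fibrant spectra into objects whose smash or pullback along the multi-span \eqref{eq:m_box_span} (or its analogues) remains level $h$-fibrant, so that the succeeding instance of $\boxtimes$ and $\sh{\,}$ continues to preserve equivalences. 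Once this is verified, the desired coherences in \cref{fig:pseudo_unit,fig:shadowed_odot,fig:shadowed_unit,fig:shadowed_twist} follow immediately from \cref{rd_descends_3} applied to the point-set coherences established in \cref{nfull_funct,nfull_nattran,nfull_iso}.
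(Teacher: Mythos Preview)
Your proposal is correct and follows essentially the same approach as the paper: expand the \fibrant objects beyond level $h$-fibrant spectra to include units and the twisting 1-cells $T_{(A_i)}$ (via \cref{unit_preserves_equivalences} and \cref{base_change_preserves_equivalences}), verify that each composite of functors is coherently right-deformable, and invoke \cref{rd_descends_2,rd_descends_3}. You also correctly isolate the key technical point, namely that $T_{(A_i)}\odot(-)$ preserves level $h$-fibrancy (because $T_{(A_i)}$ is a base-change object for a homeomorphism, hence a Serre fibration), which is exactly what the paper uses to propagate \fibrancy through the longer composites in \cref{fig:shadowed_odot,fig:shadowed_twist}.

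One place where the paper is sharper than your sketch is \cref{fig:shadowed_unit}. You say you ``use the unit expansion together with the base-change expansion,'' but \cref{pedantic:expansion} warns against mixing the two expansions at the same stage. The paper resolves this not by combining the two expansions of $\odot$, but by observing that after applying $\boxtimes$ the tuple of units becomes $\boxtimes U_{A_i}\cong U_{\prod A_i}$, which is itself a base-change 1-cell; thus the pair $(T_{(A_i)},\boxtimes U_{A_i})$ is a pair of base-change 1-cells and is covered by \cref{base_change_preserves_equivalences} alone. The \fibrant tuples are then those that are either entirely level $h$-fibrant or entirely ``special'' (twisting 1-cell, units, or their external product), and only the base-change expansion is needed for $\odot$. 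This is a minor refinement of your argument, not a different strategy.
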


\begin{proof}
The proof is essentially the same as that of \cref{pentagon_again,ex_is_coherently_deformable,ex_is_coherently_deformable_2}. To define the isomorphism in the homotopy category
\[ \ti i_{\boxtimes}\colon U_{\prod A_i} \simeq \boxtimes^\R U_{A_i}, \]
the relevant composite of functors is
\[ \xymatrix @C=3em{
(\ast) \ar[r]^-{(U_{A_i})} & \prod \Osp^c(A_i \times A_i) \ar[r]^-{\boxtimes} & \Osp^c((\prod A_i)\times (\prod A_i)).
} \]
The \fibrant objects consist of 
\begin{itemize}
\item the single object of $\ast$ and 
\item the tuples in $\prod \Osp^c(A_i \times A_i)$ in which either 
	\begin{itemize}
	\item every spectrum in the tuple is level $h$-fibrant or 
	\item every spectrum in the tuple is the unit $U_{A_i}$. 
\end{itemize}
\end{itemize}
We do not allow tuples that have some level $h$-fibrant spectra and some units. (Note that it is unnecessary to choose the \fibrant objects in the final category $\Osp^c(\prod A_i \times \prod A_i)$.)
The functor $\boxtimes = \barsmash$ preserves stable equivalences on these objects because it preserves all stable equivalences of freely $f$-cofibrant spectra. 
Then, as usual, \cref{rd_descends_2} defines the isomorphism $\ti i_\boxtimes$.

We now check that each of the lists of functors in \cref{fig:pseudo_unit} is coherently right-deformable.  Since all composites of functors are similar, we illustrate the argument with the composite in the bottom left corner of \cref{fig:pseudo_unit}:
\begin{align*}
(\ast) \times \prod \Osp^c(A_i \times B_i)
&\xto{U_{A_i}\times \id} \prod \Osp^c(A_i \times A_i) \times \prod \Osp^c(A_i \times B_i) \\
&\xto{\boxtimes \times \boxtimes} \Osp^c(\prod A_i \times \prod A_i)\times \Osp^c(\prod A_i \times \prod B_i) \\
&\xto{\odot} \Osp^c(\prod A_i \times B_i).
\end{align*}
Using the \fibrant objects and preservation properties above, we conclude by \cref{rd_descends_3} that the derived version of \cref{fig:pseudo_unit} also commutes.

To define the isomorphism 
\begin{equation}\label{derived_vartheta} \ti\vartheta\colon T_{(A_i)}\odot^\R (\boxtimes^\R M_{i}) \xto\simeq (\boxtimes^\R M_{i+1})\odot^\R T_{(B_i)},
\end{equation}
the relevant composite of functors is
\[ \xymatrix @C=3em{
(\ast) \times \prod \Osp^c(A_i \times B_i) \ar[r]^-{T_{(A_i)}\times \boxtimes} &
\Osp^c(\prod A_{i+1} \times \prod A_i)\times \Osp^c(\prod A_i \times \prod B_i) \ar[r]^-{\odot} &
\Osp^c(\prod A_{i+1}),
} \]
and a similar composite with $T_{(B_i)}$ on the right-hand side. In each of these categories, we take the \fibrant objects to be the tuples of level $h$-fibrant spectra, except in 
\[\Osp^c(\prod A_{i+1} \times \prod A_i),\]
 where we also allow the twisting 1-cell $T_{(A_i)}$. Since $T_{(A_i)}$ is a base change object for a homeomorphism it is, in particular, a base change object for a Serre fibration, so 
\[T_{(A_i)} \odot Y\]
 is level $h$-fibrant when $Y$ is level $h$-fibrant. This allows us to check that \fibrant objects are preserved by the above functors. By \cref{base_change_preserves_equivalences}, the stable equivalences between \fibrant objects are preserved as well, and so the above list of functors is coherently right-deformable. As before, \cref{rd_descends_2} therefore defines the map $\ti\vartheta$.

We similarly check that each of the lists of functors in \cref{fig:shadowed_odot} is coherently right-deformable. We illustrate the argument with the entry in the top left corner of \cref{fig:shadowed_odot}:
\begin{align*}
(\ast) \times \prod \Osp^c(A_i \times &B_i) \times \prod \Osp^c(B_i \times C_i)
\\
&\xto{T_{(A_i)}\times \boxtimes \times \boxtimes} \Osp^c(\prod A_{i+1} \times \prod A_i) \times \Osp^c(\prod A_i \times \prod B_i) \times \Osp^c(\prod B_i \times \prod C_i) \\
&\xto{\odot \times \id} \Osp^c(\prod A_{i+1} \times \prod B_i) \times \Osp^c(\prod B_i \times \prod C_i) \\
&\xto{\odot} \Osp^c(\prod A_{i+1} \times \prod C_i).
\end{align*}
Using the \fibrant objects and preservation properties above, we  conclude by \cref{rd_descends_3} that the derived version of \cref{fig:shadowed_odot} also commutes.

When checking that the lists in \cref{fig:shadowed_unit} are coherently right-deformable, we have to say a little more. We illustrate the argument with the composite in the bottom left corner of \cref{fig:shadowed_unit}:
\begin{align*}
(\ast) \times \prod (\ast)
&\xto{T_{(A_i)}\times \prod( U_{A_i} )} \Osp^c(\prod A_{i+1} \times \prod A_i) \times \prod \Osp^c(A_i \times A_i) \\
&\xto{\id \times \boxtimes} \Osp^c(\prod A_{i+1} \times \prod A_i) \times \Osp^c(\prod A_i \times \prod A_i) \\
&\xto{\odot} \Osp^c(\prod A_{i+1} \times \prod A_i).
\end{align*}
The \fibrant objects are those tuples of spectra in which 
\begin{itemize}
\item each spectrum is level $h$-fibrant, or 
\item every spectrum in the tuple is the ``special'' one -- either 
\begin{itemize}
\item  the twisting 1-cell $T_{(A_i)}$ in the category $\Osp^c(\prod A_{i+1} \times \prod A_i)$, 
\item 
 the unit $U_{A_i}$ in $\Osp^c(A_i \times A_i)$ for \emph{every} value of $i$, or
\item  the product $\boxtimes U_{A_i}$ in $\Osp^c(\prod A_i \times \prod A_i)$. 
\end{itemize}
\end{itemize}
These \fibrant objects are all preserved by construction, and the stable equivalences between them are preserved by \cref{base_change_preserves_equivalences}. We conclude by \cref{rd_descends_3} that the derived version of \cref{fig:shadowed_odot} also commutes.

Finally, to define the isomorphism
	\[ \ti\tau\colon \sh{T_{(A_{i-1})}\odot^\R \boxtimes^\R Q_i}^\R \xto\cong \sh{Q_1\odot^\R\ldots\odot^\R Q_n}^\R, \]  
the relevant composites of functors are
\begin{align*}
(\ast) \times \prod \Osp^c(A_{i-1} \times A_{i})
&\xto{T_{(A_{i-1})}\times \boxtimes} \Osp^c(\prod A_{i} \times \prod A_{i-1}) \times \Osp^c(\prod A_{i-1} \times \prod A_{i}) \\
&\xto{\odot} \Osp^c(\prod A_{i} \times \prod A_{i}) \\
&\xto{\sh{}} \Osp^c(\ast), \\
\prod \Osp^c(A_{i-1} \times A_{i})
&\xto{\odot^{(n-1)}} \Osp^c(A_1 \times A_1) \\
&\xto{\sh{}} \Osp^c(\ast).
\end{align*}
Here the subscripts on $A$ are taken mod $n$. Again, we take the \fibrant objects in each category to be the level $h$-fibrant spectra and the twisting 1-cell $T_{(A_i)}$ in $\Osp^c(\prod A_{i+1} \times \prod A_i)$. These \fibrant objects and the stable equivalences between them are preserved, as before, by \cref{base_change_preserves_equivalences} and the fact that $T_{(A_i)} \odot -$ preserves level $h$-fibrant spectra. We see by a similar choice of \fibrant objects that each list of functors in \cref{fig:shadowed_twist} is coherently right-deformable. We conclude by \cref{rd_descends_3} that the derived version of \cref{fig:shadowed_twist} also commutes.
\end{proof}

Together \cref{ex_is_coherently_deformable_2,ex_is_coherently_deformable_3} show $\Ex$ has a shadowed $n$-Fuller structure. 

\subsection{Base change objects}\label{sec:bc}
It only remains to produce the natural transformations and coherences for a (derived) system of base change objects.

\begin{lem}\label{ex_is_coherently_deformable_4}
For the derived functors of $\odot$, $\sh{\,}$ and $\boxtimes$ defined in \cref{odot_deformable,ex_is_coherently_deformable_2}, 
there are isomorphisms $\ti m_{[]}$, $\ti i_{[]}$,  and $\ti \pbc$ satisfying the coherences in 
\cref{fig:base_change_1,fig:base_change,fig:bc_compatibility}.
\end{lem}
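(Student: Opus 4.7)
The plan is to follow the template of \cref{ex_is_coherently_deformable_2,ex_is_coherently_deformable_3}. For each structure map ($m_{[]}$, $i_{[]}$, $\pbc$) and each coherence diagram in \cref{fig:base_change_1,fig:base_change,fig:bc_compatibility}, I would write down the relevant composites of functors ending in $\odot$ or $\boxtimes$, choose radiant subcategories in every intermediate category so that the relevant 1-cells lie inside, so that the radiant objects are preserved by the next functor, and so that weak equivalences between them are preserved. Once each list is coherently right-deformable, \cref{rd_descends,rd_descends_2} produce the derived isomorphism and \cref{rd_descends_3} yields the derived coherence.

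Two of the three isomorphisms are straightforward. The point-set unit $i_{[]}$ is literally the identity $U_A = \bcr{A}{\id}{A}$, so $\ti i_{[]}$ requires no work. For $\ti m_{[]}$, the relevant list is $\Osp^c(C \times B) \times \Osp^c(B \times A) \xto{\odot} \Osp^c(C \times A)$, with radiant objects the pairs of base change 1-cells allowed by \cref{base_change_preserves_equivalences}; the pseudofunctor axioms in \cref{fig:base_change_1} then descend by \cref{rd_descends_3}. For $\ti\pbc$, since $\boxtimes$ preserves all stable equivalences of freely $f$-cofibrant spectra, right-deformability of $\prod_i \Osp^c(B_i \times A_i) \xto{\boxtimes} \Osp^c(\prod B_i \times \prod A_i)$ is automatic, and the vertical diagrams \cref{fig:vert_odot,fig:vert_unit} are handled by choosing radiant objects that contain the relevant base change 1-cells (and units where they appear), using \cref{base_change_preserves_equivalences,unit_preserves_equivalences} separately rather than simultaneously (cf.\ \cref{pedantic:expansion}).

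The real obstacle is \cref{fig:bc_compatibility}, which simultaneously involves $\vartheta$, $\pbc$, $m_{[]}$, the twisting 1-cell $T_{(B_i)}$, the base change 1-cells $\bcr{E_i}{p_i}{B_i}$, and their $\boxtimes$-products. The plan is to take as radiant objects, in each intermediate category, the subcategory containing level $h$-fibrant spectra together with whichever twisting or base-change 1-cells actually appear at that position in the diagram. Preservation then follows because $T_{(A_i)}\odot -$ on level $h$-fibrant inputs is again level $h$-fibrant (as already used in the proof of \cref{ex_is_coherently_deformable_3}), because $\odot$ applied to a pair involving a base-change 1-cell preserves equivalences by \cref{base_change_preserves_equivalences}, and crucially because $T_{(B_i)}\odot \bcr{\prod E_i}{\prod p_i}{\prod B_i}$ is itself a base-change 1-cell for $\Gamma \circ \prod p_i$, exactly as exploited in \cref{base_change:equality}. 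With these radiant subcategories in place, every list in \cref{fig:bc_compatibility} is coherently right-deformable, and \cref{rd_descends_3} delivers the derived compatibility. I expect the main challenge to be the bookkeeping of enumerating the intermediate categories and specifying which 1-cells must be adjoined at each vertex of \cref{fig:bc_compatibility}, rather than any conceptual difficulty beyond those already handled in \cref{ex_is_coherently_deformable_3}.
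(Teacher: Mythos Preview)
Your proposal is correct and follows essentially the same approach as the paper's proof: for each structure map and coherence diagram you identify the relevant composites, choose radiant subcategories by adjoining the specific base-change or twisting 1-cells that appear (using \cref{base_change_preserves_equivalences}), and invoke \cref{rd_descends,rd_descends_3}. The paper organizes the radiant objects slightly differently---at each stage it takes tuples that are \emph{either} all level $h$-fibrant \emph{or} all the ``special'' 1-cell appearing at that position, rather than the fuller expansion you describe---and it highlights that $\boxtimes \bcr{E_i}{p_i}{B_i}$ is itself isomorphic to a base-change 1-cell (complementing your observation about $T_{(B_i)}\odot \bcr{\prod E_i}{\prod p_i}{\prod B_i}$), but these are minor packaging differences rather than a different argument.
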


\begin{proof}
To define the isomorphism 
\begin{equation*}
	\ti m_{[]}\colon \bcr{B}{g}{C} \odot^\R \bcr{A}{f}{B} \simeq \bcr{A}{gf}{C},
\end{equation*}
one of the relevant composites of functors is
\[ \xymatrix @C=3em{
(\ast) \times (\ast) \ar[r]^-{[g] \times [f]} &
\Osp^c(C \times B) \times \Osp^c(B \times A) \ar[r]^-{\odot} & \Osp^c(C \times A).
} \]
We take the \fibrant objects to be those pairs $(X,Y) \in \Osp^c(C \times B) \times \Osp^c(B \times A)$ such that either 
\begin{itemize}
\item $X$ and $Y$ are both the base change objects 
or
\item  they are both level $h$-fibrant. 
\end{itemize}
The product $\odot$ preserves stable equivalences on these pairs by \cref{convenient_main_thm,base_change_preserves_equivalences}. The other composite is similar. Therefore this list is coherently right-deformable, so we get the derived isomorphism $\ti m_{[]}$ by \cref{rd_descends_2}.

For the isomorphism
\begin{equation*}
	\ti i_{[]}\colon U_A \simeq \bcr{A}{\id}{A},
\end{equation*}
there is nothing to check, because  a functor of the form $(\ast) \to \Osp^c(A \times A)$ is automatically coherently right-deformable. We therefore get the derived isomorphism $\ti i_{[]}$, which is the same as the previous identification $i_{[]}$ on the point-set level.

We check that each of the lists of functors in \cref{fig:bc_odot,fig:bc_unit_l,fig:bc_unit_r} is coherently right-deformable. We illustrate the argument with the composite in the top left corner of \cref{fig:bc_odot}:
\begin{align*}
(\ast) \times (\ast) \times (\ast)
&\xto{[h] \times [g] \times [f]} \Osp^c(D \times C) \times \Osp^c(C \times B) \times \Osp^c(B \times A) \\
&\xto{\odot \times \id} \Osp^c(D \times B) \times \Osp^c(B \times A) \\
&\xto{\odot} \Osp^c(D \times A).
\end{align*}
We check this is coherently right-deformable by taking the \fibrant objects to be those tuples in which 
\begin{itemize}
\item every spectrum in the tuple is the appropriate base change object or composition of base change objects, or
\item every spectrum in the tuple is level $h$-fibrant. 
\end{itemize}
The \fibrant objects are preserved by construction and the stable equivalences are preserved by \cref{convenient_main_thm,base_change_preserves_equivalences}.
The other lists follow in a similar way. We conclude by \cref{rd_descends_3} that the derived versions of \cref{fig:bc_odot,fig:bc_unit_l,fig:bc_unit_r} also commute.

To define the isomorphism 
\begin{equation*}
	\ti \pbc\colon \boxtimes^\R \bcr{A_i}{f_i}{B_i} \simeq \bcr{\prod A_i}{\prod f_i}{\prod B_i} 
\end{equation*}
one of the relevant composites of functors is
\[ \xymatrix @C=3em{
\prod (\ast) \ar[r]^-{\prod [f_i]} &
\prod \Osp^c(B_i \times A_i) \ar[r]^-{\boxtimes} & \Osp^c(\prod B_i \times \prod A_i).
} \]
We take the \fibrant objects to be those tuples $(X_i) \in \prod \Osp^c(B_i \times A_i)$ such that either 
\begin{itemize}
	\item every $X_i$ is the base change object $\bcr{A_i}{f_i}{B_i}$, or 
	\item every $X_i$ is level $h$-fibrant. 
\end{itemize}
The product $\boxtimes$ preserves stable equivalences on these objects because it preserves all stable equivalences of freely $f$-cofibrant spectra. The other composite is similar.  Therefore this list is coherently right-deformable, so we get the derived isomorphism $\ti \pbc$ by \cref{rd_descends_2}.  

We check that each of the lists of functors in \cref{fig:vert_odot,fig:vert_unit,fig:bc_compatibility} is coherently right-deformable. We illustrate the argument with the composite of functors in the top left corner of \cref{fig:bc_compatibility}:  
\begin{align*}
(\ast) \times \prod (\ast)
&\xto{T_{(B_i)} \times \prod [p_i]} \Osp^c(\prod B_{i+1} \times \prod B_i) \times \prod \Osp^c(B_i \times E_i) \\
&\xto{\id \times \boxtimes} \Osp^c(\prod B_{i+1} \times \prod B_i) \times \Osp^c(\prod B_i \times \prod E_i) \\
&\xto{\odot} \Osp^c(\prod B_{i+1} \times \prod E_i).
\end{align*}
The \fibrant objects are tuples of spectra in which 
\begin{itemize}
\item every spectrum is 
 level $h$-fibrant, or else 
\item every spectrum in the tuple is the ``special'' one -- either 
\begin{itemize}
\item the twisting 1-cell $T_{(B_i)}$ in $\Osp^c(\prod B_{i+1} \times \prod B_i)$, 
\item the base change 1-cells $\bcr{E_i}{p_i}{B_i}$ in $\prod \Osp^c(B_i \times E_i)$, or
\item  the product of base change 1-cells $\boxtimes \bcr{E_i}{p_i}{B_i}$ in $\Osp^c(\prod B_i \times \prod E_i)$.
\end{itemize}
\end{itemize}
Radiant objects are preserved by construction, and stable equivalences between them are preserved by \cref{convenient_main_thm,base_change_preserves_equivalences} (and the fact that the product $\boxtimes \bcr{E_i}{p_i}{B_i}$ is itself isomorphic to a base change 1-cell).

The other lists follow in a similar way. We conclude by \cref{rd_descends_3} that the derived versions of \cref{fig:vert_odot,fig:vert_unit,fig:bc_compatibility} also commute.
\end{proof}

\begin{cor}
	$\Ex$ has a shadowed $n$-Fuller structure and a compatible system of base change objects indexed by $\cat{Top}$.
\end{cor}

\section{The cases of $G\Ex$, $\Ex_B$, and $G\Ex_B$}\label{sec:GExEx_B}

\subsection{Structures on the bicategory $G\Ex$}
Let $G$ be a finite group. Recall from \cite[Thm 7.4.4]{convenient} that there is a $G$-equivariant version of the symmetric monoidal bifibration of parametrized orthogonal spectra, which we denote $G\Osp$, living over the category $G\Top$ of spaces with left $G$-action and $G$-equivariant maps between them. For any $G$-space $B$, the category $G\Osp(B)$ consists of orthogonal $G$-spectra over $B$, and together these have external smash products $\barsmash$, pullbacks $f^*$, and pushforwards $f_!$, with all of the same properties that we described for $\Osp$ in \cref{sec:rigid}. In fact, these are the same constructions as before, with the most obvious $G$-action placed on them. There is also a subcategory of freely $f$-cofibrant spectra $G\Osp^c(B)$ and a notion of level $h$-fibrant spectra, satisfying all of the same properties as in \cref{convenient_main_thm}.

From this we define the point-set bicategory $G\Osp^c/G\Top$ just as before: the 0-cells are $G$-spaces $A$, and the 1-cells and 2-cells are given by the categories $G\Osp^c(A \times B)$ of freely $f$-cofibrant parametrized orthogonal $G$-spectra over the product space $A \times B$. We then define homotopy bicategory $G\Ex$ by inverting the notion of stable equivalence given in \cite[7.4]{convenient}. Informally, this is a genuine notion of stable equivalence, defined using the levels of the spectrum indexed by the orthogonal $G$-representations (\cite[7.4.1]{convenient}).

We therefore have a $G$-equivariant version of the homotopy bicategory of parametrized spectra, denoted $G\Ex$ \cite[7.4.5]{convenient}. We may prove that $G\Ex$ is a bicategory with all of the same structure that we constructed above for $\Ex$. The proofs are the same, with one minor change: in the equivariant version of the rigidity theorem (\cref{rigidity}), the natural automorphism of the action of a multi-span is no longer unique. However, there is still a unique automorphism that is natural with respect to the larger category $G\Osp(B)^{\textup{non}}$ of all parametrized orthogonal $G$-spectra over $B$ and \emph{non-equivariant} maps between them over $B$ \cite[Thm 7.2.4]{convenient}.

All of the isomorphisms that we constructed in \cref{sec:point_set,apply_parametrized_spectra} are indeed natural with respect to the non-equivariant maps of parametrized spectra. Therefore the equivariant version of the rigidity theorem applies to them. This allows us to prove their coherence on the point-set level, and to deduce their coherence on the homotopy category, by exactly the same method as in \cref{sec:point_set,apply_parametrized_spectra}.

\begin{cor}
	$G\Ex$ has a shadowed $n$-Fuller structure and a compatible system of base change objects indexed by $G\Top$.
\end{cor}

\subsection{Structures on the fiberwise point-set bicategory $\Osp^c_{(B)}/\cat{Fib}_B$}

If we work relative to a base space $B$, more modifications are needed. Going back to the beginning, fix a base space $B$ and consider the 1-category $\cat{Fib}_B$, whose objects are Hurewicz fibrations $E \to B$ and whose maps are maps of spaces $E \to E'$ commuting with the projection to $B$. This is a cartesian monoidal category under the fiber product $(D \to B) \times (E \to B) = (D \times_B E \to B)$.

In the category $\cat{Fib}_B$ the diagonal map $\Delta$ and projection map $\pi$ change, since our products are taken relative to $B$:
\[ \Delta\colon A \to A \times_B A, \qquad \pi\colon A \to B. \]
When we restrict to the fiber over any one point of $B$, these become the same maps from before.

Consider the symmetric monoidal bifibration $\Osp_{(B)}$ of all parametrized orthogonal spectra over all spaces over $B$, defined in \cite[Def 6.3.1]{convenient}. As before, this consists of the following data.
\begin{itemize}
	\item A symmetric monoidal category $\Osp_{(B)}$ of all parametrized orthogonal spectra $X$ over base spaces $E$ with Hurewicz fibrations $E \to B$. The maps in this category are the same as in $\Osp$.  The monoidal product is different however: it is the external smash product rel $B$, denoted $\barsmash_B$. For a spectrum $X$ over $D$ and $Y$ over $E$, this product $X \barsmash_B Y$ is defined by pulling back $X \barsmash Y$ along the canonical map $D \times_B E \to D \times E$.
	\item A projection functor $\Phi\colon \Osp_{(B)} \to \cat{Fib}_B$ that remembers only the base space, and is strict symmetric monoidal.
	\item For every morphism $f\colon D \to E$ in $\cat{Fib}_B$, a pullback functor $f^*$ and $f_!$, which is in fact the same functor as before from $\Osp$, ignoring the projection map of the base space to $B$.
\end{itemize}
Since this new structure on $\Osp_{(B)}$ is defined in terms of the old one on $\Osp$, we get the same compatibilities that make $\Osp_{(B)}$ a symmetric monoidal bifibration: canonical isomorphisms $f^*(X \barsmash_B Y) \cong (f^*X) \barsmash_B (f^*Y)$ and  $f_!(X \barsmash_B Y) \cong (f_! X) \barsmash_B (f_!Y)$, and Beck-Chevalley isomorphisms $j_!k^* \cong h^*f_!$ for every pullback square of topological spaces over $B$ as in \eqref{diagonal_square}.\footnote{The fact that $f_!$ commutes with $\barsmash_B$ is subtle and uses the fact that the projection maps $E \to B$ are Hurewicz fibrations. See \cite[Prop 6.3.2]{convenient}.}

Now consider multi-spans in the category $\cat{Fib}_B$. In other words, all of the spaces have a Hurewicz fibration to $B$ and all of the maps are over $B$:
\begin{equation}\label{eq:multi_span_B} \xymatrix @R=5pt @!C=4em { 
		&E \ar[dl]_-f \ar[dr]^-{(g_1,\ldots,g_n)} \ar[dd]
		\\
		C \ar[rd] && A_1 \times_B \ldots \times_B A_n \ar[ld] \\
		& B
	}\end{equation}
As in \cref{df:multi_span}, we say that a multi-span in $\cat{Fib}_B$ is {\bf rigid} if the map $(f,g_1,\ldots,g_n)$ is injective. It doesn't matter whether we think of this as a map to the product $C \times A_1 \times \ldots \times A_n$ or a map to the fiber product $C \times_B A_1 \times_B \ldots \times_B A_n$ when measuring whether it is injective.

	The multi-span in \eqref{eq:multi_span_B} acts on the categories of orthogonal spectra as before, but now using the external smash product rel $B$:
\begin{equation}\label{action_of_multi_span_fiberwise}
\xymatrix @R=5pt {
	\prod_i \Osp^c_{(B)}(A_i) = \prod_i \Osp^c(A_i) \ar[r] & \Osp^c_{(B)}(C) = \Osp^c(C) \\
	(X_1,\ldots,X_n) \ar@{|->}[r] & f_!(g_1,\ldots,g_n)^*(X_1 \barsmash_B \ldots \barsmash_B X_n).
}
\end{equation}
Since $\barsmash_B$ is defined by composing $\barsmash$ with a pullback, this action is still composed of a smash product, a pullback, and a pushforward. Therefore it still preserves freely $f$-cofibrant spectra, and it follows from the rigidity theorem (\cref{rigidity}) that:
\begin{cor}\label{rigidity_2}
	If the multi-span \eqref{eq:multi_span_B} is rigid, then its action as defined in \eqref{action_of_multi_span_fiberwise} is rigid.
\end{cor}

We define the point-set bicategory $\Osp^c_{(B)}/\cat{Fib}_B$ as before:
\begin{itemize}
	\item The 0-cells are objects of $\cat{Fib}_B$, so they are Hurewicz fibrations $E$ over $B$.
	\item The 1- and 2-cells from $A$ to $C$ are the category $\Osp^c(A \times_B C)$ of freely $f$-cofibrant parametrized orthogonal spectra over the fiber product $A \times_B C$.
	\item The unit $U_A$ is the suspension spectrum of $A_{+(A \times_B A)}$ over $A \times_B A$. Equivalently, it is unique object produced by the action of the multi-span
	\[ \xymatrix @R=5pt { 
		&A\ar[dl]_-{\Delta}\ar[dr]^-{\pi}
		\\
		A \times_B A && B.
	}\]
	\item For $X \in \Osp^c(A \times_B E)$ and $Y \in \Osp^c(E \times_B C)$, the product $X \odot_B Y$ is the action of the multi-span
	\[ \xymatrix @R=5pt { 
		&A \times_B E \times_B C\ar[dl]_-{1\pi 1}\ar[dr]^-{1\Delta 1}
		\\
		A \times_B C && (A \times_B E) \times_B (E \times_B C).
	}\]
	\item The associator $\alpha$ is the unique natural isomorphism $(X \odot_B Y) \odot_B Z \cong X \odot_B (Y \odot_B Z)$. It exists by identifying both sides with the action of the multi-span
	\[ \xymatrix @R=5pt { 
		&A \times_B E \times_B C \times_B D\ar[dl]_-{1\pi \pi 1}\ar[dr]^-{1\Delta\Delta 1}
		\\
		A \times_B D && (A \times_B E) \times_B (E \times_B C) \times_B (C \times_B D).
	}\]
	Since this multi-span is rigid, by \cref{rigidity_2}, any two functors isomorphic to this action have a unique natural isomorphism between them.
	\item Similarly, the unitors are the unique isomorphisms $U_A \odot_B X \cong X \cong X \odot_B U_C$.
	\item The shadow $\sh{X}$ is defined to be the action of the multi-span
\[ \xymatrix @R=5pt { 
		&A\ar[dl]_-{\pi}\ar[dr]^-{\Delta}
		\\
		{B} && (A \times_B A).
	}\]
	\item The rotator isomorphism $\theta\colon \sh{X \odot_B Y} \cong \sh{Y \odot_B X}$ exists by comparing both actions to that of the rigid multi-span
	\[ \xymatrix @R=5pt { 
		&A \times_B E\ar[dl]_-{\pi \pi}\ar[dr]^-{\Delta\Delta}
		\\
		{B} && (A \times_B E) \times_B (E \times_B A).
	}\]
\end{itemize}

\begin{rmk}
Note that the object $U_A$ is not the same as the object we denote using this symbol in \cref{sec:point_set,apply_parametrized_spectra}, but we will rely on context to disambiguate.   We will have a similar ambiguity for the base change objects $T_{(A_i)}$ and we will also rely on context to distinguish.
\end{rmk}

\begin{prop}
	The above functors and natural isomorphisms make $\Osp^c_{(B)}/\cat{Fib}_B$ into a bicategory with shadow. Furthermore it has a shadowed $n$-Fuller structure in which $\boxtimes = \barsmash_B$, and base change objects indexed by the category $\cat{Fib}_B$.
\end{prop}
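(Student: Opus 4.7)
The proof plan is to transcribe the arguments of \cref{sec:point_set} verbatim, replacing ordinary products by fiber products over $B$, the one-point space by $B$ itself, and $\barsmash$ by $\barsmash_B$. The three enabling facts have already been recorded in the discussion preceding the proposition: (i) the action \eqref{action_of_multi_span_fiberwise} of a multi-span in $\cat{Fib}_B$ still decomposes as pushforward $\circ$ pullback $\circ$ external smash rel $B$, (ii) this composite preserves freely $f$-cofibrant spectra, and (iii) the rigidity theorem \cref{rigidity} still applies, with injectivity measured into the fiber product $C \times_B A_1 \times_B \ldots \times_B A_n$. So every rigid multi-span used in \cref{sec:point_set} has a direct rigid analog here, and the pattern ``both sides are the action of a single rigid multi-span, so the canonical isomorphism is unique and any diagram of such commutes automatically'' works without modification.

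Concretely, I would first verify the bicategory axioms: the pentagon axiom for $\odot_B$ follows by identifying all five composites with the action of the fiberwise analog of \eqref{eq:pentagon},
\[ \xymatrix @R=5pt {
    &A \times_B E \times_B C \times_B D \times_B F\ar[dl]_(.6){1\pi\pi\pi 1}\ar[dr]^-{1\Delta\Delta\Delta 1}
    \\
    A \times_B F && (A \times_B E) \times_B (E \times_B C) \times_B (C \times_B D) \times_B (D \times_B F),
}\]
and the triangle axiom from the fiberwise analog of \eqref{eq:triangle_multispan}. The shadow coherence axioms in \cref{fig:shadow_associator,fig:shadow_unitor} follow from the fiberwise analogs of the two spans used in the shadow proof, with $\star$ replaced by $B$ throughout.

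For the $n$-Fuller structure, the pseudofunctor $\boxtimes = \barsmash_B$, together with its composition and unit isomorphisms $m_\boxtimes$ and $i_\boxtimes$, come from the fiberwise analogs of \eqref{eq:m_box_span} and the unit span in \cref{nfull_funct}. The twisting 1-cells $T_{(A_i)}$ are the actions of the fiberwise analog of \eqref{defn_Ta} (now a map of fibrations over $B$), the pseudonatural transformation $\vartheta$ and the shadow twist $\tau$ come from the fiberwise versions of the spans in \cref{nfull_nattran,nfull_iso}, and the coherences of \cref{fig:shadowed,fig:shadowed1,fig:shadowed_twist} all follow from rigidity of the corresponding larger multi-spans. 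For base-change, a map $f\colon A \to C$ in $\cat{Fib}_B$ is sent to the action of the fiberwise version of \eqref{span:base_change}; the compatibility maps $m_{[]}$, $i_{[]}$, and $\pbc$ and the diagrams in \cref{fig:base_change_1,fig:base_change,fig:bc_compatibility} follow by rigidity exactly as in \cref{sec:base_change}.

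I do not expect a genuine obstacle: the single subtle point is that all ``diagonal'' and ``projection'' maps now land in fiber products over $B$, but this is exactly the modification already folded into the statement of rigidity, and the maps occurring in each multi-span are injective into those fiber products for the same combinatorial reason (appearance of an identity or a diagonal factor) as in the non-fiberwise case. Everything else is routine transcription of \cref{sec:point_set}.
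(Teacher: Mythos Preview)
Your proposal is correct and is precisely the paper's approach: the paper's proof is a single sentence stating that the arguments of \cref{sec:point_set} go through with every product replaced by a fiber product over $B$, and your write-up is simply a detailed unpacking of that sentence.
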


\begin{proof}
	The arguments are the same as those in \cref{sec:point_set}, where every product is now a fiber product over $B$.
\end{proof}

\subsection{Structures on the bicategory $\Ex_B$}

Our next goal is to invert the stable equivalences in each of the categories $\Osp^c_{(B)}(A \times_B C) = \Osp^c(A \times_B C)$, giving the fiberwise bicategory of parametrized spectra $\Ex_B$, and prove that all of these structures pass to $\Ex_B$.

Before we proceed, we need two lemmas. The first is a fiberwise version of \cref{convenient:preserve_equivalences}. Let $X \in \cat{Top}(A \times_B C)$ be an arbitrary topological space over $A \times_B C$. We can add a disjoint copy of $(A \times_B C)$ to $X$ and take its fiberwise suspension spectrum:
\[ \left( \Sigma^\infty_{+(A \times_B C)} X \right). \]
As before, the unit $U_A$, twisting object $T_{A_i}$, and base change object $\bcr{A}{f}{C}_B$ are all special cases of this construction.

\begin{lem}\label{convenient:preserve_equivalences_fiberwise}
Let $W,X\in \cat{Top}(A \times_B C)$, let $Y\in \Osp^c(C \times_B D)$, and let $f\colon W \to X$ be a weak homotopy equivalence. If the projections $W \to C$ and $X \to C$ are both Serre fibrations, or alternatively if each of the projections $Y_n \to C$ is a Serre fibration, then the map induced by $f$
\[ (\Sigma^\infty_{+(A \times_B C)} f)\odot \id \colon \left( \Sigma^\infty_{+(A \times_B C)} W \right) \odot_B Y \to \left( \Sigma^\infty_{+(A \times_B C)} X \right) \odot Y \]
is a stable equivalence of parametrized spectra over $A \times_B D$.
\end{lem}

\begin{proof}
As in \cref{convenient:preserve_equivalences}, the product $\left( \Sigma^\infty_{+(A \times_B C)} X \right) \odot_B Y$ is given at each spectrum level by the space-level circle product $X_{+(A \times_B C)} \odot_B Y_n$. By \cite[Lem 6.5.1]{convenient}, this preserves weak homotopy equivalences provided either $X \to C$ or $Y_n \to C$ is a Serre fibration, which is enough to conclude that $(\Sigma^\infty_{+(A \times_B C)} f)\odot \id$ is a stable equivalence.
\end{proof}

The next lemma helps us check the fibration condition from \cref{convenient:preserve_equivalences_fiberwise}.

\begin{lem}\label{weird_fibration}
	If $f\colon D \to E$ is any map of Hurewicz fibrations over $B$, the composite
\begin{equation}\label{parametrized_fibration} D \times_{(E \times_B D)} (E \times_B D)^I \xto{\pi 1} (E \times_B D)^I \xto{(\pi1)^I} D^I \xto{\textup{ev}_1} D \end{equation}
	is a Hurewicz fibration and homotopy equivalence.
\end{lem}

The product $D \times_{(E \times_B D)} (E \times_B D)^I$ is formed using evaluation at $0 \in I$, while the composite in \eqref{parametrized_fibration} uses evaluation at $1 \in I$.  The map $D\to E\times_BD$ is the graph of $f$.

\begin{proof}
A homotopy inverse to the map in \eqref{parametrized_fibration} is the map $d\mapsto (d,c_{f(d),d})$ where $c_{f(d),d}$ is the constant path at $(f(d),d)$.  The endomorphism of $D$ given by these maps is the identity.  The endomorphism of $D \times_{(E \times_B D)} (E \times_B D)^I $ given by these maps is homotopic to the identity by shrinking the path in $E\times_BD$.

To see that \eqref{parametrized_fibration} is a Hurewicz fibration, we first rewrite it as the composite
\begin{equation}\label{parametrized_fibration_2} D \times_{(E \times_B D)} (E \times_B D)^I \xto{\pi1} (E \times_B D)^I \xto{\textup{ev}_1} E \times_B D \xto{\pi1} D. \end{equation}
Now suppose we are given the solid arrows in the following commutative diagram:
\begin{equation}\label{fib:lifting_square}\xymatrix{X\ar[r]^-{f_1\times f_2}\ar[d]_{i_0}
	&D \times_{(E \times_B D)} (E \times_B D)^I\ar[d]^{\eqref{parametrized_fibration_2}}
	\\
	X\times I \ar[r]^-H\ar@{.>}[ur]^-{K_1\times K_2}
	&D.
}\end{equation}
To construct the dotted lift, we let $p\colon X \times I \to X$ be the projection, and define $K_1 := f_1 \circ p$. Then it suffices to find a lift $K_2$ in the following diagram, where $G\colon X \times I \to E \times_B D$ is the map defined by $(f \times \id) \circ f_1 \circ p$, or equivalently $\ev_0 \circ f_2 \circ p$.
\begin{equation}\label{fib:lifting_square_expanded}
\xymatrix @C=6em{X\ar[r]^-{f_2}\ar[dd]^{i_0}&(E\times_BD)^I\ar[d]^{(\ev_0,\ev_1)}
\\
&(E\times_BD) \times (E\times_BD)\ar[d]^-{11\pi 1}
\\
X\times I \ar@{.>}[uur]^{K_2}\ar[r]^-{(G,H)} &(E\times_BD) \times D
}
\end{equation}
The product of the two evaluation maps $(\ev_0,\ev_1)$ is a Hurewicz fibration since it is the dual of the inclusion $\partial I \to I$. The projection $\pi 1\colon E\times_BD\to D$ is also a Hurewicz fibration since it is a pullback of $E\to B$.  
Therefore the lift $K_2$ exists. 
\end{proof}

As in \cref{apply_parametrized_spectra}, the main thing we need to pass to the homotopy category is to select collections of \fibrant objects that make each list of functors coherently deformable.

As in \cref{odot_deformable}, the pairs of level $h$-fibrant spectra are a collection of \fibrant objects for the functor $\odot_B$.  (For a spectrum in $\Osp^c(A \times_B C)$, these will be level $h$-fibrant with respect to the base space $A\times_BC$.)
\begin{lem}\label{fib_source_of_confusion}
As in \cref{unit_preserves_equivalences,base_change_preserves_equivalences},
this collection of \fibrant objects for $\odot_B$ can be extended to contain either
\begin{itemize}
\item pairs in which one spectrum is a unit $U_A$, 
\item pairs in which one spectrum is a twisting 1-cell $T_{(A_i)}$, or 
\item pairs in which both spectra are base change 1-cells $\bcr{A}{f}{C}_B$.
\end{itemize}
\end{lem}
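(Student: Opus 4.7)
The plan is to adapt the proofs of \cref{unit_preserves_equivalences,base_change_preserves_equivalences} to the fiberwise setting, substituting \cref{convenient:preserve_equivalences_fiberwise} for \cref{convenient:preserve_equivalences} and invoking \cref{weird_fibration} to verify the Serre fibration hypothesis on the level $h$-fibrant replacements. In each bullet, to apply \cref{source_of_confusion} I will factor the comparison map to the level $h$-fibrant replacement of $\odot_B$ through an intermediate step and check that each stage is a stable equivalence.

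The key setup is that each special 1-cell is a fiberwise suspension spectrum of a space over the appropriate relative product:
\[ U_A = \Sigma^\infty_{+(A \times_B A)} A, \qquad \bcr{A}{f}{C}_B = \Sigma^\infty_{+(C \times_B A)} A, \]
with $A$ included by the fiberwise diagonal and the graph of $f$ respectively, and $T_{(A_i)}$ is the base-change 1-cell for the shuffle homeomorphism $\prod_B A_i \cong \prod_B A_{i+1}$. For the level $h$-fibrant replacement I take the fiberwise graph-path-space $A \times_{(C \times_B A)} (C \times_B A)^I$, whose projection to $A$ is a Hurewicz fibration and homotopy equivalence by \cref{weird_fibration}. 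In particular, both the original total space (via the identity on the second coordinate of the graph) and its replacement (via \cref{weird_fibration}) project to the appropriate ``middle'' factor of the relative product via Serre fibrations, supplying the hypothesis of \cref{convenient:preserve_equivalences_fiberwise}.

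For the unit and twisting bullets, given a pair $(U_A,X)$ with $X$ level $h$-fibrant I factor
\[ U_A \odot_B X \to PU_A \odot_B X \to PU_A \odot_B PX. \]
The first map is a stable equivalence by \cref{convenient:preserve_equivalences_fiberwise} with the Serre fibration condition just observed, and the second is a stable equivalence because $PU_A$ is level $h$-fibrant and the pair therefore lies in the original subcategory of \fibrant objects for $\odot_B$. \cref{source_of_confusion} then expands the subcategory; the symmetric pair $(X,U_A)$ and the twisting 1-cell case (which is formally the same, since $T_{(A_i)}$ is a base-change object for an isomorphism) are handled identically.

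For the base-change bullet, neither factor is level $h$-fibrant, so I apply \cref{convenient:preserve_equivalences_fiberwise} twice, once to each factor, via the factorization
\[ \bcr{C}{g}{D}_B \odot_B \bcr{A}{f}{C}_B \to P\bcr{C}{g}{D}_B \odot_B \bcr{A}{f}{C}_B \to P\bcr{C}{g}{D}_B \odot_B P\bcr{A}{f}{C}_B. \]
The first step uses the Serre fibration condition on the total spaces of $\bcr{C}{g}{D}_B$ and its replacement (both projecting to the middle $C$), which holds by the setup above. For the second step, $P\bcr{C}{g}{D}_B$ is now level $h$-fibrant, and composing this with the structural fibration $D \to B$ of $\cat{Fib}_B$ shows that its spaces project to the middle $C$ via Hurewicz fibrations, so the alternative $Y_n$-condition in \cref{convenient:preserve_equivalences_fiberwise} applies. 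The one point requiring care is keeping straight which total space projects to which factor at each stage, but beyond this bookkeeping I do not foresee a genuine obstacle: \cref{weird_fibration} has been arranged precisely to supply the Serre fibration condition that the fiberwise argument requires.
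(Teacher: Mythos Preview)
Your argument is correct. For the unit and twisting bullets it matches the paper's proof essentially verbatim: the paper also checks that $U_A \odot_B X \to PU_A \odot_B X$ is an equivalence by verifying that the two projections to $A$ (identity and the path-space projection) are Hurewicz fibrations via \cref{weird_fibration}, and handles $T_{(A_i)}$ the same way.

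For the third bullet your route differs from the paper's. You adapt \cref{base_change_preserves_equivalences} directly: replace the left factor first using the $W,X$-side hypothesis of \cref{convenient:preserve_equivalences_fiberwise} (the projections $C \to C$ and $C \times_{(D\times_B C)}(D\times_B C)^I \to C$ are fibrations by \cref{weird_fibration}), then replace the right factor using the $Y_n$-side hypothesis (now $P\bcr{C}{g}{D}_B$ is level $h$-fibrant over $D\times_B C$, and $D\times_B C \to C$ is a fibration as a pullback of $D \to B$; your phrase ``composing with the structural fibration $D\to B$'' should really read ``composing with its pullback $D\times_B C \to C$''). The paper instead observes that both sides of the comparison are suspension spectra, reduces to spectrum level $0$, and checks directly that the inclusion-of-constant-paths map
\[ A \to C \times_{(D \times_B C)} (D \times_B C)^I \times_C (C \times_B A)^I \times_{(C \times_B A)} A \]
is a homotopy equivalence, using \cref{weird_fibration} to build an explicit inverse. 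Your approach is a cleaner parallel to the non-fiberwise argument and avoids unpacking the product at level $0$; the paper's approach avoids invoking the $Y_n$-hypothesis and makes the equivalence completely explicit. Both are valid.
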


As in \cref{pedantic:expansion}, we do not need to extend the collection of \fibrant objects to contain all of these simultaneously. 

\begin{proof}
For the first case where one spectrum is a unit $U_A$, as in the proof of \cref{unit_preserves_equivalences}, it is enough to check that the map
\begin{align}
	U_A \odot_B X &\xto\sim  (PU_A) \odot_B X \label{fibrant_unit_fiberwise_1}
\end{align}
is a stable equivalence. Recall that in the fiberwise setting over $B$, we have
\[ U_A = \Sigma^\infty_{+(A \times_B A)} A, \qquad PU_A = \Sigma^\infty_{+(A \times_B A)} (A \times_{(A \times_B A)} (A \times_B A)^I). \]
By \cref{convenient:preserve_equivalences_fiberwise}, it is enough to check that the following two maps are Serre fibrations:
\begin{align}
	A &\xto\id A,\label{eq:complicated_0}
	\\
	A \times_{(A \times_B A)} (A \times_B A)^I \to (A \times_B A)^I &\xto{\pi 1} A^I \xto{\textup{ev}_1} A. \label{eq:complicated_1}
\end{align}
But the first is an identity map, and the second is the special case of \cref{weird_fibration} for the identity map $\id\colon A\to A$. Therefore they are both Hurewicz fibrations, so \eqref{fibrant_unit_fiberwise_1} is a stable equivalence.

For the second case where one spectrum is a twisting 1-cell $T_{(A_i)}$, we follow the same proof as the unit case, rather than following the proof of \cref{base_change_preserves_equivalences}. It suffices to prove that the maps
\begin{align}
 (\prod_B A_i) &\xto\id (\prod_B A_i),\label{eq:complicated_3}
\\ 
(\prod_B A_i) \times_{(\prod_B A_{i+1} \times_B \prod_B A_i)} (\prod_B A_{i+1} \times_B \prod_B A_i)^I &\to (\prod_B A_{i+1} \times_B \prod_B A_i)^I \xto{\pi 1} (\prod_B A_i)^I \xto{\textup{ev}_1} (\prod_B A_i) \label{eq:complicated_2}
\end{align}
are Serre fibrations. This follows once again as a special case of \cref{weird_fibration}.

Finally, for the case of two base change objects it is enough to check that the map
\begin{align*}
	\bcr{C}{g}{D}_B \odot_B \bcr{A}{f}{C}_B &\xto{\sim}  P\bcr{C}{g}{D}_B \odot_B P\bcr{A}{f}{C}_B\label{fibrant_unit_fiberwise_6}
\end{align*}
is a stable equivalence of parametrized spectra. Using \cite[Remark 6.4.7]{convenient} and the fact that $P$ commutes with suspension spectra, this is a map of suspension spectra. Therefore, it suffices to focus attention at spectrum level zero and ignore the disjoint basepoint sections. This gives the inclusion of constant paths map 
\begin{equation}\label{big_ol_constant_paths}
A \xto\sim C \times_{(D \times_B C)} (D \times_B C)^I \times_C (C \times_B A)^I \times_{(C \times_B A)} A.
\end{equation}
To see this is a homotopy equivalence, 
consider the composite
\begin{align*}
C \times_{(D \times_B C)} (D \times_B C)^I \times_C (C \times_B A)^I \times_{(C \times_B A)} A&\to C \times_C (C \times_B A)^I \times_{(C \times_B A)} A
\\
&\cong (C \times_B A)^I \times_{(C \times_B A)} A
\\
&\to A
\end{align*}
where the first piece applies the map in \cref{weird_fibration} to the part to the left of the $\times_C$, the second removes the redundant $C$, and the third applies the ``reflection'' of the map in \cref{weird_fibration}.  These three maps are homotopy equivalences, and their composition is a left inverse to \eqref{big_ol_constant_paths}. Therefore \eqref{big_ol_constant_paths} is a homotopy equivalence as well.
\end{proof}

Now we define $\Ex_B$ by taking the point-set bicategory $\Osp^c_{(B)}/\cat{Fib}_B$ and inverting the stable equivalences in each category $\Osp^c(A \times_B C)$.

\begin{prop}\label{ex_b_is_coherently_deformable}
	Each functor in the definition of the shadowed $n$-Fuller structure  and base change objects for $\Osp^c_{(B)}/\cat{Fib}_B$ has a right-derived functor. For every isomorphism and coherence axiom in these structures, the relevant lists of functors are coherently right-deformable.
\end{prop}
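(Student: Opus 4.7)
The plan is to run the arguments of \cref{apply_parameterized_spectra} verbatim, replacing every appearance of $\odot$, $\boxtimes$, $U_A$, $T_{(A_i)}$, and $\bcr{A}{f}{B}$ by their fiberwise counterparts over $B$, and replacing the two key technical inputs by their fiberwise analogues: \cref{convenient:preserve_equivalences} becomes \cref{convenient:preserve_equivalences_fiberwise}, and \cref{unit_preserves_equivalences,base_change_preserves_equivalences} are subsumed into \cref{fib_source_of_confusion}. Concretely, I would reproduce each of \cref{odot_deformable,associator_collection,unit_preserves_equivalences,pentagon_again,ex_is_coherently_deformable,ex_is_coherently_deformable_2,ex_is_coherently_deformable_3,ex_is_coherently_deformable_4} in the fiberwise setting, with the same default collection of \fibrant objects (tuples of level $h$-fibrant spectra over $A \times_B C$), and the same recipe for expanding that collection to include units, twisting 1-cells, or pairs of base-change 1-cells whenever such objects show up as inputs to a composite. \cref{convenient_main_thm} applies in the fiberwise context because the external smash $\barsmash_B$ differs from $\barsmash$ only by a pullback along a Hurewicz fibration $A \times_B C \to A \times C$, which preserves level $h$-fibrancy and all stable equivalences.

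The first step is to check that the fiberwise product $\odot_B$, the shadow, and the fiberwise external product $\barsmash_B$ are right-deformable with respect to the level $h$-fibrant objects. This uses that all the maps $f$ appearing in the multi-spans defining $\odot_B$, $\sh{\cdot}$, and $\barsmash_B$ (namely, fiberwise projections $\pi\colon A \to B$ and their iterates) are Hurewicz fibrations, so that the pushforwards in the action \eqref{action_of_multi_span_fiberwise} preserve level $h$-fibrancy. Next, for each coherence diagram from \cref{sec:point_set} (as transcribed above for $\Osp^c/\cat{Fib}_B$), I enumerate the composites of functors that appear and verify that each composite preserves an appropriate enlarged collection of \fibrant objects. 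When a composite involves a unit $U_A$, a twisting 1-cell $T_{(A_i)}$, or a pair of base-change 1-cells, I use the appropriate case of \cref{fib_source_of_confusion} to expand the \fibrant subcategory in precisely the slot where that object appears. As in \cref{pedantic:expansion}, I use only one such expansion at a time, never mixing units and base-change objects in the same slot.

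Once every list of functors is coherently right-deformable, \cref{rd_descends,rd_descends_1.5,rd_descends_2,rd_descends_3} produce derived versions of all the structure maps ($\ti\alpha$, $\ti\ell$, $\ti r$, $\ti\theta$, $\ti m_\boxtimes$, $\ti i_\boxtimes$, $\ti\vartheta$, $\ti\tau$, $\ti m_{[]}$, $\ti i_{[]}$, $\ti\pbc$) and automatically yield the commutativity in the homotopy category of each diagram in \cref{fig:bicat_1,fig:bicat,fig:shadowed,fig:shadowed1,fig:shadowed_twist,fig:base_change_1,fig:base_change,fig:bc_compatibility}. Since all the intermediate pushforwards come from maps over $B$ that are Hurewicz fibrations (by construction of $\cat{Fib}_B$), preservation of level $h$-fibrancy is automatic and requires no extra fibrancy checks beyond those already carried out in \cref{fib_source_of_confusion}.

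The main obstacle is bookkeeping rather than new ideas: one must consistently choose, slot-by-slot in every composite appearing in a coherence diagram, whether a particular factor is forced to be level $h$-fibrant or is allowed to be the ``special'' unit/twisting/base-change object, and verify that the chosen expansion is preserved by every functor in the list. The only genuinely delicate point is the case of base-change composites and the compatibility diagram \cref{fig:bc_compatibility}, where both a twisting 1-cell $T_{(B_i)}$ and a product of base-change 1-cells $\boxtimes \bcr{E_i}{p_i}{B_i}$ occur simultaneously; here one uses that $\boxtimes \bcr{E_i}{p_i}{B_i}$ is itself isomorphic to a base-change 1-cell (via the derived $\ti\pbc$ on the point-set level), together with the fiberwise version of \cref{weird_fibration} that was already used to prove \cref{fib_source_of_confusion}, to conclude that the relevant slot-wise expansions are preserved by each instance of $\odot_B$ and $\barsmash_B$.
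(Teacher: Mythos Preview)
Your overall plan is right, and for $\odot_B$ and the shadow it goes through exactly as you say. But there is one genuine gap concerning $\boxtimes = \barsmash_B$. You write that ``$\barsmash_B$ differs from $\barsmash$ only by a pullback along a Hurewicz fibration $A \times_B C \to A \times C$, which preserves level $h$-fibrancy and all stable equivalences.'' That map is \emph{not} a fibration; it is the inclusion of the fiber product into the product (equivalently, $\barsmash_B$ is $\barsmash$ followed by pullback along the diagonal $B \to B^n$). Pullback along it does \emph{not} preserve all stable equivalences of freely $f$-cofibrant spectra---it only does so on spectra whose levels are Serre fibrations over $B$. In the non-fiberwise proof of \cref{ex_is_coherently_deformable_3,ex_is_coherently_deformable_4} you used freely that $\boxtimes = \barsmash$ preserves all equivalences (for instance, on tuples of units $U_{A_i}$ or base-change objects $\bcr{E_i}{p_i}{B_i}$); that step no longer comes for free here.

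The fix, which is what the paper does, is to check that every ``special'' \fibrant object you feed into $\barsmash_B$ has spectrum levels that are Hurewicz fibrant over $B$. This is automatic for the level $h$-fibrant spectra (since $X_n \to A \to B$ is a composite of fibrations), but for base-change objects $\bcr{A}{f}{C}_B = \Sigma^\infty_{+(C \times_B A)} A$ it requires a short argument: at level $0$ the map $A_{+(C\times_B A)} \to C \times_B A \to B$ is a fibration, and at level $n$ one uses that $S^n \barsmash (A_{+(C\times_B A)})$ is a pushout of spaces fibrant over $B$ along a cofibration, hence again fibrant over $B$ (via \cite[Prop 2.1.9 and Lem 2.1.10]{convenient}). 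Once you supply this check, the rest of your argument is correct and matches the paper's.
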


\begin{proof}
This proof is almost identical to that of \cref{ex_is_coherently_deformable,ex_is_coherently_deformable_2,ex_is_coherently_deformable_3,ex_is_coherently_deformable_4}. 
The role of \cref{unit_preserves_equivalences,base_change_preserves_equivalences} is played by \cref{fib_source_of_confusion}.

The only significant difference is that $\boxtimes$ is replaced by $\barsmash_B$, which is the external smash product followed by a pullback along a diagonal map $B \to B^n$.  This does not preserve all stable equivalences of freely $f$-cofibrant spectra over $A$, only those for which the composite maps $X_n \to A \to B$ are Serre fibrations. So we have to check that in each case it is applied, each of our \fibrant objects has this property.

For the level $h$-fibrant spectra $X$ over $A \to B$, the map $X_n \to A$ is a Hurewicz fibration.  Since $A \to B$ is a Hurewicz fibration by assumption, the composite $X_n \to B$ is a Hurewicz fibration. The only other cases are when $X$ is either a unit or a base change object. Since units are a special case of base change objects, we only have to consider base change objects.

If $X = \bcr{A}{f}{C} = \Sigma^\infty_{+(C \times_B A)} A$, then at spectrum level 0, the projection to $B$ is
\[ A_{+(C \times_B A)} \to C \times_B A \to B, \]
which is a Hurewicz fibration because $A \to B$ and $C \to B$ are Hurewicz fibrations by assumption. At spectrum level $n$, we get the external smash product
\[ S^n \barsmash \left( A_{+(C \times_B A)} \right). \]
By \cite[(2.3.4)]{convenient}, this is the pushout of $A \times S^n$ and $C \times_B A$ along $A$. Since all three of these spaces are $h$-fibrant over $B$ and the map $A \to (A \times S^n)$ has the homotopy extension property, this pushout is also $h$-fibrant over $B$ by \cite[Prop 2.1.9 and Lem 2.1.10]{convenient}.

In summary, $\barsmash_B$ preserves stable equivalences on all of our \fibrant objects.
\end{proof}

\begin{cor}
	$\Ex_B$ has a shadowed $n$-Fuller structure and a compatible system of base change objects indexed by $\cat{Fib}_B$.
\end{cor}

\subsection{Structures on the bicategory $G\Ex_B$}\label{sec:gexb}

Finally, we fix both a finite group $G$ and a base space $B$ with a $G$-action, and consider the bicategory $G\Ex_B$, defined the same was as $\Ex_B$ but with $G$-spaces and orthogonal $G$-spectra. So the base category is $G\cat{Fib}_B$, the category of equivariant Hurewicz fibrations $E \to B$. (An equivariant Hurewicz fibration satisfies the lifting condition for a Hurewicz fibration in the category of all $G$-spaces.) We have a symmetric monoidal bifibration $G\Osp_{(B)}$ of all orthogonal $G$-spectra $X$ over $G$-spaces $E$ with equivariant Hurewicz fibrations $E \to B$, whose monoidal product is the external smash product rel $B$, denoted $\barsmash_B$. From this we construct a point-set bicategory $G\Osp^c_{(B)}/G\cat{Fib}_B$ and a homotopy bicategory $G\Ex_B$ just as before.

In particular, a 0-cell in $G\Ex_B$ is a $G$-space $A$ with an equviariant Hurewicz fibration $A \to B$. A 1-cell from $(A \to B)$ to $(C \to B)$ is a freely $f$-cofibrant orthogonal $G$-spectrum over the fiber product $A \times_B C$, and a 2-cell is a map in the homotopy category of $G\Osp^c(A \times_B C)$, again using the ``genuine'' version of stable equivalence from \cite[7.4]{convenient}. The proof of the following is the same as in the previous subsection:

\begin{cor}
	$G\Ex_B$  has a shadowed $n$-Fuller structure and a compatible system of base change objects indexed by $G\cat{Fib}_B$.
\end{cor}

\section{Geometric fixed points and reduction of group action}\label{sec:functors}

Let $G$ be any finite group. For each subgroup $H \leq G$ and $G$-space $A$, let $A^H$ denote the subspace of $H$-fixed points. The $G$-action on $A$ descends to an action of the Weyl group $WH = NH/H$ on the fixed point space $A^H$.

In this section we show the geometric fixed points functor (defined below) and the forgetful functor induce strong shadow functors
\[ \Phi^H\colon G\Ex \to WH\Ex, \qquad \iota_H^*\colon G\Ex \to H\Ex \]
that commute up to vertical natural isomorphism (also known as an invertible icon, see \cref{df_iicon}) with the base change objects. We then do the same for the bicategories of spectra over spaces over $B$, where $B$ is a fixed space with trivial $G$-action:
\[ \Phi^H\colon G\Ex_B \to WH\Ex_B, \qquad \iota_H^*\colon G\Ex_B \to H\Ex_B. \]
This is the last result needed for \cite[Thm 8.3]{mp1}.

This section will mimic \cref{sec:rigid,sec:point_set,sec:deform,apply_parametrized_spectra}. We first prove a rigidity result (\cref{sec:rigid_geo_fp}), which requires us to get a little hands-on because it goes beyond the foundations for $G$-spectra covered in \cite{convenient}. Then we produce the point-set functors and natural transformations (\cref{sec:ssf_geo_fp,sec:nt_geo_fp}), and finally show this structure descends to the homotopy category (\cref{sec:htpy_geo_fp}).

\subsection{Rigidity}\label{sec:rigid_geo_fp}
We start by proving a rigidity result for geometric fixed points $\Phi^H$ that parallels \cref{rigidity}.

\begin{defn}\cite[V\S4]{mandell_may}, \cite[\S 9]{mp1}\label{df:geo_fp}
For a finite group $G$ and a subgroup $H \leq G$, the {\bf geometric fixed points functor} $\Phi^H\colon G\Osp \to WH\Osp$ sends each parametrized $G$-spectrum $X$ to the coequalizer
\[ \bigvee_{V,W} F_{W^{H}} S^0 \barsmash \mathscr J_G^{H}(V,W) \barsmash X(V)^{H} \rightrightarrows \bigvee_V F_{V^{H}} S^0 \barsmash X(V)^{H}  \ra \Phi^{H} X. \]
Here $V,W \subseteq \mc U$ run over finite-dimensional $G$-representations in some fixed complete $G$-universe $\mc U$, $\mathscr J_G$ is the indexing category for orthogonal $G$-spectra, $\mathscr J_G^H$ denotes its $H$-fixed points, and $F_{V^H}$ denotes the free spectrum at level $V^H$.
\end{defn}
Note that if $X$ is a $G$-spectrum over a space $A$, the geometric fixed points $\Phi^H X$ is a $WH$-spectrum over $A^H$.  See \cite[Prop 9.3]{mp1} for further discussion of geometric fixed points in the context of parametrized spectra.
 
\begin{lem} \label{geomfp_preserves}\cite[Prop 9.3]{mp1} 
The geometric fixed points functor 
commutes up to natural isomorphism with smash product, pullback, and pushforward on all freely $f$-cofibrant spectra.  
In particular, we have isomorphisms 
\begin{align}
\Phi^H(X\barsmash Y)&\cong \Phi^HX\barsmash\Phi^HY\label{phi_barsmash}\\
\Phi^H(f^*Y)&\cong (f^H)^*\Phi^H(Y)\label{phi_pullback}\\
\Phi^H(f_!Y)&\cong (f^H)_!\Phi^H(Y).\label{phi_pushforward}
\end{align}
\end{lem}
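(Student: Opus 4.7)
The strategy is to reduce each of the three isomorphisms to a verification on free $G$-spectra $F_V X$, then extend by cellular induction to all freely $f$-cofibrant spectra. The first step is to establish the fundamental computation
\[ \Phi^H F_V X \cong F_{V^H} X^H \]
by direct analysis of the coequalizer defining $\Phi^H$: the inclusion $V^H \hookrightarrow V$ represents an $H$-fixed morphism in $\mathscr{J}_G^H$ that, after passing to the coequalizer, identifies all of the wedge summands with the single summand $F_{V^H} S^0 \barsmash X^H$. Here $X$ is a based $G$-space over $A$, and $X^H$ is the space of $H$-fixed points over $A^H$.

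For the smash product isomorphism \eqref{phi_barsmash}, I would combine this computation with the standard formula $F_V X \barsmash F_W Y \cong F_{V \oplus W}(X \barsmash Y)$, the identity $(V \oplus W)^H = V^H \oplus W^H$, and the space-level fact $(X \barsmash Y)^H \cong X^H \barsmash Y^H$ for the external smash, giving the result on free spectra. The pullback and pushforward identities \eqref{phi_pullback} and \eqref{phi_pushforward} reduce in the same manner to the space-level facts $(f^* Z)^H \cong (f^H)^* Z^H$ and $(f_! Z)^H \cong (f^H)_! Z^H$ for any equivariant $f\colon A \to B$, combined with the levelwise commutation of these operations with the free functor $F_V$. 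Extension to all freely $f$-cofibrant spectra then uses that $\Phi^H$, $\barsmash$, $f^*$, and $f_!$ all preserve the cellular colimits defining freely $f$-cofibrant spectra, invoking the preservation results in \cite[Lem 4.3.1, Thm 4.4.6]{convenient}.

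The principal obstacle is ensuring that the coequalizer defining $\Phi^H$ is preserved by each of the four functors involved. Smash product and pushforward are cocontinuous (in each variable) and preserve the coequalizer automatically. The pullback $f^*$, however, is a right adjoint in the bifibration structure on spectra and does not preserve arbitrary colimits, so its commutation with the $\Phi^H$ coequalizer requires a separate argument. This is carried out by noting that the coequalizer is computed levelwise on parametrized spaces, and at the space level $f^*$ admits both a left adjoint $f_!$ and a right adjoint, so it preserves every colimit. Once this point-set commutation is in hand, naturality and compatibility of the three isomorphisms in all variables follows from tracking each one through its explicit definition.
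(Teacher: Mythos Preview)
The paper does not prove this lemma; it is imported from \cite[Prop 9.3]{mp1}, with only the accompanying remark that the (CG) point-set convention is required. Your outline---compute $\Phi^H F_V X \cong F_{V^H} X^H$ directly from the coequalizer, reduce each of the three isomorphisms to a space-level identity on free spectra, then extend cellularly---is the standard argument and is almost certainly what the cited reference carries out.

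One step deserves more care than you give it. You assert the space-level identity $(f_! Z)^H \cong (f^H)_! Z^H$ without comment, but $f_!$ on retractive spaces is the pushout $Z \cup_A B$ along the section, and $H$-fixed points do not commute with arbitrary colimits. This commutation, and the analogous commutation of levelwise $(-)^H$ with the pushouts and sequential colimits that build a free cell complex, is precisely where the (CG) convention in the paper's remark enters: in (CG) these colimits are formed on underlying sets and fixed points pass through pushouts along injections, whereas in (CGWH) the weak-Hausdorffification step can interfere. You correctly flag and resolve the dual issue of $f^*$ versus the defining coequalizer (your observation that $f^*$ on retractive spaces has both adjoints and is applied levelwise is exactly right). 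But the $f_!$ side, and the cellular-extension step, is the genuinely delicate half of the argument, and your proposal passes over it. The citations you give to \cite[Lem 4.3.1, Thm 4.4.6]{convenient} concern preservation of the \emph{class} of freely $f$-cofibrant spectra, not preservation of the colimits used to build them, so they do not directly supply what you need there.
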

\begin{rmk}
	\cref{geomfp_preserves} requires that we use the (CG) convention from \cite[p. 6]{convenient}. In other words, the levels of our spectra $X_n$ are only assumed to be compactly generated, not compactly generated weak Hausdorff.\footnote{It appears to also hold in the (CGWH) convention (for freely $f$-cofibrant spectra), but we do not provide a proof here.}
\end{rmk}

For each multi-span of $G$-spaces and equivariant maps
\[ \xymatrix @R=5pt { 
		&B \ar[dl]_-f \ar[dr]^(.5){(g_1,\ldots,g_n)}
		\\
		C && A_1 \times \ldots \times A_n,
	}\]
we can take the action of the multi-span followed by the geometric fixed points functor:
\begin{equation}\label{phi_h_action}
\xymatrix @R=5pt {
	\prod_i G\Osp^c(A_i) \ar[r] & WH\Osp^c(C) \\
	(X_1,\ldots,X_n) \ar@{|->}[r] & \Phi^H f_!(g_1,\ldots,g_n)^*(X_1 \barsmash \ldots \barsmash X_n) \\
	& \cong f^H_!(g^H_1,\ldots,g^H_n)^*(\Phi^H X_1 \barsmash \ldots \barsmash \Phi^H X_n).
}
\end{equation}

\begin{thm}\label{phi_h_rigidity}
	When $H = G$, the functor $\Phi^G \circ f_! \circ (g_1,\ldots,g_n)^* \circ \barsmash$ from \eqref{phi_h_action} admits no nonidentity automorphisms. For general $H$, the identity is the only automorphism that is natural when the morphisms are extended to the maps that are only $H$-equivariant, i.e. the morphisms in $\prod_i H\Osp^c(A_i)$.
\end{thm}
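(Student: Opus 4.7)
The plan is to mimic the proof of \cref{rigidity}, reducing to a non-equivariant rigidity statement about the $H$-fixed multi-span via \cref{geomfp_preserves}. First I would use the isomorphisms \eqref{phi_barsmash}, \eqref{phi_pullback}, \eqref{phi_pushforward} to identify the functor \eqref{phi_h_action} with the composite
\[
\prod_i G\Osp^c(A_i) \xrightarrow{\ \prod_i \Phi^H\ } \prod_i WH\Osp^c(A_i^H) \xrightarrow{\ F\ } WH\Osp^c(C^H),
\]
where $F$ is the action of the $WH$-equivariant multi-span obtained from $(f,g_1,\ldots,g_n)$ by taking $H$-fixed points throughout. Injectivity of $(f,g_1,\ldots,g_n)$ -- the natural hypothesis by analogy with \cref{rigidity} -- is preserved by $(-)^H$, so the $H$-fixed multi-span is again rigid. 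The $WH$-equivariant version of \cref{rigidity}, whose proof is formally the same since rigidity concerns morphisms in the base of an SMBF, then forces $F$ to have only the identity as a natural automorphism on all of $\prod_i WH\Osp^c(A_i^H)$.

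The crux of the proof is to transfer this rigidity across $\prod_i \Phi^H$. A natural automorphism $\eta$ of \eqref{phi_h_action} assigns to each tuple $(X_i)$ an automorphism of $F(\Phi^H X_1,\ldots,\Phi^H X_n)$, natural in $G$-equivariant morphisms of $X_i$ in the $H=G$ case and in $H$-equivariant morphisms in the general case. To reduce to rigidity of $F$ it suffices to show that every morphism between generators in $WH\Osp^c(A_i^H)$ of the form $\Phi^H X \to \Phi^H X'$ arises as $\Phi^H \psi$ for some equivariant $\psi\colon X \to X'$ in the input category: once such lifts are available, naturality of $\eta$ along $\psi$ becomes naturality along $\Phi^H \psi$, so $\eta$ is a natural automorphism of $F$ on the relevant generating class, and rigidity of $F$ forces $\eta = \id$ there and hence everywhere.

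For the $H=G$ case these lifts are supplied by objects with trivial $G$-action: any free spectrum $F_W(B_+)$ over $A_i^G \subseteq A_i$ is the image under $\Phi^G$ of the same free-spectrum construction applied to $B$ viewed as a $G$-space over $A_i$ with trivial action, and any non-equivariant map between two such trivial-action spectra is automatically $G$-equivariant in this model. For general $H$ the same trivial-action lifts are only $H$-equivariant rather than $G$-equivariant, which is precisely why the hypothesis has to be enlarged to naturality along $H$-equivariant maps. The main obstacle, and the step requiring the most care, is verifying that these explicit lifts supply a rich enough family of generators: one must match cofibrant generators of $WH\Osp^c(A_i^H)$ with $\Phi^H$-images of objects in $G\Osp^c(A_i)$, and check that every morphism between them needed to invoke rigidity of $F$ is realized by a morphism in the enlarged naturality class.
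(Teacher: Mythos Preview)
Your factorization through $\Phi^H$ and the plan to invoke rigidity of the $H$-fixed multi-span $F$ is a natural first move, and the trivial-action lifts you describe do handle the base case of tuples of free spectra on single $G$-fixed points. But the step you correctly flag as the main obstacle---lifting enough morphisms through $\Phi^H$ to apply rigidity of $F$ as a black box---does not close by the method you sketch, and the paper takes a genuinely different route at exactly this point.

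The difficulty is that rigidity of $F$ (the $WH$-equivariant analogue of \cref{rigidity}) needs naturality along arbitrary nonequivariant maps in $\prod_i WH\Osp^c(A_i^H)$, in particular along maps $F_W(y)_+ \to Y$ from free spectra into \emph{arbitrary} targets; these are precisely the maps that extend the rigidity argument from generators to everything. A map $F_W(y)_+ \to \Phi^G X_i$ corresponds to a point of $(\Phi^G X_i)(W)$, and there is no general mechanism for realizing such a point as $\Phi^G$ of a $G$-equivariant map into $X_i$. So you cannot simply hand the extension step off to $F$; unpacking what you would need amounts to redoing the proof anyway.

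The paper argues directly instead. After establishing $\eta=\id$ on tuples $(F_{V_1}(*)_{+A_1},\ldots,F_{V_n}(*)_{+A_n})$ by adapting the arguments cited from \cite{convenient} and \cite{malkiewich_thh_dx}, it extends to arbitrary $(X_1,\ldots,X_n)$ via a \emph{surjectivity} argument: for each $G$-fixed point of $X_i(V)$ the free--forget adjunction supplies a $G$-map $F_V(*)_{+A_i}\to X_i$, and these are jointly surjective onto $X_i(V)^G$. The key observation is that $\Phi^G$ sends a map that is surjective on $G$-fixed points at each level to a levelwise surjection of spectra, and $\barsmash$, $g^*$, $f_!$ all preserve levelwise surjections. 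Hence the functor sends the wedge of these maps to a levelwise surjection, and any automorphism that commutes with a surjection and is the identity on its source must be the identity on its target. This replaces your lifting problem with a concrete closure property of $\Phi^G$ and the other constituent functors under surjections.
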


\begin{proof}
	Without loss of generality $H = G$. 
Let $\eta$ be any automorphism of the functor 
\[f^G_!(g^G_1,\ldots,g^G_n)^*(\Phi^G X_1 \barsmash \ldots \barsmash \Phi^G X_n).\]
 By the proof of \cite[Thm 2.5.5]{convenient}, $\eta$ must be the identity on any $n$-tuple of suspension spectra $(\Sigma^\infty_{+A_1} (*),\ldots,\Sigma^\infty_{+A_n} (*))$, on retractive $G$-spaces of the form $(*)_{+A_i} = (*) \amalg A_i$, where the point $(*)$ has trivial $G$-action and therefore maps to $A_i^G \subseteq A_i$. 
	
	The proof of \cite[Prop 3.17]{malkiewich_thh_dx} carries over to the parametrized setting, as in \cite[Thm 4.5.2]{convenient}, and shows that $\eta$ must be the identity on any $n$-tuple of spectra 
\[(F_{V_1} (*)_{+A_1}, \ldots, F_{V_n} (*)_{+A_n})\]
 formed by taking free spectra on nontrivial representations on the same one-point retractive spaces $(*)_{+A_i}$.
	
	To further extend the class of spectra on which $\eta$ is the identity, note that if $f\colon X \to Y$ is a levelwise surjection of spectra, and $\psi$ is an automorphism commuting with this map
	\[ \xymatrix @R=1.7em{
		X \ar[r]^-\psi \ar[d]_-f & X \ar[d]^-f \\
		Y \ar[r]^-\psi & Y,
	} \]
if $\psi$ is the identity on $X$, then $\psi$ must also be the identity on $Y$. Now observe that $\Phi^G$ sends maps of spectra $X \to Y$ for which $X(V)^G \to Y(V)^G$ is surjective, to levelwise surjections of spectra. Furthermore, $\barsmash$, $g^*$, and $f_!$ all preserve levelwise surjections of spectra. So if we choose a map of tuples 
\[(X_1,\ldots,X_n) \to (Y_1,\ldots,Y_n)\]
 such that each map $X_i(V)^G \to Y_i(V)^G$ is a surjection, and if $\eta$ is the identity on $(X_1,\ldots,X_n)$, then $\eta$ must also be the identity on $(Y_1,\ldots,Y_n)$.
	
	If more generally $\eta$ is the identity on a collection of such tuples $(X_1^\alpha,\ldots,X_n^\alpha)$, and we choose maps $X_i^\alpha \to Y_i$ such that the maps $X_i^\alpha(V)^G \to Y_i(V)^G$ are jointly surjective, then taking their wedge sum gives a levelwise surjective map $\bigvee_\alpha X_i^\alpha \to Y_i$, 
which then $\Phi^G$, $\barsmash$, $g^*$, and $f_!$ take to a levelwise surjective map of spectra. So if $\eta$ is the identity on all of the tuples $(X_1^\alpha,\ldots,X_n^\alpha)$, then $\eta$ must also be the identity on $(Y_1,\ldots,Y_n)$.
	
	We now apply this principle to finish the proof. For any $n$-tuple of parametrized spectra $(X_1,\ldots,X_n)$, the free-forget adjunction gives a collection of maps of spectra 
\[F_V (*)_{+A_i} \to X_i,\]
 one for each representation $V$ and $G$-fixed point in $X_i(V)$. These maps are jointly surjective onto $X_i(V)^G$ by construction. Therefore, by the above discussion, since $\eta$ is the identity on every tuple of free spectra on one-point spaces, it must also be the identity on $(X_1,\ldots,X_n)$.	
\end{proof}

\subsection{Strong shadow functors}\label{sec:ssf_geo_fp}
Suppose $\mathcal{B}$ is a bicategory with shadow (\cref{df:shadow}) taking values in the 1-category $\cat{T}$. We call $\cat{T}$ the {\bf shadow category} for $\mathcal{B}$.

\begin{defn}\label{df:ssfun}
Following \cite{p:thesis}, suppose $\mathcal{B}$ and $\mathcal{B}'$ are bicategories with shadow categories $\cat{T}$ and $\cat{T}'$ respectively. A {\bf strong shadow functor} is a pseudofunctor $F\colon \mathcal{B}\to \mathcal{B}'$, a functor $F_{\mathrm{tr}}\colon \cat{T}\to \cat{T}$ and a natural isomorphism  $\sh{F(-)}\to F_{\mathrm{tr}}(\sh{-})$ so that 
the following diagram commutes.
\[\xymatrix{\sh{FX\odot FY}\ar[r]^\theta\ar[d]&\sh{FY\odot FX}\ar[r]&\sh{F(Y\odot X)}\ar[d]
\\
\sh{F(X\odot Y)}\ar[r]&F_{\mathrm{tr}}\sh{X\odot Y}\ar[r]^{F_{\mathrm{tr}}(\theta)}&F_{\mathrm{tr}}\sh{Y\odot X}}\]
\end{defn}

We now use the geometric fixed points functor $\Phi^H$ from \cref{df:geo_fp} and its rigidity theorem (\cref{phi_h_rigidity}) to define a strong shadow functor on the point-set bicategory of parametrized $G$-spectra.

\begin{prop}\label{prop:geo_ssf}
The geometric fixed points functor $\Phi^H\colon G\Osp \to WH\Osp$ extends to a strong shadow functor 
\[ \Phi^H\colon G\Osp^c/G\Top \to WH\Osp^c/WH\Top \] defined
on 0-cells, hom categories, and shadow categories as follows:  
\begin{itemize}
	\item On objects, the image of a $G$-space $A$ is the $WH$-space $A^H$.
	\item 
	On morphism categories, we take the functors
	\[ \Phi^H\colon G\Osp^c(A \times B) \to WH\Osp^c(A^H \times B^H). \]
	To be precise, $\Phi^H$ produces spectra over $(A \times B)^H$, and we pull back along the canonical identification to make these into spectra over $A^H \times B^H$.
	\item On shadow categories, we take the functor
	\[ \Phi^H\colon G\Osp^c(*) \to WH\Osp^c(*). \]
\end{itemize}
\end{prop}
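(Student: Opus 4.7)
The plan is to reuse the template of \cref{sec:point_set}: define each comparison 2-cell by exhibiting both its source and target as the action of a single rigid multi-span, then use \cref{phi_h_rigidity} to get both existence of the comparison and all the coherence identities. First I would record that the three assignments in the statement are well-defined: the equality $(A \times B)^H = A^H \times B^H$ of $WH$-spaces (and analogous equalities for $A$, $\star$) makes the target categories correct, and the construction of $\Phi^H$ as a coequalizer of wedges of free spectra on $H$-fixed retractive spaces preserves the free $f$-cofibrancy condition (as in \cite{convenient}), so $\Phi^H$ restricts to $G\Osp^c \to WH\Osp^c$.

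Next I would build the pseudofunctor composition and unit isomorphisms
\[
	m_{\Phi^H}\colon \Phi^H(X) \odot \Phi^H(Y) \cong \Phi^H(X \odot Y), \qquad i_{\Phi^H}\colon U_{A^H} \cong \Phi^H(U_A),
\]
together with the shadow comparison isomorphism $\phi\colon \sh{\Phi^H X} \cong \Phi^H\sh{X}$. In each case, the source and target are obtained by applying $\Phi^H$ before or after an action of a multi-span; \cref{geomfp_preserves} lets us commute $\Phi^H$ past $\barsmash$, $f^*$, $f_!$ up to isomorphism, and the equalities $(A \times B)^H = A^H \times B^H$, $\pi^H = \pi$, $\Delta^H = \Delta$ show that both sides are isomorphic to the action on $(\Phi^H X, \Phi^H Y)$ (or on $\Phi^H X$ in the shadow case) of the fixed-point multi-span defining $\odot$, $U$, or $\sh{-}$ in $WH\Osp^c/WH\Top$. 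By \cref{phi_h_rigidity}, the relevant functor has exactly one automorphism that is natural with respect to $H$-equivariant maps, namely the identity, so the comparison isomorphism exists and is uniquely determined.

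All coherence diagrams needed to make $\Phi^H$ a strong shadow functor --- the pseudofunctor associator and unit axioms (\cref{fig:pseudo_odot,fig:pseudo_unit}) and the shadow compatibility axioms (analogs of \cref{fig:shadow_associator,fig:shadow_unitor}) --- follow by the same \cref{phi_h_rigidity} argument: every functor appearing in these diagrams is isomorphic to the action of one appropriate rigid multi-span (composed with $\Phi^H$), so each circuit around a coherence diagram is a natural automorphism of that action and therefore the identity. The same applies to showing $\Phi^H$ is functorial on 2-cells and preserves identities and compositions within each hom category, which is immediate from functoriality of $\Phi^H$ on $G\Osp$.

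The only real subtlety is that \cref{phi_h_rigidity} gives rigidity only for automorphisms that are natural with respect to $H$-equivariant (rather than $G$-equivariant) maps when $H \lneq G$. So when verifying each coherence I must check that every map participating in the diagram --- in particular the isomorphisms from \cref{geomfp_preserves} and the structure maps of the target bicategory $WH\Osp^c/WH\Top$ --- is natural with respect to $H$-equivariant, hence also non-equivariant, morphisms between inputs. This is automatic for everything built from $\barsmash$, $f^*$, $f_!$ and from $\Phi^H$ itself on underlying non-equivariant data, so rigidity applies and the proof goes through uniformly.
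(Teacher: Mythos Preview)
Your proposal is correct and follows essentially the same approach as the paper: construct $m_{\Phi^H}$, $i_{\Phi^H}$, and the shadow comparison by commuting $\Phi^H$ past $\barsmash$, $f^*$, $f_!$ via \cref{geomfp_preserves}, then invoke \cref{phi_h_rigidity} (with the $H$-equivariant naturality caveat you correctly flag) both to pin down these isomorphisms uniquely and to force all coherence diagrams to commute. One small correction: the coherence axioms you need are those for a \emph{strong shadow functor}, recorded in \cref{fig:strong_shadow} (namely \cref{fig:shadow_odot,fig:shadow_unit,fig:shadow_shadow}), not the pseudofunctor axioms \cref{fig:pseudo_odot,fig:pseudo_unit} nor the bicategory-level shadow axioms \cref{fig:shadow_associator,fig:shadow_unitor}; the paper verifies these by identifying each vertex with the $\Phi^H$-augmented action of the associator span, the identity span, and the rotator span respectively.
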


\begin{proof}
The natural isomorphisms
	\begin{align*} m_{\Phi^H}\colon \Phi^H(X) \odot \Phi^H(Y) &\cong \Phi^H(X \odot Y) \\	
	 i_{\Phi^H}\colon U_{A^H} &\cong \Phi^H(U_A) \\
	 s_{\Phi^H}\colon \sh{\Phi^H(X)}& \cong \Phi^H\sh{X} 
	\end{align*}
	on freely $f$-cofibrant spectra arise by commuting $\Phi^H$ past the smash product, pullback, and pushforward using the isomorphisms in \eqref{phi_barsmash}, \eqref{phi_pullback}, and  \eqref{phi_pushforward}.
In each case this is the unique isomorphism that is natural for all $H$-equivariant maps, by  \cref{phi_h_rigidity}. 

To show these define a strong shadow functor, we must check the coherences in \cref{fig:strong_shadow}. The first three verify that $\Phi^H$ is a pseudofunctor, and the last one is from \cref{sec:ssf_geo_fp}.
\begin{figure}[h]
	\hspace{2em}
	\begin{subfigure}[t]{18em}
		\resizebox{\textwidth}{!}{
			\xymatrix{
				U_{A^H} \odot \Phi^H X \ar[d]_-{i_{\Phi^H} \odot \id} \ar[r]^-\ell & \Phi^H X \\
				(\Phi^H U_A) \odot \Phi^H X \ar[r]^-{m_{\Phi^H}} & \Phi^H (U_A \odot X) \ar[u]_-{\Phi^H(\ell)}
			}
		}
		\caption{The shadow functor left unitor axiom}\label{fig:shadow_unit}
	\end{subfigure}
	\hspace{5em}
	\begin{subfigure}[t]{18em}
		\resizebox{\textwidth}{!}{
			\xymatrix{
				\Phi^H X \odot U_{B^H} \ar[d]_-{\id \odot i_{\Phi^H}} \ar[r]^r & \Phi^H X \\
				\Phi^H X \odot (\Phi^H U_B) \ar[r]^-{m_{\Phi^H}} & \Phi^H (X \odot U_B) \ar[u]_-{\Phi^H(r)}
			}
		}
		\caption{The shadow functor right unitor axiom}\label{fig:shadow_unit_2}
	\end{subfigure}
	\begin{subfigure}[t]{24em}
		\resizebox{\textwidth}{!}{
			\xymatrix{
				(\Phi^H X \odot \Phi^H Y) \odot \Phi^H Z \ar[r]^-\alpha \ar[d]_-{m_{\Phi^H}\odot \id} &
				\Phi^H X \odot (\Phi^H Y \odot \Phi^H Z) \ar[d]^-{\id\odot m_{\Phi^H}} \\
				\Phi^H (X \odot Y) \odot \Phi^H Z \ar[d]_-{m_{\Phi^H}} &
				\Phi^H X \odot \Phi^H (Y \odot Z) \ar[d]^-{m_{\Phi^H}} \\
				\Phi^H ((X \odot Y) \odot Z) \ar[r]^-\alpha &
				\Phi^H (X \odot (Y \odot Z))
			}
		}
		\caption{The shadow functor associator axiom }\label{fig:shadow_odot}
	\end{subfigure}
	\hspace{2em}
	\begin{subfigure}[t]{17em}
		\resizebox{\textwidth}{!}{
			\xymatrix{
				\sh{ \Phi^H X \odot \Phi^H Y } \ar[r]^-\theta \ar[d]_-{\sh{m_{\Phi^H}}} &
				\sh{ \Phi^H Y \odot \Phi^H X } \ar[d]^-{\sh{m_{\Phi^H}}} \\
				\sh{ \Phi^H (X \odot Y) } \ar[d]_-{s_{\Phi^H}} &
				\sh{ \Phi^H (Y \odot X) } \ar[d]^-{s_{\Phi^H}} \\
				\Phi^H \sh{ X \odot Y } \ar[r]^-\theta &
				\Phi^H \sh{ Y \odot X }
			}
		}
		\caption{The shadow functor rotator axiom }\label{fig:shadow_shadow}
	\end{subfigure}

	\caption{Commutative diagrams for a strong shadow functor.}\label{fig:strong_shadow}
\end{figure}

\begin{itemize}
\item 
	In \cref{fig:shadow_odot}, each of the objects is isomorphic to the action of the multi-span
	\[ \xymatrix @R=5pt { 
		&A \times B \times C \times D\ar[dl]_-{1\pi \pi 1}\ar[dr]^-{1\Delta\Delta 1}
		\\
		A \times D && (A \times B) \times (B \times C) \times (C \times D)
	}\]
	(the one that defines the associator $\alpha$), acting with geometric fixed points as in \eqref{phi_h_action}. Here we are using that the geometric fixed points functor commutes up to isomorphism with smash product, pullback and pushforward on freely $f$-cofibrant spectra (\cref{geomfp_preserves}). So all of the functors in the diagram are of the form given in \eqref{phi_h_action}. Furthermore, all of the isomorphisms between them are chosen to be natural with respect to $H$-equivariant maps of the inputs. Therefore the diagram commutes by \cref{phi_h_rigidity}. 

	\item The coherences in \cref{fig:shadow_unit} and \cref{fig:shadow_unit_2} follow similarly using the 
	identity multi-span
	\[ \xymatrix @R=5pt { 
		&A \times B\ar[dl]_-{=}\ar[dr]^-{=}
		\\
		A \times B && A \times B.
	}\]
	\item The coherence in \cref{fig:shadow_shadow} 
		follows similarly using the multi-span that defined the rotator $\theta$,
	\[ \xymatrix @R=5pt { 
		&A \times B\ar[dl]_-{\pi \pi}\ar[dr]^-{\Delta\Delta}
		\\
		{\star} && (A \times B) \times (B \times A).
	}\]

\end{itemize}
\end{proof}

\subsection{Natural transformations}\label{sec:nt_geo_fp}  We now turn to compatibility between 
geometric fixed points and base change objects. We need to compare the pseudofunctors
\begin{equation}\label{geometric_fp_icon}
 \Phi^H \circ [] \textup{ and } [] \circ (-)^H\colon \ G\Top \to WH\Osp^c/WH\Top. 
\end{equation}
Here $G\Top$ is the 1-category of $G$-spaces and equivariant maps, regarded as a 2-category and therefore a bicategory in a trivial way, with only identity 2-cells.

The two pseudofunctors in \eqref{geometric_fp_icon} agree on 0-cells, so we can relate them by a vertical natural isomorphism (\cref{df_iicon}), equivalently an invertible icon in the sense of \cite{lack2010icons}. These are less complicated than pseudonatural transformations, and are exactly the compatibility that is required for \cite[Thm 8.3]{mp1}.

\begin{prop}\label{prop:geo_ssf_2}
There is a  vertical natural isomorphism (invertible icon) between the pseudofunctors $\Phi^H \circ []$ and $[] \circ (-)^H$ in \eqref{geometric_fp_icon}.
\end{prop}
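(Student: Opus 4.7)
The plan is to construct the icon $\alpha$ by specifying a 2-cell $\alpha_f$ for each morphism $f\colon A\to B$ in $G\Top$ and then verifying the two remaining icon axioms (the equivariant source has no nontrivial 2-cells, so \cref{icon_nat} is vacuous) using the rigidity statement \cref{phi_h_rigidity}. This follows the same rhythm as all of the other proofs in the paper: identify both sides of each desired isomorphism with the action of a rigid multi-span, apply geometric fixed points, and invoke rigidity.

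First I would define the 2-cell
\[ \alpha_f\colon \Phi^H\bcr{A}{f}{B} \xrightarrow{\ \cong\ } \bcr{A^H}{f^H}{B^H}. \]
By \cref{geomfp_preserves}, $\Phi^H$ commutes up to canonical isomorphism with the external smash product, pullback, and pushforward on freely $f$-cofibrant spectra. Hence $\Phi^H$ applied to the action of the base-change multi-span \eqref{span:base_change} is isomorphic to the action of the multi-span
\[ \xymatrix @R=5pt {
	&A^H \ar[dl]_-{(f^H,1)}\ar[dr]^-{\pi}
	\\
	B^H\times A^H && \prod_\emptyset
}\]
(in $WH\Top$) applied to $\Phi^H$ of the $G$-equivariant unit; the target $\bcr{A^H}{f^H}{B^H}$ is the action of the same multi-span applied directly to the $WH$-equivariant unit. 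Since $\Phi^H$ sends the $G$-equivariant sphere spectrum to the $WH$-equivariant sphere spectrum, both functors have the form studied in \cref{phi_h_rigidity}, and $\alpha_f$ is the unique such isomorphism natural in $H$-equivariant maps.

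Next I would verify the unit axiom (\cref{fig:icon_unit}). When $f=\id_A$, the point-set identification $i_{[]}\colon U_A = \bcr{A}{\id}{A}$ from \cref{base_change_functor} turns all four corners into expressions isomorphic to the action of the rigid identity span on $\Phi^H$ of the unit, so rigidity forces the square to commute. For the composition axiom (\cref{fig:icon_odot}), for any composable pair $A\xrightarrow{f}B\xrightarrow{g}C$ in $G\Top$, each of the four vertices is isomorphic (after commuting $\Phi^H$ past the external product, pullbacks, and pushforwards) to the action of the rigid multi-span
\[ \xymatrix @R=5pt {
	&A^H \ar[dl]_-{(g^Hf^H,1)}\ar[dr]^-{\pi}
	\\
	C^H\times A^H && \prod_\emptyset
}\]
applied to the unit, and the commutativity again follows from \cref{phi_h_rigidity}.

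The main (only) subtlety is bookkeeping: one must make sure that the isomorphisms coming from \cref{geomfp_preserves} and the various base-change and composition isomorphisms $m_{[]}$, $i_{[]}$ are all natural with respect to $H$-equivariant maps, so that \cref{phi_h_rigidity} applies to the full diagrams. Each building block has this property by construction, so there is no real obstacle; the rigidity theorem does all the work of producing the unique isomorphism and of forcing coherence.
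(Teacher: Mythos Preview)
Your proposal is correct and follows essentially the same approach as the paper: define the component $\eta$ (your $\alpha_f$) by commuting $\Phi^H$ past the pushforward, pullback, and smash product via \cref{geomfp_preserves}, and then deduce both uniqueness and the coherences in \cref{fig:icon_unit,fig:icon_odot} from rigidity. The paper phrases the last step slightly more tersely---``these base-change objects admit no nontrivial automorphisms''---which is exactly the $n=0$ instance of \cref{phi_h_rigidity} (cf.\ \cref{rmk:single_iso}), so your explicit identification of each diagram with a rigid multi-span is just a more detailed version of the same argument.
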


\begin{proof}
The two pseudofunctors $\Phi^H \circ []$ and $ [] \circ (-)^H$ agree on objects, sending each $G$-space $A$ to the $WH$-space $A^H$.  There is an isomorphism of spectra over $A^H \times B^H$
\begin{equation}\label{icon:component} \eta\colon \Phi^H \left( \bcr{A}{f}{B} \right) \cong \bcr{A^H}{f^H}{B^H} ,
\end{equation}
which we see by commuting $\Phi^H$ past the smash product, pullback, and pushforward, using the isomorphisms in \eqref{phi_barsmash}, \eqref{phi_pullback}, and  \eqref{phi_pushforward}.
It is unique by the rigidity result in \cref{phi_h_rigidity}. 

To see that $\eta$ defines a vertical natural isomorphism, we write out \cref{df_iicon} in the case of \eqref{geometric_fp_icon}. Since the source bicategory is the 1-category $G\Top$ we can again dispense with \cref{icon_nat} and focus on \cref{icon_2_unit,icon_2_composition}, which become \cref{fig:icon_unit,fig:icon_odot}, respectively. Both of these diagrams commute because these base change objects admit no nontrivial automorphisms.
\begin{figure}[h]
	\begin{subfigure}[t]{27em}
		\resizebox{\textwidth}{!}{
		 \xymatrix{
		\Phi^H \left( \bcr{B}{g}{C} \right) \odot \Phi^H \left( \bcr{A}{f}{B} \right) \ar[d]^-{m_{\Phi^H}} \ar[r]^-{\eta \odot \eta} &
		\bcr{B^H}{g^H}{C^H} \odot \bcr{A^H}{f^H}{B^H} \ar[d]^-{m_{[]}} \\
		\Phi^H \left( \bcr{B}{g}{C} \odot \bcr{A}{f}{B}\right) \ar[d]^-{m_{[]}} &
		\bcr{A^H}{g^H \circ f^H}{C^H} \ar@{=}[d] \\
		\Phi^H \left( \bcr{A}{g \circ f}{C} \right) \ar[r]^-\eta &
		\bcr{A^H}{(g \circ f)^H}{C^H}
			}
		}
		\caption{The vertical composition axiom}\label{fig:icon_odot}
	\end{subfigure}
	\begin{subfigure}[t]{15em}
		\resizebox{\textwidth}{!}{
			\xymatrix{
				U_{A^H} \ar[d]^-{i_{\Phi^H}} \ar@{=}[r] & U_{A^H} \ar@{=}[d]^-{i_{[]}} \\
	\Phi^H(U_A) \ar@{=}[d]^-{i_{[]}} & \bcr{A^H}{=}{A^H} \ar@{=}[d] \\
	\Phi^H\left( \bcr{A}{\id}{A} \right) \ar[r]^-\eta & \bcr{A^H}{\id}{A^H}
			}
		}
		\caption{The vertical unit axiom}\label{fig:icon_unit}
	\end{subfigure}
	\caption{Commutative diagrams for an icon $\Phi^H \circ [] \to [] \circ (-)^H$.}\label{fig:icon}
\end{figure}
\end{proof}

\begin{rmk}
The reader may have noticed that we have not included anything about the compatibility between the geometric fixed points functor and the shadowed $n$-Fuller structure.  This is not because there is nothing to say, but because we have no need of it in \cite{mp1}. The techniques illustrated here can be used to prove any additional  compatibilities when needed.
\end{rmk}

\subsection{Reduction of groups}
We similarly define the strong shadow functor of point-set bicategories
\[ (\iota_H)^*\colon G\Osp^c/G\Top \to H\Osp^c/H\Top \]
and the vertical natural isomorphism (invertible icon) between the pseudofunctors
\[ (\iota_H)^* \circ [] \textup{ and } [] \circ (\iota_H)^*\colon \ G\Top \to H\Osp^c/H\Top \]
by forgetting the $G$-actions down to $H$-actions. Since this does not change the underlying set, we can pick the isomorphisms $m$, $s$, $i$, and $\eta$ to simply be identity maps. This makes the following a less complicated version of 
\cref{prop:geo_ssf,prop:geo_ssf_2}.

\begin{prop}\label{prop:forget_ssf}
	The above data make $(\iota_H)^*$ into a strong shadow functor, with a vertical natural isomorphism $(\iota_H)^* \circ [] \cong [] \circ (\iota_H)^*$.
\end{prop}

\begin{rmk}
	A rigidity theorem is not needed in this setting, but \cref{rigidity} applies here. It tells us that the functor that acts by any rigid multi-span, followed by the forgetful functor, has a unique automorphism that commutes with all non-equivariant maps.
\end{rmk}

\subsection{Descending to the homotopy category}\label{sec:htpy_geo_fp}
The process of showing that these functors descend to the homotopy category follows the same structure as in \cref{apply_parametrized_spectra}. We begin by recalling the following analog of \cref{convenient_main_thm}.

\begin{lem} \label{geom_fp_properties}\cite[Lem 9.6]{mp1} The geometric fixed points functor  preserves freely $f$-cofibrant spectra and stable equivalences between them.
\end{lem}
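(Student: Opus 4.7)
The plan is to establish the two assertions in turn, leveraging \cref{geomfp_preserves} together with the description of $\Phi^H$ as a coequalizer of functors built from the operations $(-)^H$, $\wedge$, and wedge sum on $G$-spaces.

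For preservation of freely $f$-cofibrant spectra, recall from \cite[Def 4.2.2]{convenient} that such a spectrum is a retract of a generalized cell complex whose attaching maps have the form $F_V i$ for $i$ a generating $f$-cofibration of retractive $G$-spaces over the base. From the coequalizer formula defining $\Phi^H$, one checks directly that $\Phi^H$ commutes with the pushouts and transfinite sequential colimits used to build such a complex, because $(-)^H$ on $G$-spaces has these properties. The isomorphisms in \cref{geomfp_preserves} combine to give a canonical isomorphism $\Phi^H F_V Z \cong F_{V^H} Z^H$, and the $H$-fixed-points functor sends $f$-cofibrations of retractive $G$-spaces over $A$ to $f$-cofibrations of retractive $WH$-spaces over $A^H$. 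Therefore $\Phi^H$ sends generating cells to generating cells and cell complexes to cell complexes. Retracts are sent to retracts, so freely $f$-cofibrancy is preserved.

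For preservation of stable equivalences between freely $f$-cofibrant spectra, the strategy is a cellular argument of Ken Brown type. Using the first assertion and the isomorphism $\Phi^H F_V Z \cong F_{V^H} Z^H$, one reduces to checking two things on generators: that $\Phi^H$ sends generating acyclic $f$-cofibrations to stable equivalences, and that $\Phi^H$ commutes with the cofiber sequences and filtered colimits used to compute the equivariant stable homotopy groups that detect stable equivalences. Both reductions rely on the fact, established in \cref{geomfp_preserves}, that $\Phi^H$ commutes up to natural isomorphism with $\barsmash$, $f^*$, and $f_!$, together with its preservation of pushouts along closed inclusions and transfinite composites. Once these are in hand, the comparison on generating acyclic cofibrations follows from the corresponding non-parametrized computation in \cite{mandell_may}, applied fiberwise.

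The main obstacle is the verification on generators: checking that for each generating acyclic $f$-cofibration $i$ between retractive $G$-spaces, the map $\Phi^H F_V i$ is a stable equivalence of parametrized $WH$-spectra over the relevant $H$-fixed base. In the non-parametrized setting this is classical; in the parametrized setting the argument is formally the same once one notes that $\Phi^H$ respects the full fibered structure via \cref{geomfp_preserves}, so the computation reduces to a levelwise statement about $H$-fixed points of cofibrant retractive $G$-spaces. With this step in place, both claims of the lemma follow by the cellular inductions sketched above.
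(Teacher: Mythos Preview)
The paper does not supply its own proof of this lemma; it is stated with a citation to \cite[Lem 9.6]{mp1} and used as a black box. So there is nothing in the present paper to compare your proposal against.

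That said, your outline has real gaps. The first half is essentially correct in spirit, though you should be explicit that $(-)^H$ commutes with pushouts only along closed inclusions (or $f$-cofibrations), not with arbitrary pushouts; this is what makes the cellular induction go through. The second half is where the argument is incomplete. A Ken Brown reduction to generating acyclic cofibrations presupposes a model structure in which the freely $f$-cofibrant spectra are exactly the cofibrant objects and the stable equivalences are the weak equivalences, and you have not identified such a structure or its generating acyclic cofibrations. In the stable model structures on parametrized $G$-spectra, the generating acyclic cofibrations are not merely free spectra on space-level acyclic cofibrations: they also include the maps (the $k_V$-type cells) that encode invertibility of the bonding maps, and checking that $\Phi^H$ sends those to stable equivalences is precisely the nontrivial content. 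Your phrase ``applied fiberwise'' is doing all of the work and none of the verification. Without that step, the preservation-of-equivalences claim is not established.
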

Unfortunately, we do not have a proof that $\Phi^H$ preserves level $h$-fibrancy for freely $f$-cofibrant spectra.\footnote{The issue is that the coequalizer destroys fibrancy. If we attempt to simplify the coequalizer to a colimit of pushouts of fibrant pieces, as in the proof of \cite[Thm 4.4.6]{convenient}, we would either have to use a free presentation, in which case we can't make the input spaces fibrant, or a semifree presentation, in which case we lose control of $\Phi^H$ because it does not have a nice formula on semifree inputs.} As a result, we cannot use the level $h$-fibrant spectra as our \fibrant objects, since they will not be preserved by $\Phi^H$. To get around this, we introduce a more restrictive class of ``$P$-fibrant'' spectra that are preserved by $\Phi^H$.

\begin{lem}\label{phi_p_commute}
For freely $f$-cofibrant spectra $X$, there is a natural isomorphism 
\[ \Phi^H PX \cong P\Phi^H X.\]
\end{lem}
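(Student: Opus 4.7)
The plan is to exploit the explicit construction of the level $h$-fibrant replacement $P$ as a levelwise fiberwise mapping path space. Concretely, for a parametrized space $Y$ over $B$ one has $PY \cong (\ev_1)_! (\ev_0)^* Y$, where $\ev_0, \ev_1 \colon B^I \to B$ are the endpoint-evaluation maps; this is applied at each representation level of a parametrized spectrum $X$, with structure maps inherited by functoriality.

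First, I would observe that on the space level, taking $H$-fixed points commutes with cotensoring by the interval: $(B^I)^H = (B^H)^I$, with the endpoint-evaluations restricting to the corresponding evaluations on $B^H$. Combined with the space-level analogue of \cref{geomfp_preserves} that $H$-fixed points commute with pullback and pushforward on parametrized spaces, this yields a natural isomorphism $(PY)^H \cong P(Y^H)$. From this I would build the desired isomorphism $\Phi^H PX \cong P \Phi^H X$ at each representation level, applying the defining formula for $\Phi^H$ and the naturality of the space-level identification on each $X(V)$.

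Second, I would verify that these levelwise isomorphisms are compatible with the spectrum structure maps on both sides, as well as with the smash-with-free-spectrum operations, wedge, and coequalizer appearing in the definition of $\Phi^H$. Structure-map compatibility follows from naturality of the space-level isomorphism in each input. For the coequalizer and wedge, I would use that the functors $(\ev_0)^*$ and $(\ev_1)_!$ on parametrized spaces preserve the relevant colimits: pushforward is a left adjoint, and pullback commutes with the wedges and reflexive coequalizers of compactly generated retractive spaces appearing here (using the (CG) convention).

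The hard part will be bookkeeping the coequalizer/wedge compatibility, which mixes different representation levels through the free-spectrum smashes and the morphisms in $\mathscr{J}_G^H$. Once this is established, the uniqueness and canonicity of the resulting isomorphism follow from \cref{phi_h_rigidity}: both $\Phi^H P$ and $P\Phi^H$ can be described as composites of pullback, pushforward, and external smash product operations on freely $f$-cofibrant spectra, followed by (or preceded by) the geometric fixed-points functor. Consequently any natural automorphism that respects all $H$-equivariant maps must be the identity, and the candidate isomorphism we construct is determined uniquely in the appropriate sense.
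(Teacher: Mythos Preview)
Your proposal is correct but takes a considerably longer route than the paper's argument. The paper observes that $P$ is, at the spectrum level, literally a composition of a pullback and a pushforward along the evaluation maps of the path space. Since \cref{geomfp_preserves} already records that $\Phi^H$ commutes with pullback and pushforward on freely $f$-cofibrant spectra (equations \eqref{phi_pullback} and \eqref{phi_pushforward}), the desired isomorphism $\Phi^H P \cong P\Phi^H$ follows immediately by composing those two natural isomorphisms. No descent to the space level, no coequalizer bookkeeping, and no rigidity argument is needed.

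Your approach instead drops to space-level $H$-fixed points, establishes $(PY)^H \cong P(Y^H)$ there, and then has to thread this identification through the wedge/coequalizer presentation of $\Phi^H$ to get a spectrum-level statement. This works, but it is essentially re-deriving a special case of \cref{geomfp_preserves} from scratch rather than invoking it. The payoff of the paper's approach is that all of the delicate compatibility checks you flag as ``the hard part'' have already been absorbed into \cref{geomfp_preserves}; once that lemma is available, the present lemma is a two-line consequence. Your appeal to \cref{phi_h_rigidity} for uniqueness is fine but also unnecessary here, since the isomorphism is produced directly from the canonical isomorphisms of \cref{geomfp_preserves}.
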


\begin{proof}
	The functor $P$ is a composition of a pullback and a pushforward (\cite[\S 2.7]{convenient}), so this follows from the isomorphisms \eqref{phi_pullback} and \eqref{phi_pushforward} from \cref{geomfp_preserves}.
\end{proof}

\begin{defn}\label{p_fibrant}
	A spectrum $Y \in G\Osp^c(C)$ is {\bf $P$-fibrant} if it is isomorphic to a spectrum of the form $f_!g^* PX$, where $X \in G\Osp^c(A)$ (so $X$ is freely $f$-cofibrant), $g\colon B \to A$ is any map, and $f\colon B \to C$ is an equivariant Hurewicz fibration.
\end{defn}

\begin{lem}\label{p_fibrant_preserved}
$P$-fibrant spectra are preserved by
\begin{itemize}
	\item geometric fixed points $\Phi^H$,
	\item external smash product $\barsmash$,
	\item pullback $g^*$, and
	\item pushforward along any equivariant Hurewicz fibration $f_!$.
\end{itemize}
As a result, they are also preserved by $\odot$ and $\sh{}$.
\end{lem}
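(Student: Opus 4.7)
The plan is to verify each of the four preservation claims by directly manipulating the defining form $Y \cong f_! g^* PX$ of a $P$-fibrant spectrum, using Beck–Chevalley and the fact that Hurewicz fibrations are closed under the relevant operations.

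First, for geometric fixed points: given $Y \cong f_! g^* PX$ with $f\colon B \to C$ an equivariant Hurewicz fibration, apply the isomorphisms \eqref{phi_pullback} and \eqref{phi_pushforward} from \cref{geomfp_preserves} to get $\Phi^H Y \cong (f^H)_! (g^H)^* \Phi^H(PX)$, and then use \cref{phi_p_commute} to rewrite this as $(f^H)_! (g^H)^* P(\Phi^H X)$. Here $\Phi^H X$ is freely $f$-cofibrant by \cref{geom_fp_properties}, and the $H$-fixed points functor sends equivariant Hurewicz fibrations to Hurewicz fibrations, so $f^H$ qualifies. Thus $\Phi^H Y$ is $P$-fibrant.

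Next, for the external smash product: if $Y_i \cong f_{i!} g_i^* PX_i$ for $i = 1,2$, then
\[ Y_1 \barsmash Y_2 \cong (f_1 \times f_2)_! (g_1 \times g_2)^* (PX_1 \barsmash PX_2) \cong (f_1 \times f_2)_! (g_1 \times g_2)^* P(X_1 \barsmash X_2), \]
using that $\barsmash$ commutes with pullback and pushforward (since the SMBF structure gives this, and the final step uses strong monoidality of $P$ from \cref{convenient_main_thm}). A product of Hurewicz fibrations is a Hurewicz fibration, and $X_1 \barsmash X_2$ is freely $f$-cofibrant, so the result is $P$-fibrant.

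For pullback $g^*$ along an arbitrary map and pushforward $f_!$ along a Hurewicz fibration: given $Y \cong f'_! h^* PX$ with $f'$ a Hurewicz fibration, form the pullback square of $g$ along $f'$ to obtain $\tilde{f}'$ (a Hurewicz fibration, since pullback preserves Hurewicz fibrations) and $\tilde g$. The Beck–Chevalley isomorphism gives $g^* f'_! \cong \tilde{f}'_! \tilde g^*$, so $g^* Y \cong \tilde{f}'_! (h \tilde g)^* PX$, which is $P$-fibrant. For pushforward along a Hurewicz fibration $f$, simply compute $f_! Y \cong (f f')_! h^* PX$, and note that $f f'$ is a Hurewicz fibration as a composite of two.

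Finally, the statements about $\odot$ and $\sh{}$ follow by inspecting the defining multi-spans \eqref{eq:odot} and the shadow multi-span: in both cases, the left leg of the span is a projection (respectively $1\pi 1$ and $\pi$), and any projection $X \to Y$ is a Hurewicz fibration (since the constant homotopy provides a lift). Thus $\odot$ and $\sh{}$ are built from external smash, pullback, and pushforward along a Hurewicz fibration, each of which preserves $P$-fibrancy by the first three bullets. The main point of care throughout is keeping track of which maps qualify as Hurewicz fibrations at each stage; once that bookkeeping is done, the argument is formal.
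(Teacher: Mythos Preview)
Your proof is correct and follows essentially the same approach as the paper: commuting $\Phi^H$ past $f_!$, $g^*$, and $P$ via \cref{geomfp_preserves} and \cref{phi_p_commute}; using Beck--Chevalley for pullback; composing fibrations for pushforward; and using the SMBF structure together with strong monoidality of $P$ for $\barsmash$. If anything you are slightly more careful than the paper in noting explicitly that $f^H$ remains a Hurewicz fibration and in spelling out why the left legs of the $\odot$ and $\sh{}$ multi-spans are fibrations.
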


\begin{proof}
	Suppose $Y = f_!g^*X$ is $P$-fibrant. Then $\Phi^H Y = \Phi^H(f_!g^* PX) \cong f_!g^* P(\Phi^H X)$, using \cref{geomfp_preserves,phi_p_commute}. The spectrum $\Phi^H X$ is freely $f$-cofibrant, and therefore $\Phi^H Y$ is $P$-fibrant.
	
	Since the pushforward is the last functor applied in the definition of a $P$-fibrant spectrum,  a pushforward of a $P$-fibrant spectrum along an equivariant Hurewicz fibration is again $P$-fibrant. If we instead take a pullback $h^*f_!g^* PX$, we use the Beck-Chevalley isomorphism from \eqref{diagonal_square} to rewrite it as $j_!k^*g^* PX$, where $j$ is a pullback of $f$ and is therefore a fibration. Therefore a pullback of a $P$-fibrant spectrum is $P$-fibrant.
	
\cref{convenient_main_thm}, along with the observation that $\Osp$ is a symmetric monoidal bifibration, shows that for $P$-fibrant spectra $Y_1$ and $Y_2$ we have
	\begin{align*}
		Y_1 \barsmash Y_2 &\cong [(f_1)_!(g_1)^* PX_1] \barsmash [(f_2)_!(g_2)^* PX_2] \\
		&\cong (f_1 \times f_2)_!\left( [(g_1)^* PX_1] \barsmash [(g_2)^* PX_2]\right) \\
		&\cong (f_1 \times f_2)_!(g_1 \times g_2)^* \left( PX_1 \barsmash PX_2\right) \\
		&\cong (f_1 \times f_2)_!(g_1 \times g_2)^* P ( X_1 \barsmash X_2 ).
	\end{align*}
Since $X_1 \barsmash X_2$ is freely $f$-cofibrant,  $Y_1 \barsmash Y_2$ is $P$-fibrant.
\end{proof}

\begin{lem}\label{p_fibrant_equiv}
Every $P$-fibrant spectrum is level $h$-fibrant. Each of the operations in \cref{p_fibrant_preserved} preserves stable equivalences between $P$-fibrant spectra.
\end{lem}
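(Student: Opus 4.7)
My plan is to prove the two assertions in sequence, with (1) feeding into the pullback case of (2).

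For the first assertion, suppose $Y \cong f_! g^* PX$ with $X$ freely $f$-cofibrant, $g\colon B \to A$ arbitrary, and $f\colon B \to C$ a Hurewicz fibration. The strong symmetric monoidal level $h$-fibrant replacement $P$ of \cref{convenient_main_thm} produces a level $h$-fibrant spectrum $PX$ by definition. By the pullback clause of \cref{convenient_main_thm}, $g^* PX$ is again level $h$-fibrant. The pushforward clause then gives that $f_! g^* PX$ is level $h$-fibrant, because $f$ is a Hurewicz fibration. Hence $Y$ is level $h$-fibrant.

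For the second assertion, we handle the four operations separately. First note that $P$-fibrant spectra lie in $\Osp^c$ (freely $f$-cofibrant), since $P$, $g^*$, and $f_!$ all preserve freely $f$-cofibrant spectra by the conventions of \cref{convenient_main_thm}. Then:

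\begin{itemize}
\item Geometric fixed points: \cref{geom_fp_properties} says $\Phi^H$ preserves equivalences between freely $f$-cofibrant spectra, which covers all $P$-fibrant inputs.
\item External smash product: \cref{convenient_main_thm} says $\barsmash$ preserves all stable equivalences of freely $f$-cofibrant spectra.
\item Pullback $g^*$: by (1), every $P$-fibrant spectrum is level $h$-fibrant, and \cref{convenient_main_thm} says $g^*$ preserves stable equivalences between level $h$-fibrant spectra.
\item Pushforward $f_!$ along a Hurewicz fibration: \cref{convenient_main_thm} says $f_!$ preserves all stable equivalences.
\end{itemize}

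The only place where the argument is not completely immediate is the pullback case, and the main obstacle is precisely the asymmetry that pullback only preserves equivalences between level $h$-fibrant inputs; this is exactly why we need (1) before tackling (2). Once (1) is in hand everything else is a direct appeal to \cref{convenient_main_thm} and \cref{geom_fp_properties}.
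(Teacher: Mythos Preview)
Your proof is correct and is precisely an unpacking of the paper's one-line argument, which simply cites \cref{convenient_main_thm} and \cref{geom_fp_properties}. You have made explicit the chain $PX \to g^*PX \to f_!g^*PX$ for part~(1) and the case analysis for part~(2), but the content and approach are the same.
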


\begin{proof}
	This follows directly from \cref{convenient_main_thm,geom_fp_properties}.
\end{proof}

We now show that the structures from \cref{prop:geo_ssf,prop:geo_ssf_2} descend to the homotopy category. Recall the notion of a strong shadow functor from \cref{df:ssfun}.
\begin{prop}\label{prop:geo_ssf_derived}
The strong shadow functor $\Phi^H$ from \cref{prop:geo_ssf} descends to a strong shadow functor on homotopy bicategories
\[ \Phi^H\colon G\Ex \to WH\Ex. \]
\end{prop}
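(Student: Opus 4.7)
The plan is to follow the same template as in \cref{apply_parameterized_spectra}, replacing the class of level $h$-fibrant \fibrant objects from earlier sections with the $P$-fibrant spectra of \cref{p_fibrant} throughout. The reason we cannot simply reuse the earlier choice is that $\Phi^H$ is not known to preserve level $h$-fibrancy, so the previous \fibrant objects would fail to be preserved by the functors we care about here. Since $\Phi^H$ preserves all equivalences between freely $f$-cofibrant spectra by \cref{geom_fp_properties}, and every $P$-fibrant spectrum is freely $f$-cofibrant and level $h$-fibrant by \cref{p_fibrant_equiv}, $\Phi^H$ is right-deformable with right-derived functor given by precomposing with a $P$-fibrant replacement.

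The first step is to show that for each isomorphism in the strong shadow functor structure, namely $m_{\Phi^H}$, $i_{\Phi^H}$, and $s_{\Phi^H}$, the two lists of functors being identified are coherently right-deformable. For $m_{\Phi^H}$, the lists send a pair $(X,Y)$ to $\Phi^H(X) \odot \Phi^H(Y)$ and to $\Phi^H(X \odot Y)$; taking pairs of $P$-fibrant spectra as the \fibrant objects, both composites preserve \fibrancy and equivalences between \fibrant objects by \cref{p_fibrant_preserved,p_fibrant_equiv}, so \cref{rd_descends,rd_descends_2} produce a canonical derived isomorphism $\ti m_{\Phi^H}$. The same argument with $\sh{}$ in place of $\odot$ handles $s_{\Phi^H}$. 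For $i_{\Phi^H}$ the only subtlety is that $U_A$ is not itself $P$-fibrant; as in \cref{unit_preserves_equivalences} I would expand the \fibrant objects to include $U_A$ by verifying the map $U_A \to PU_A$ induces an equivalence when smashed with any $P$-fibrant spectrum, using \cref{convenient:preserve_equivalences} and the Serre fibration argument from that lemma (which transports cleanly to the $WH$-space $A^H$ under $\Phi^H$).

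The second step is to verify the coherence diagrams in \cref{fig:shadow_odot,fig:shadow_unit,fig:shadow_shadow}. For each diagram, every composite must be exhibited as a coherently right-deformable list. With $P$-fibrant spectra as the default \fibrant objects, expanded where needed to include units paired with $P$-fibrant spectra exactly as in the proof of \cref{ex_is_coherently_deformable_2}, each successive functor preserves the chosen \fibrant objects by \cref{p_fibrant_preserved} and preserves equivalences between them by \cref{p_fibrant_equiv}. Then \cref{rd_descends_3} transports the point-set commutativity established in \cref{prop:geo_ssf} to commutativity on the homotopy category.

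The main obstacle is precisely the failure of $\Phi^H$ to preserve level $h$-fibrancy, which forces the introduction of the more restrictive $P$-fibrant class, and the corresponding need to verify that all the relevant functors in sight ($\Phi^H$, $\barsmash$, pullback, pushforward along equivariant Hurewicz fibrations, $\odot$, $\sh{}$, and $P$ itself) preserve this smaller class. This is essentially bundled into \cref{p_fibrant_preserved,p_fibrant_equiv,phi_p_commute}. Once those lemmas are in hand, the remainder of the argument is the now-familiar mechanical bookkeeping with coherently right-deformable lists from \cref{sec:deform}.
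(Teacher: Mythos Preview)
Your proposal is correct and follows essentially the same approach as the paper: replace level $h$-fibrant spectra by $P$-fibrant spectra as the default \fibrant objects (precisely because $\Phi^H$ is not known to preserve level $h$-fibrancy), use \cref{p_fibrant_preserved,p_fibrant_equiv} to check preservation of \fibrant objects and equivalences, and expand the \fibrant class to include units where needed for the unit isomorphism and the unitor coherence. One small point of precision: for the bare isomorphism $\ti i_{\Phi^H}\colon U_{A^H} \simeq \Phi^H(U_A)$ there is no $\odot$ in sight, so the expansion you describe (pairing $U_A$ with a $P$-fibrant spectrum under $\odot$) is really only needed for the coherence diagram in \cref{fig:shadow_unit}; the paper handles this by adjoining both $U_A$ and $\Phi^H U_A \cong U_{A^H}$ to the \fibrant objects in the relevant categories and then invoking \cref{base_change_preserves_equivalences} for $\odot$.
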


\begin{proof}
On 0-cells we again take each $G$-space $A$ to the $WH$-space $A^H$. On morphism categories
	\[ \Ho G\Osp^c(A \times B) \to \Ho WH\Osp^c(A^H \times B^H), \]
	we take right-derived functor of $\Phi^H$. By \cref{geom_fp_properties}, $\Phi^H$ preserves all stable equivalences of freely $f$-cofibrant spectra, so the right-derived functor is $\Phi^H$ itself. We define the isomorphisms on the homotopy category
	\begin{align*} \ti m_{\Phi^H}\colon \Phi^H(X) \odot^\R \Phi^H(Y) &\simeq \Phi^H(X \odot^\R Y) \\	
	 \ti i_{\Phi^H}\colon U_{A^H} &\simeq \Phi^H(U_A) \\
	 \ti s_{\Phi^H}\colon \sh{\Phi^H(X)}^\R & \simeq \Phi^H\sh{X}^\R
	\end{align*}
from their point-set versions by checking that each composite of functors that appears is coherently right-deformable. For $\ti m_{\Phi^H}$ and $\ti s_{\Phi^H}$, we  take the tuples of $P$-fibrant spectra as our \fibrant objects. For $\ti i_{\Phi^H}$, we follow the  first part of \cref{ex_is_coherently_deformable_3} and include the unit objects $U_A \in \Osp^c(A \times A)$ and $U_{A^H} \in \Osp^c(A^H \times A^H)$ in the subcategory of \fibrant objects. \cref{rd_descends_2} then defines the maps $\ti m_{\Phi^H}$, $\ti i_{\Phi^H}$, and $\ti s_{\Phi^H}$.

We now check that each of the lists of functors in \cref{fig:strong_shadow} is coherently right-deformable. We illustrate the argument with 
the composite in the middle left entry in \cref{fig:shadow_odot}:
\begin{align*}
G\Osp^c(A \times B) \times G\Osp^c(B \times C) &\times G\Osp^c(C \times D)
 \xto{\odot \times \id} G\Osp^c(A \times C)\times G\Osp^c(C \times D) \\
& \xto{\Phi^H \times \Phi^H} WH\Osp^c(A^H \times C^H)\times WH\Osp^c(C^H \times D^H) \\
& \xto{\odot} WH\Osp^c(A^H \times D^H).
\end{align*}
In this case we take the \fibrant objects to be all tuples of $P$-fibrant spectra. They are preserved by \cref{p_fibrant_preserved}, and stable equivalences between them are preserved by \cref{p_fibrant_equiv}. 

We also illustrate the argument with the bottom left entry in \cref{fig:shadow_unit}:
\begin{align*}
(\ast)\times G\Osp^c(A \times B)
& \xto{U_A \times \id} G\Osp^c(A \times A)\times G\Osp^c(A \times B) \\
& \xto{\Phi^H \times \Phi^H} WH\Osp^c(A^H \times A^H)\times WH\Osp^c(A^H \times B^H) \\
& \xto{\odot} WH\Osp^c(A^H \times B^H).
\end{align*}
In this case we take the \fibrant objects to be all $P$-fibrant spectra, except in $G\Osp^c(A \times A)$ we also take $U_A$, and in $WH\Osp^c(A^H \times A^H)$ we also take $\Phi^H U_A$. Then the \fibrant objects are again preserved. To see that the stable equivalences between these \fibrant objects are preserved, for $\Phi^H$ we use the fact that it preserves all stable equivalences of freely $f$-cofibrant spectra (\cref{geom_fp_properties}), while for $\odot$ we use \cref{base_change_preserves_equivalences} and the fact that $\Phi^H U_A \cong U_{A^H}$ is isomorphic to a unit object.

We conclude by \cref{rd_descends_3} that the derived versions of the diagrams in \cref{fig:strong_shadow} also commute, defining a strong shadow functor on the homotopy category.
\end{proof}

\begin{rmk}
	As in \cref{just_put_in_a_ton_of_ps}, these isomorphisms in the homotopy category can be written explicitly by inserting copies of $P$ everywhere. This is even more true in this case since our fibrant objects are defined to be images of spectra of the form $PX$.
\end{rmk}

\begin{prop}\label{prop:geo_ssf_2_derived}
The vertical natural isomorphism from \cref{prop:geo_ssf_2} descends to a vertical natural isomorphism on homotopy categories between the pseudofunctors
\[ \Phi^H \circ [] \textup{ and } [] \circ (-)^H\colon \ \Ho G\Top \to WH\Ex. \]
\end{prop}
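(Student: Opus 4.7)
The plan is to mirror the proof of \cref{prop:geo_ssf_derived} and follow the general recipe of \cref{sec:deform}. First I would define the derived isomorphism
\[ \ti\eta\colon \Phi^H\left(\bcr{A}{f}{B}\right) \simeq \bcr{A^H}{f^H}{B^H} \]
by identifying both sides as right-derived versions of a common operation. The two composites in question are
\[ (\ast) \xto{[f]} G\Osp^c(B \times A) \xto{\Phi^H} WH\Osp^c(B^H \times A^H) \]
and
\[ (\ast) \xto{(-)^H} (\ast) \xto{[f^H]} WH\Osp^c(B^H \times A^H). \]
For the first list I take the \fibrant objects in $G\Osp^c(B \times A)$ to be either the $P$-fibrant spectra or the base-change object $\bcr{A}{f}{B}$ itself, paralleling \cref{base_change_preserves_equivalences,fib_source_of_confusion}. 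By \cref{p_fibrant_preserved,geomfp_preserves} these are preserved by $\Phi^H$. The second list is trivially coherently right-deformable since its domain is $(\ast)$. \cref{rd_descends,rd_descends_2} then produce $\ti\eta$ from its point-set version in \eqref{icon:component}.

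Next I would verify that the derived versions of \cref{fig:icon_unit,fig:icon_odot} commute. For the composition axiom in \cref{fig:icon_odot}, the relevant composites run
\[ (\ast) \times (\ast) \xto{[g] \times [f]} G\Osp^c(C \times B) \times G\Osp^c(B \times A) \xto{\Phi^H \times \Phi^H} WH\Osp^c(C^H \times B^H) \times WH\Osp^c(B^H \times A^H) \xto{\odot} WH\Osp^c(C^H \times A^H) \]
and its variant where $\odot$ is applied before $\Phi^H$. I would take the \fibrant objects in each product category to be either tuples of $P$-fibrant spectra or tuples where every spectrum is the appropriate base-change 1-cell (or a composite thereof). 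Preservation of \fibrant objects follows from \cref{p_fibrant_preserved,geomfp_preserves} together with the fact that the action of a multi-span sends base-change 1-cells to base-change 1-cells up to isomorphism. Preservation of weak equivalences uses \cref{p_fibrant_equiv,base_change_preserves_equivalences}. An analogous choice of \fibrant objects, enlarged further to include the unit 1-cells $U_A$ and $U_{A^H}$ as in the proof of \cref{prop:geo_ssf_derived}, handles the unit axiom \cref{fig:icon_unit}. Then \cref{rd_descends_3} delivers both coherences in $\Ho$.

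The main obstacle will be confirming that the enlarged subcategories of \fibrant objects are genuinely preserved by $\Phi^H$ and by $\odot$ at every stage. The difficulty is that $P$-fibrant spectra and base-change 1-cells are incompatible in the sense that neither class contains the other, and $\Phi^H$ does not preserve level $h$-fibrancy in general, so one must verify by hand that $\Phi^H\bcr{A}{f}{B}$ is again of a form that behaves well under subsequent $\odot$ operations. This amounts to unpacking the isomorphism $\Phi^H\bcr{A}{f}{B} \cong \bcr{A^H}{f^H}{B^H}$ at the point-set level via \eqref{phi_barsmash}, \eqref{phi_pullback}, \eqref{phi_pushforward}, and noting that both sides are freely $f$-cofibrant suspension spectra of spaces with Serre-fibration projections, so that \cref{base_change_preserves_equivalences,convenient:preserve_equivalences} apply. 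Once this verification is complete, the rest of the argument is a direct application of the machinery established in \cref{sec:deform,apply_parameterized_spectra}.
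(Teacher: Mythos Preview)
Your proposal is correct and follows essentially the same approach as the paper: choose \fibrant\ objects to be either all $P$-fibrant or all the relevant base-change 1-cells, verify preservation via \cref{p_fibrant_preserved,geom_fp_properties,base_change_preserves_equivalences}, and invoke \cref{rd_descends_3}. The ``main obstacle'' you describe is less severe than you suggest: once you have the point-set isomorphism $\Phi^H\bcr{A}{f}{B} \cong \bcr{A^H}{f^H}{B^H}$, the image under $\Phi^H$ is literally (isomorphic to) a base-change 1-cell, so \cref{base_change_preserves_equivalences} applies directly without further unpacking of fibration conditions.
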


\begin{proof}
Recall the pseudofunctor $[]$ was defined in \cref{base_change_functor} and its coherence isomorphisms were derived in \cref{ex_is_coherently_deformable_4}. 

We take the isomorphism $\eta\colon \Phi^H \left( \bcr{A}{f}{B} \right) \cong \bcr{A^H}{f^H}{B^H}$ from \eqref{icon:component} and check that the derived versions of the diagrams in \cref{fig:icon} commute. Again, it suffices to check that each of the lists of functors that appears is coherently right-deformable. We illustrate the argument with the entry in the top left corner of \cref{fig:icon_odot}:
\begin{align*}
(\ast) \times (\ast)
& \xto{[g] \times [f]} G\Osp^c(C \times B)\times G\Osp^c(B \times A) \\
& \xto{\Phi^H \times \Phi^H} WH\Osp^c(C^H \times B^H)\times WH\Osp^c(B^H \times A^H) \\
& \xto{\odot} WH\Osp^c(C^H \times A^H).
\end{align*}
As in \cref{ex_is_coherently_deformable_4}, we take the \fibrant objects to be tuples in which 
\begin{itemize}
\item every spectrum in the tuple is $P$-fibrant or 
\item every spectrum in the tuple is the appropriate base change 1-cell. 
\end{itemize}
By construction, the \fibrant objects are preserved, and the stable equivalences between them are preserved by \cref{geom_fp_properties,base_change_preserves_equivalences}.

We conclude by \cref{rd_descends_3} that the derived versions of the diagrams in \cref{fig:icon} also commute, defining a vertical natural isomorphism on the homotopy category.
\end{proof}

\begin{prop}\label{prop:forget_ssf_derived}
The forgetful functor $(\iota_H)^*$ from \cref{prop:forget_ssf} descends to a strong shadow functor on the homotopy bicategory, with a vertical natural isomorphism $(\iota_H)^* \circ [] \cong [] \circ (\iota_H)^*$ as functors $\Ho G\Top \to H\Ex$.
\end{prop}

\begin{proof} Since $(\iota_H)^*$ preserves stable equivalences and fibrations \cite[Sections 7.2 and 7.4]{convenient},
the proof is the same as \cref{prop:geo_ssf_derived,prop:geo_ssf_2_derived}, but much less complicated. It is not even necessary to use $P$-fibrant spectra; the level $h$-fibrant spectra work perfectly well for this proof.
\end{proof}

\subsection{Geometric fixed points and change of group in the fiberwise case} Fix a base space $B$ with trivial $G$-action. Then the above results apply equally well to the bicategory $G\Ex_B$ of $G$-spectra over $G$-spaces $A$ with equivariant Hurewicz fibrations $A \to B$, which we first encountered in \cref{sec:gexb}. Each multi-span over $B$ as in \eqref{eq:multi_span_B} gives an action as in \eqref{action_of_multi_span_fiberwise}, but with the geometric fixed points $\Phi^H$ thrown in:
\begin{equation}\label{phi_h_action_fiberwise}
\xymatrix @R=5pt {
	\prod_i G\Osp^c(A_i) \ar[r] & WH\Osp^c(C) \\
	(X_1,\ldots,X_n) \ar@{|->}[r] & \Phi^H f_!(g_1,\ldots,g_n)^*(X_1 \barsmash_B \ldots \barsmash_B X_n) \\
	& \cong f^H_!(g^H_1,\ldots,g^H_n)^*(\Phi^H X_1 \barsmash_B \ldots \barsmash_B \Phi^H X_n).
}
\end{equation}
The rigidity theorem (\cref{phi_h_rigidity}) applies equally well to this new functor since it is also a composition of external smash products, pullbacks, pushforwards, and $\Phi^H$. As in \cref{sec:GExEx_B}, the proofs of \cref{prop:geo_ssf,prop:geo_ssf_2,prop:forget_ssf} are the same, except that the products become fiber products over $B$. We conclude that at the point-set level, $\Phi^H$ and $(\iota_H)^*$ are strong shadow functors that commute with base change objects up to a vertical natural isomorphism.
 
To pass to the homotopy category, we observe that the $P$-fibrant spectra of \cref{p_fibrant} are preserved by $\barsmash_B$ because it is a composition of $\barsmash$ and a pullback. Therefore the proofs of \cref{prop:geo_ssf_derived,prop:geo_ssf_2_derived,prop:forget_ssf_derived} proceed as before, using  \cref{fib_source_of_confusion} to verify that $\odot_B$ preserves stable equivalences between our choices of \fibrant objects in each case. (Note that the extra step in the proof of \cref{ex_b_is_coherently_deformable} does not appear here, because the structures we are interested in do not use the product $\boxtimes = \barsmash_B$ directly.) We conclude:
\begin{thm}\label{final}
	The geometric fixed points functor and the forgetful functor induce strong shadow functors
	\[ \Phi^H\colon G\Ex_B \to WH\Ex_B, \qquad \iota_H^*\colon G\Ex_B \to H\Ex_B, \]
	that commute with base change objects up to vertical natural isomorphism,
	\[\Phi^H \circ [] \cong [] \circ (-)^H \quad \textup{ and } \quad (\iota_H)^* \circ [] \cong [] \circ (\iota_H)^*. \]
\end{thm}

\bibliographystyle{amsalpha2}
\bibliography{references}%

\end{document}